\newcommand{\thelanguage}{english}
\DeclareMathAlphabet{\mathpzc}{OT1}{pzc}{m}{it}
\newcommand{\pzc}[1]{\mathpzc{#1}}
\newcommand{\fr}[1]{\mathfrak{#1}}
\newcommand{\cal}[1]{\mathcal{#1}}
\newcommand{\vect}[1]{\mathbf{#1}}
\renewcommand{\d}{\mathrm{d}}
\DeclareMathOperator*{\mean}{\mathbb{E}}
\newcommand{\tuple}[1]{\pzc{#1}}
\newcommand{\Var}[1]{\mathcal{#1}}
\newcommand{\tensor}[1]{\pzc{#1}}
\newcommand{\sten}[3]{\vect{#1}_{#2}^{#3}}
\newcommand{\Tang}[2]{\mathrm{T}_{#1} {\, #2}}
\newcommand{\R}{\mathbb{R}}
\newcommand{\deriv}[2]{\d{}_{#2} #1}
\newcommand{\Sp}{\mathbb{S}}
\newcommand{\Id}{\mathrm{I}}
\newcommand{\Jac}[2]{\mathrm{Jac}\,(#1)(#2)}
\numberwithin{equation}{section}
\numberwithin{figure}{section}
\numberwithin{table}{section}
\theoremstyle{plain}
\newcounter{numbering} \numberwithin{numbering}{section}
\newtheorem{thm}[numbering]{Theorem}
\newtheorem{lemma}[numbering]{Lemma}
\newtheorem{prop}[numbering]{Proposition}
\newtheorem{cor}[numbering]{Corollary}
\theoremstyle{definition}
\newtheorem{assumption}{Assumption}
\newtheorem{conj}[numbering]{Conjecture}
\newtheorem{dfn}[numbering]{Definition}
\theoremstyle{remark}
\newtheorem{rem}[numbering]{Remark}
\crefname{equation}{}{}
\crefname{equation}{}{}
\crefname{figure}{Figure}{Figures}
\crefname{section}{Section}{Sections}
\crefname{table}{Table}{Tables}
\crefname{lemma}{Lemma}{Lemmata}
\crefname{appendix}{Appendix}{Appendix}
\crefname{prop}{Proposition}{Propositions}
\crefname{thm}{Theorem}{Theorems}
\crefname{cor}{Corollary}{Corollaries}
\crefname{dfn}{Definition}{Definitions}
\crefname{hyp}{Hypothesis}{Hypotheses}
\crefname{notation}{Notations}{Notations}
\crefname{rem}{Remark}{Remarks}
\crefname{claim}{Claim}{claims}
\crefname{assumption}{Assumption}{Assumptions}
\newcommand{\C}{\mathbb{C}}
\newcommand{\F}{\mathbb{F}}
\newcommand{\Kang}{\kappa_{\mathrm{ang}}}
\title[The average condition number is infinite in most cases]{The average condition number of most tensor rank decomposition problems is infinite}
\author{Carlos Beltr\'an}
\thanks{CB: Universidad de Cantabria, beltranc@unican.es. Supported by Spanish
	``Ministerio de Econom\'ia y Competitividad'' under projects MTM2017-83816-P
	and MTM2017-90682-REDT (Red ALAMA), as well as by the Banco Santander and Universidad de Cantabria under
	project 21.SI01.64658.}
\author{Paul Breiding}
\thanks{PB: Universit\"at Osnabr\"uck, pbreiding@uni-osnabrueck.de.}
\author{Nick Vannieuwenhoven}
\thanks{NV: KU Leuven, Department of Computer Science, nick.vannieuwenhoven@kuleuven.be. Supported by the Postdoctoral Fellowship of the Research Foundation--Flanders (FWO) with project numbers 12E8116N and 12E8119N}
\begin{document}

\maketitle

\begin{abstract}
	The tensor rank decomposition, or canonical polyadic decomposition, is the decomposition of a tensor into a sum of rank-1 tensors.
	The condition number of the tensor rank decomposition measures the sensitivity of the rank-1 summands with respect to structured perturbations. Those are perturbations preserving the rank of the tensor that is decomposed. On the other hand, the angular condition number measures the perturbations of the rank-1 summands up to scaling.
	
	We show for random rank-2 tensors that the expected value of the condition number is infinite for a wide range of choices of the density. Under a mild additional assumption, we show that the same is true for most higher ranks $r\geq 3$ as well. In fact, as the dimensions of the tensor tend to infinity, asymptotically all ranks are covered by our analysis. On the contrary, we show that rank-2 tensors have finite expected angular condition number. Based on numerical experiments, we conjecture that this could also be true for higher ranks.
	
	Our results underline the high computational complexity of computing tensor rank decompositions. We discuss consequences of our results for algorithm design and for testing algorithms computing tensor rank decompositions.
	\end{abstract}

	\section{Introduction}
	
	\subsection{The condition number of tensor rank decomposition}
	In this article, a \emph{tensor} is a multidimensional array filled with numbers:
		$$\tensor{A} := (a_{i_1,\ldots,i_d})_{1\leq i_1\leq n_1,\ldots,1\leq i_d\leq n_d} \in \R^{n_1 \times \cdots \times n_d}.$$
	The integer $d$ is called the \emph{order} of $\tensor{A}$. The tensor product of $d$ vectors $\vect{u}^{1} \in \R^{n_1},\ldots, \vect{u}^{d} \in \R^{n_d}$ is defined to be the tensor
	$\vect{u}^{1} \otimes \cdots \otimes \vect{u}^{d} \in \R^{n_1 \times \cdots \times n_d}$ with entries
	\[
	 (\vect{u}^{1}  \otimes \cdots \otimes \vect{u}^{d})_{i_1,\ldots,i_d} := u_{i_1}^{(1)} \cdots u_{i_d}^{(d)}, \text{ where } \vect{u}^{j} = [u_i^{(j)}]_{1\leq i\leq n_j}.
	\]
	Any nonzero multidimensional array obeying this relation is called a \emph{rank-1 tensor}. Not every multidimensional array represents a rank-$1$ tensor, but every tensor~$\tensor{A}$ is a finite linear combination of rank-$1$ tensors:
	\begin{align}\label{eqn_CPD0}
	 \tensor{A} =  \sum_{i=1}^r \tensor{A}_i, \text{ where } \tensor{A}_i = \sten{u}{i}{1} \otimes \cdots \otimes \sten{u}{i}{d} \text{ has rank one for each } 1\leq i\leq d.
	\end{align}
	Hitchcock \cite{hitchcock} coined the name \emph{polyadic decomposition} for the decomposition~\cref{eqn_CPD0}. The smallest number $r$ for which $\tensor{A}$ admits an expression as in \cref{eqn_CPD0} is called the (real) \emph{rank} of $\tensor{A}$. A corresponding minimal decomposition is called a \emph{canonical polyadic decomposition} (CPD).
	
	For instance, in algebraic statistics \cite{AMR2009,M1987}, chemical sciences \cite{SBG2004}, machine learning \cite{AGHKT2014}, psychometrics \cite{Kroonenberg2008}, signal processing \cite{Comon1994,CJ2010,Review2016}, or theoretical computer science \cite{BCS1997}, the input data has the structure of a tensor and the CPD of this tensor reveals the information of interest.
	Usually, this data is subject to measurement errors, which will cause the CPD computed from the measured data to differ from the CPD of the true data. In numerical analysis, the sensitivity of the model parameters, such as the rank-$1$ summands in the CPD, to perturbations of the data is often quantified by the \emph{condition number} \cite{Rice1966}.
	
	When there are multiple CPDs of a tensor $\tensor{A}$, the condition number must be defined at a decomposition $\{\tensor{A}_1, \ldots, \tensor{A}_r\}$. However, in this article, we will restrict our analysis to tensors $\tensor{A}$ having a \emph{unique} decomposition. Such tensors are called \emph{identifiable}. In this case, the condition number of the tensor rank decomposition of a tensor $\tensor{A}$ is well-defined, and we denote it by~$\kappa(\tensor{A})$. We will explain in \cref{sec:identifiablity} below in greater detail the notion of identifiablity of tensors. At this point, the reader should mainly bear in mind that the assumption of being identifiable is comparably weak as most tensors of low rank satisfy it. However, note that matrices ($d=2$) are never identifiable, so we assume that the order of the tensor is $d\geq 3$.

	The condition number of tensor rank decomposition was {characterized} in \cite{BV2017}, and it is the condition number of the following computational problem: On input $\tensor{A}\in \R^{n_1\times \cdots \times n_d}$ of rank $r$, compute the set of rank-1 terms $\{\tensor{A}_1, \ldots, \tensor{A}_r\}$ in the decomposition \cref{eqn_CPD0}. This condition number measures the sensitivity of the rank-$1$ terms
	with respect to perturbations of the tensor $\tensor{A}$.
	{In other words, when the condition number $\kappa(\tensor{A})$ of the rank-$r$ identifiable tensor $\tensor{A} = \sum_{i=1}^r \tensor{A}_i$ in~\cref{eqn_CPD0} is finite, it is the smallest value $\kappa(\tensor{A})$ such that
	\begin{align} \label{eqn_error_bound1}
	\min_{\pi \in \mathfrak{S}_r} \sqrt{ \sum_{i=1}^r \| \tensor{A}_i - \tensor{A}_{\pi_i}'  \|^2 } \le \kappa(\tensor{A}) \| \tensor{A} - \tensor{A}' \| + o( \| \tensor{A} - \tensor{A}' \| )
	\end{align}
	holds for all rank-$r$ tensors $\tensor{A}' = \sum_{i=1}^r \tensor{A}_i'$ (with $\tensor{A}_i'$ of rank $1$) sufficiently close to $\tensor{A}$. Herein, the norm on $\R^{n_1\times\cdots\times n_d}$ is the usual Euclidean norm, and $\mathfrak{S}_r$ is the permutation group on $\{1,\ldots,r\}$. It was shown in \cite[Corollary 5.5]{BV2021} that the same expression holds if $\tensor{A}'$ is \textit{any} tensor close to $\tensor{A}$ and $\sum_{i=1}^r \tensor{A}_i'$ is the best rank-$r$ approximation of $\tensor{A}'$ in the Euclidean norm.}
	
	As a general principle in numerical analysis, {the condition number} is an intrinsic property of the {computational} problem that governs the forward error and attainable precision of any method for {solving the problem}. Its study is also useful for other purposes. For example, in \cite{BV2018,BV2018b} the local rate of convergence of Riemannian Gauss--Newton optimization methods for computing the CPD was related to the condition number $\kappa(\tensor{A})$.
	
	{{A conventional wisdom} in numerical analysis is that it is harder to compute the condition number of a given problem instance than solving the problem itself \cite{Demmel,Demmel0}. This viewpoint led Smale to initiate the study of the probability distribution of condition numbers: If {the condition number is small with high probability}, then for many practical purposes one can assume that any given input is well-conditioned; at least the probability of failure necessarily will be small. Smale started studying the probability that a polynomial is ill-conditioned \cite{Smale1981}. This strategy was extended to linear algebra condition numbers \cite{ES2005,ChenDongarra,BuCu2010}, to systems of polynomial equations in diverse settings \cite{BezII,Rojas}, to linear systems of inequalities \cite{Conic}, to linear and convex programming \cite{SpielmanTeng,Dennis}, eigenvalue and eigenvectors in the classic and other settings \cite{ArmentanoCucker}, to polynomial eigenvalue problems \cite{Khazh,AB2019}, and to other computational models \cite{Pardo}, {among others}. As there is a substantive bibliography on this setting, we refer the reader to \cite{BC2013} for further details.}
	
	{Tensor rank decomposition seems to be no exception to this wisdom: The characterization of $\kappa(\tensor{A})$ for a given $\tensor{A}$ in \cite{BV2017} requires the CPD of $\tensor{A}$ itself. This forces us to rely on probabilistic studies to establish reasonable \emph{a priori} values of the condition number.} {Settling this is the main purpose of this paper.}

	\subsection{Informal version of our main results and discussion.}
	
	The first probabilistic analyses of the condition number of CPD were given in \cite{BBV1,BV2018c}. In those references the expected value was computed for random rank-$1$ tensors; that is, for \textit{random output} of the computational problem of computing CPDs. This amounts to choosing random $\sten{u}{i}{k}$ in the notation above, constructing the corresponding tensor $\tensor{A}$ and studying $\kappa(\tensor{A})$. The probabilistic study is feasible, in principle, because one can obtain a closed expression for $\kappa(\tensor{A})$ which is polynomial in terms of the $\sten{u}{i}{k}$, so that the question boils down to an explicit but nontrivial integration problem.
	
	This article is the first to investigate the condition number for \emph{random input}. That is, we assume that $\tensor{A}$ is chosen at random within the set of rank-$r$ tensors (see the definition of {random tensors} in {\cref{def_gaussian_tensor} and the extension in \cref{cor:RIT}}) and we wonder about the expected value of $\kappa(\tensor{A})$. The difficulty now is that, even if we assume that a decomposition \eqref{eqn_CPD0} exists, we do not have it and hence we lack a closed expression for $\kappa(\tensor{A})$.
	
	One may wonder if these two different random procedures should give similar distributions in this or other numerical problems. The answer is {\em no}. For example, say that our problem is to compute the kernel of a given matrix $A\in\mathbb{R}^{n\times(n+1)}$ and we want to study the expected value of the associated condition number $\|A\|\,\|A^\dagger\|$. Choosing $A$ at random produces $\mathbb{E}(\|A\|\,\|A^\dagger\|)<\infty$ but choosing the kernel at random and then $A$ at random within the matrices with that kernel is the same as computing the expected value of the usual Turing's condition number of a square real Gaussian matrix, which is infinity; see \cite{ChenDongarra} for precise estimations of these quantities. The situation is similar in the study of systems of homogeneous polynomial equations: random inputs have better condition number than inputs produced from random outputs; see for example \cite{FLH}. In both these examples, the condition number of input constructed from random output is, on average, larger than the condition number of random input. This is a stroke of luck since in general one expects instances from practical, real life problems, to be somehow random within the input space, not to have a random output!
	
	In this paper we show that computing the CPD is a {\em rara avis}: We prove in \cref{thm_main} and \cref{thm_main_for_higher_ranks} that (under suitable hypotheses) the condition number of random input tensors turns out to be infinity. On the contrary, by \cite{BBV1,BV2018c} it is presumed that the average condition number is finite when choosing random output. This result reinforces the evidence that computing CPDs is a very challenging computational problem.
	
	The literature often cites the result of H\r{a}stad \cite{Hastad1990} to underline the high computational complexity of computing CPDs. {H\r{a}stad showed that the NP-complete 3-satisfiability problem (also called 3-SAT) can be reduced to computing the rank of a tensor}; hence, solving the tensor rank decomposition problem is NP-hard in the Turing machine computational model. Our main result is different in two aspects: first, H\r{a}stad showed the difficulty of only one particular instance of a CPD, whereas we show that computing the CPD is difficult \emph{on average}. Second, our evidence supporting the hardness of the problem is not based on Turing machine complexity, but given by analyzing the condition number, which is more appropriate for numerical computations \cite{BCSS1998}. Linking complexity analyses to condition numbers is common in the literature; for instance, in the case of solving polynomial systems \cite{lairez,FLH,BC2011,SS94}. In general, the book \cite{BC2013} provides a good overview. In this interpretation, we show that \emph{computing CPDs numerically is hard} on average.
	
	On the other hand, in the literature, the main result of de Silva and Lim \cite{dSL2008} is often cited as a key reason why approximating a tensor by a low-rank CPD is such a challenging problem: for some input tensors, a \emph{best} low-rank approximation may not exist! This is because the set of tensors of bounded rank is not closed: There are tensors of rank strictly greater than $r$ that can be approximated arbitrarily well by rank-$r$ tensors. It is shown in \cite{dSL2008} that this \emph{ill-posedness} of the approximation problem is not rare in the sense that for every tensor space $\R^{n_1 \times n_2 \times n_3}$ there exists an open set of input tensors which do not admit a best rank-$2$ approximation. This result is stronger than H\r{a}stad's in the sense that it proves that instances with no solution to the tensor rank approximation problem may occur on an open set, rather than in one particular set of measure zero. Notwithstanding this key result, it does not tell us about the complexity of solving the tensor rank \textit{decomposition} problem, in which we are given a rank-$r$ tensor whose CPD we seek. In this setting, there are no ill-posed inputs in the sense of \cite{dSL2008}. It was already shown in \cite{BV2017} that the condition number diverges as one moves towards the open part of the boundary of tensors of bounded rank, entailing that there exist regions with arbitrarily high condition number. One of the main result of this paper, \cref{thm_main_for_higher_ranks}, shows that such regions cannot be ignored: They are sufficiently large to cause the integral of the condition number over the set of rank-$r$ tensors to diverge. In other words, one cannot neglect the regions where the condition number is so high that a CPD computed from a floating-point representation of a rank-$r$ tensor in $\R^{n_1 \times \cdots \times n_d}$, subject only to roundoff errors, is meaningless---a result similar in spirit to de Silva and Lim \cite{dSL2008}.
	
	
	One may conclude from the above that, at least from the point of view of average stability of the problem, tensor rank decomposition is doomed to fail. However, if one only cares about the \emph{directions} of the rank-$1$ terms in the decomposition, {then the situation changes dramatically. The condition number associated with the computational problem ``Given a rank-$r$ identifiable tensor $\tensor{A}=\sum_{i=1}^r \tensor{A}_i$ as in \cref{eqn_CPD0}, output the set of normalized rank-$1$ tensors $\{ \frac{\tensor{A}_i}{\|\tensor{A}_1\|}, \ldots,  \frac{\tensor{A}_r}{\|\tensor{A}_r\|}\}$'' will be called the \textit{angular condition number} $\Kang(\tensor{A})$. Analogously to the bound \cref{eqn_error_bound1}, one can show that when $\Kang$ is finite, it is the smallest number such that
	\[
	\min_{\pi \in \mathfrak{S}_r} \sqrt{ \sum_{i=1}^r \left\| \frac{\tensor{A}_i}{\|\tensor{A}_i\|} - \frac{\tensor{A}_{\pi_i}'}{\|\tensor{A}_{\pi_i}'\|} \right\|^2 } \le \Kang(\tensor{A}) \| \tensor{A} - \tensor{A}' \| + o( \| \tensor{A} - \tensor{A}' \| )
	\]
	for all rank-$r$ tensors $\tensor{A}'=\sum_{i=1}^r \tensor{A}_i'$ (with $\tensor{A}_i'$ rank-$1$ tensors) in a sufficiently small open neighborhood of $\tensor{A}$. By \cite[Corollary 5.5]{BV2021} the same expression holds for \textit{all} tensors $\tensor{A}'$ in a small open neighborhood of $\tensor{A}$ if $\sum_{i=1}^r \tensor{A}_i'$ is the best rank-$r$ approximation of $\tensor{A}'$.}
	
	We will prove in \cref{thm_main2} that at least in the case of rank-$2$ tensors, {the angular condition number $\Kang$ for random inputs is finite, contrary to the classic condition number $\kappa$}; in fact, the numerical experiments in \cref{sec:experiments} suggest that this finite average condition seems to extend to much higher ranks as well. In other words, on average we may expect to be able to recover the \emph{angular part} of the CPD:
	\[
	 \tensor{U}_i = \frac{\tensor{A}_i}{\|\tensor{A}_i\|}, \quad\text{for } i = 1, \ldots, r,
	\]
	where $\tensor{A}_i$ is as in \cref{eqn_CPD0}.
	One could conclude from this that a tensor decomposition algorithm should aim to produce the normalized rank-$1$ terms $\tensor{U}_i$ from the tensor rank decomposition
	\[
	 \tensor{A} = \sum_{i=1}^r \lambda_i \tensor{U}_i
	\]
	accurately. Once these terms are obtained, one can recover the $\lambda_i$'s by solving a linear system of equations. Since, as a general principle, the condition number of a composite smooth map $g \circ f$ between manifolds satisfies \cite{BCSS1998,BC2013}
	\[
	 \kappa[g \circ f](x) := \| (\deriv{g}{f(x)}) (\deriv{f}{x}) \| \le \| \deriv{g}{f(x)} \| \| \deriv{f}{x} \| = \kappa[g](f(x)) \, \kappa[f](x),
	\]
	it follows that the condition number of tensor decomposition is bounded by the product of the condition numbers of the problem of finding the angular part of the CPD and the condition number of solving a linear least-squares problem. Our main results suggest that precisely the last problem will on average be ill-conditioned.
	
	The foregoing observation can have major implications for algorithm design. Indeed, solving the tensor rank decomposition problem by first solving for the angular part and then the linear least-squares problem decomposes the problem into a nonlinear and a linear part. Crucially, the latter least-squares problem can be solved by \emph{direct} methods, such as a QR-factorization combined with a linear system solver. Such methods have a uniform computational cost regardless of the condition number of the problem. By contrast, since no ({provably}) numerically stable direct algorithms for tensor rank decomposition are {currently} known \cite{BBV1}, iterative methods are indispensable for this problem. We may expect their computational performance to depend on the condition number of the problem instance. Indeed, our main results combined with the main result of \cite{BV2018} imply, for example, that Riemannian Gauss--Newton optimization methods for solving the angular part of the CPD should, on average, require less iterations to reach convergence than Riemannian Gauss--Newton methods for solving the tensor decomposition problem directly (such as the methods in \cite{BV2018,BV2018b}), because the angular condition number $\Kang$ appears to be finite on average, while the regular condition number $\kappa$ is proved to be $\infty$ on average in most cases, as we show in this article.
	
	Our main results also have consequences for researchers testing numerical algorithms for computing the CPD. In the literature, a common way of generating input data for testing algorithms is to sample the rank-$1$ terms $\tensor{A}_i = \lambda_i\sten{u}{i}{1} \otimes \sten{u}{i}{2} \otimes \cdots \otimes \sten{u}{i}{d}$ {randomly}, and then apply the algorithm to the associated tensor $\tensor{A} = \sum_{i=1}^r \tensor{A}_i$. However, our analysis in this paper and the analyses in \cite{BBV1,BV2018c} show that this procedure generates tensors that are heavily biased towards being numerically well-conditioned. Hence, this way of testing algorithms probably does not correspond to a realistic distribution on the inputs. We acknowledge that it is currently not easy to sample rank-$r$ tensors uniformly even though some methods exist \cite{BM2019}. In part, this is because equations for the algebraic variety containing the tensors of rank bounded by $r$ are hard to obtain \cite{Landsberg2012}. Nevertheless, in \cref{sec:experiments}, {using the observation from \cref{remark_conditional_distr}}, we present an acceptance-rejection method that can be applied to a few cases and yields uniformly distributed rank-$r$ tensors, relative to the {Gaussian density in \cref{def_gaussian_tensor}}. In any case we strongly advocate that the (range of) condition numbers are reported when testing the performance of iterative methods for solving the tensor rank decomposition problem, so that one can assess the difficulty of the problem instances. We believe it is always recommended to include models that are known to lead to instances with high condition numbers, such as those used in \cite{BV2017,BV2018b}.
	
	The formal presentation of our main results requires some extra notation that we introduce in subsequent sections.
	
	\subsection{Identifiable tensors and a formula for the condition number}\label{sec:identifiablity}
	A particular feature of higher-order tensors that distinguishes them from matrices is \emph{identifiability}. This means that in many cases the CPD of tensors of order $d\geq 3$ of small rank is unique. A tensor $\tensor{A} \in \R^{n_1 \times \cdots \times n_d}$ is called \emph{$r$-identifiable} if there is a unique set $\{ \tensor{A}_1, \ldots, \tensor{A}_r \}$ of cardinality $r$ such that $\tensor{A} = \tensor{A}_1 + \cdots + \tensor{A}_r$ and all $\tensor{A}_i$'s are rank-$1$ tensors.
	A celebrated criterion by Kruskal \cite{Kruskal1977} gives a tool to decide if a given tensor of order 3 satisfies this property.
	
	\begin{lemma}[Kruskal's criterion \cite{Kruskal1977,SGB2000}] \label{kruskal}
	Let $\F$ be $\R$ or $\C$, $\tensor{A}\in \F^{n_1\times n_2\times n_3}$ a tensor of order $3$ and assume that
	\(
	\tensor{A} = \sum_{i=1}^r \tensor{A}_i,
	\)
	where $\tensor{A}_i = \lambda_i \sten{u}{i}{1} \otimes \sten{u}{i}{2}\otimes\sten{u}{i}{3} \in \F^{n_1 \times n_2 \times n_3}.$
	Define the factor matrices $U_\ell = [\sten{u}{i}{\ell}]_{1\leq i \leq r} \in \F^{n_\ell \times r}$ for $\ell=1,2,3$, and let $k_\ell$ be the largest integer $k$ such that every subset of $k$ columns of $U_\ell$ has rank equal to $k$. If
	\(
	 r \le \frac{1}{2}( k_1 + k_2 + k_3 - 2)
	\)
	and $k_1, k_2, k_3 > 1$, then the tensor $\tensor{A}$ is $r$-identifiable over $\F$.
	\end{lemma}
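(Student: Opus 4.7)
The plan is to prove uniqueness by assuming a second decomposition
\[
\tensor{A} = \sum_{j=1}^{s} \sten{v}{j}{1} \otimes \sten{v}{j}{2}\otimes \sten{v}{j}{3},
\]
with $s\le r$ (scalars having been absorbed into one of the factors) and showing that as multisets of rank-one summands the two decompositions must coincide. Collect the second factors into matrices $V_\ell = [\sten{v}{j}{\ell}]_{1\le j\le s} \in \F^{n_\ell\times s}$. The first step is to slice against an arbitrary $\vec{x}\in\F^{n_3}$, which yields the matrix identity
\[
M(\vec{x}) := \sum_{k=1}^{n_3} x_k \tensor{A}_{\cdot,\cdot,k} = U_1\, \diag(U_3^\top \vec{x})\,\diag(\lambda_1,\dots,\lambda_r)\, U_2^\top = V_1\, \diag(V_3^\top \vec{x})\, V_2^\top.
\]
The rank of $M(\vec{x})$ can be controlled from above through the Kruskal ranks $k_1,k_2$ of the given decomposition via the number of nonzero entries of $U_3^\top\vec{x}$, and simultaneously from below through the nonzero pattern of $V_3^\top\vec{x}$. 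Two analogous identities arise by slicing in modes one and two.

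The heart of the proof is \emph{Kruskal's Permutation Lemma}: given $U\in\F^{n\times r}$ with Kruskal rank $k_U$ and $\tilde U\in\F^{n\times s}$ with $s\le r$, if for every $\vec{y}$ in an appropriate range one has
\[
\#\set{\text{nonzero entries of }\tilde U^\top \vec{y}} \ge r - \rank(U^\top \vec{y}) + 1,
\]
then $s=r$ and there exist a permutation matrix $\Pi$ and a nonsingular diagonal $D$ with $\tilde U = U\Pi D$. I would prove this by induction on $r$, exploiting the fact that a minimum-support vector in the column span of $U$ has exactly $r-k_U+1$ nonzero coordinates, and transferring this extremal support structure to $\tilde U$.

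With the lemma in hand, I would apply it three times, once per factor matrix, by reading off its hypothesis from the matrix rank equality
\[
\rank V_1\,\diag(V_3^\top \vec{x})\, V_2^\top = \rank U_1\,\diag(U_3^\top \vec{x})\,\diag(\lambda_1,\dots,\lambda_r)\, U_2^\top,
\]
which forces support inequalities on $V_3^\top \vec{x}$ once $k_1$ and $k_2$ are taken into account. The Kruskal bound $2r\le k_1+k_2+k_3-2$ is precisely what makes the required support inequalities hold simultaneously in all three modes; the assumption $k_\ell>1$ rules out degenerate rank-one slices. Chaining the conclusions yields a common permutation $\pi$ and invertible diagonals $D_\ell$ with $\sten{v}{\pi(i)}{\ell} = (D_\ell)_{ii}\sten{u}{i}{\ell}$, whence the rank-one summands on both sides of \cref{eqn_CPD0} must match coefficient by coefficient and $(D_1)_{ii}(D_2)_{ii}(D_3)_{ii} = \lambda_i$, so the two multisets of rank-one tensors coincide.

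The main obstacle is the Permutation Lemma itself, which is genuinely combinatorial: one must argue about minimum supports of vectors in the column span of $U$ and show that these are preserved under any map satisfying the weight inequality, without any a priori information on the column structure of $\tilde U$. The bookkeeping between support sizes, ranks, and Kruskal ranks across the three modes is delicate and is where Kruskal's inequality enters sharply; the condition $d\ge 3$ is essential, because for $d=2$ the lemma fails outright, as matrix rank decompositions admit continuous families of equivalent expressions.
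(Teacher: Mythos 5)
The paper does not prove Kruskal's criterion; it is stated as a lemma and immediately attributed to \cite{Kruskal1977,SGB2000}, then used as a black box. There is therefore no in-paper proof to compare your argument against. Judged on its own terms, your sketch does follow the standard route, namely the one in Kruskal's original paper and its later streamlinings by Stegeman--Sidiropoulos: slice the tensor along each mode, control the ranks of the slices through the $k$-ranks, and invoke the Permutation Lemma mode-by-mode to obtain a common permutation and diagonal scalings.

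As a proof, however, the sketch has a genuine gap and a couple of inaccuracies. The gap is that the Permutation Lemma, which you correctly single out as the heart of the matter, is left entirely unproved; it carries essentially all of the combinatorial content of Kruskal's theorem, so without it the outline is just the standard reduction restated. The inaccuracies concern the two auxiliary facts you cite around it. The stated hypothesis ``$\#\{\text{nonzero entries of }\tilde U^\top\vec y\} \ge r - \rank(U^\top\vec y) + 1$'' cannot be right as written: $U^\top\vec y$ is a single vector, so $\rank(U^\top\vec y)\in\{0,1\}$, and the inequality would require $\tilde U^\top\vec y$ to have essentially no zero entries, which is much too strong and is not what the slice-rank identity delivers. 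The usual form of the hypothesis is the reverse comparison, of the type ``whenever $U_3^\top\vec x$ has at most $r-k_3+1$ nonzeros, $V_3^\top\vec x$ has no more nonzeros than $U_3^\top\vec x$,'' and it is precisely the bound $2r \le k_1+k_2+k_3-2$ together with the rank inequality $\rank M(\vec x)\ge k_1 + k_2 + \omega(U_3^\top\vec x) - 2r$ that produces it. Likewise, ``a minimum-support vector in the column span of $U$ has exactly $r-k_U+1$ nonzero coordinates'' conflates spaces: the column span of $U$ sits in $\F^{n}$, while the quantity $r-k_U+1$ counts coordinates in $\F^{r}$; what Kruskal-rank constraints actually control is the support of vectors in $\ker U\subset\F^{r}$ (any nonzero kernel vector has at least $k_U+1$ nonzeros) and the nonzero pattern of $U^\top\vec y$ in the row space. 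Getting these base facts precisely right is not cosmetic, because the induction in the Permutation Lemma proceeds by selecting an extremal-support vector and showing its support structure transfers to $\tilde U$; a misstated base case would derail that induction.
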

	Since matrix rank does not change with a field extension from $\R$ to $\C$, a real rank-$r$ tensor $\tensor{A} \in \R^{n_1 \times n_2 \times n_3}$ that satisfies the assumptions of \cref{kruskal} is $r$-identifiable over $\R$ and also automatically $r$-identifiable over $\C$. In other words, Kruskal's criterion is certifying \emph{complex $r$-identifiability} of tensors, which is a strictly stronger notion than $r$-identifiability over $\R$ \cite{ABC2017}.

	Most order 3 tensors of low-rank satisfy Kruskal's criterion \cite{COV2017}: There is an open dense subset of the set of rank-$r$ tensors in $\R^{n_1 \times n_2 \times n_3}$, $n_1 \ge n_2 \ge n_3 \ge 2$, where complex $r$-identifiability holds, provided $r \le n_1 + \min\{ \tfrac{1}{2} \delta, \delta\}$ with $\delta := n_2 + n_3 - n_1 - 2$. In fact, this phenomenon occurs much more generally than third-order tensors of very small rank.
	Let us denote the set of complex tensors of complex rank bounded by $r$ by
	\[
	 \sigma_{r;n_1,\ldots,n_d}^\C := \{ \tensor{A} \in \C^{n_1 \times \cdots \times n_d} \mid \mathrm{rank}_\C(\tensor{A}) \le r \}.
	\]
	This constructible\footnote{The elements of $\sigma_{r;n_1,\ldots,n_d}^\C$ can be parameterized as in \cref{eqn_CPD0} changing $\R$ to $\C$.} set turns out to be an open dense subset (in the Euclidean topology) of its Zariski closure $\overline{\sigma_{r;n_1,\ldots,n_d}^\C}$; see \cite{Landsberg2012}. One says that $\sigma_{r;n_1,\ldots,n_d}^\C$ is \emph{generically complex $r$-identifiable}
	if the subset of points of $\sigma_{r;n_1,\ldots,n_d}^\C$ that are not complex $r$-identifiable is {contained in} a proper closed subset in the Zariski topology on the algebraic variety $\overline{\sigma_{r;n_1,\ldots,n_d}^\C}$; see \cite{CO2012}.
	It is known from dimensionality arguments \cite{CO2012} that there is a maximum value of $r$ for which generic $r$-identifiability of $\sigma_{r;n_1,\ldots,n_d}$ can hold, namely
	\begin{align} \label{eqn_r_bound}
	r \le r^{\text{crit}}_{n_1,\ldots,n_d}, \quad \text{ where }\quad r^{\text{crit}}_{n_1,\ldots,n_d}:=\frac{n_1 \cdots n_d}{1 + \sum_{k=1}^d (n_k - 1)}.
	\end{align}
	In fact, it is conjectured that the inequality is strict in general; see \cite{HOOS2016} for details.
	For all other values of $r$, generic $r$-identifiability does not hold.
	In \cite{BCO2014,CO2012,COV2014,DDL2015} it is proved that in the majority of choices for $n_1, \ldots, n_d$, generic complex $r$-identifiability holds for most ranks with~$r < r_\text{crit}$; see \cite[Theorem 7.2]{BCO2014} for a result that is asymptotically optimal. For a summary of the conjecturally complete picture of complex $r$-identifiability results, see \cite[Section 3]{COV2017}.

	\begin{assumption}\label{assumpt1}
	In the rest of this article, we will assume that $\sigma_{r;n_1,\ldots,n_r}^\C$ is generically complex $r$-identifiable.
	\end{assumption}
	
	The reason why we make this assumption is because it greatly simplifies some of the arguments. At the same time, \cref{assumpt1} is (conjectured to be) extremely weak and only limits the generality in the exceptional cases listed in \cite[Theorem 1.1]{COV2014}, and even then generic $r$-identifiability only fails very close to the upper bound $r_\text{crit}$ of the permitted ranks.
	
	An immediate benefit of \cref{assumpt1} is that it allows for a nice expression of the condition number of the tensor rank decomposition problem.
	 Let us denote the set of rank-1 tensors in~$\R^{n_1\times \cdots\times n_d}$ by
	\[
	\Var{S}_{n_1,\ldots,n_d} = \{\vect{a}^{1} \otimes \cdots \otimes \vect{a}^{d} \mid \vect{a}^{k}\in\R^{n_k}\backslash \{0\}\}.
	\]
	It is a smooth manifold, called the \emph{Segre manifold} \cite{Harris1992,Landsberg2012}. The set of tensors of rank bounded by $r$ is the image of the addition map: $\sigma_{r;n_1,\ldots,n_d} = \Phi(\Var S_{n_1,\ldots,n_d}^{\times r})$, where
	\begin{equation}\label{def_Phi}
	\Phi: \Var{S}_{n_1,\ldots,n_d} \times \cdots \times \Var{S}_{n_1,\ldots,n_d} \to \R^{n_1\times \cdots \times n_d},\; (\tensor{A}_1,\ldots,\tensor{A}_r) \mapsto \tensor{A}_1+\cdots+\tensor{A}_r.
	\end{equation}
	Then, under \cref{assumpt1}, there exists an open dense subset $\Var{N}_{r;n_1,\ldots,n_d}$ of $\sigma_{r;n_1,\ldots,n_d}$ such that for all $\tensor{A} \in \Var{N}_{r;n_1,\ldots,n_d}$ we have $\vert \Phi^{-1}(\tensor{A})\vert = r!$ by \cite[Proposition 4.5--4.7]{BBV1}.\footnote{The preimage of an $r$-identifiable tensor under the map $\Phi$ consists of the $r!$ permutations of the summands.}
	In particular, the points in the fiber are isolated, so there is a local inverse map $\Phi^{-1}_{\tuple{a}}$ of $\Phi$ for each $\tuple{a} \in \Phi^{-1}(\tensor{A})$. {Recall from \cite{BV2017} that} the condition number of the CPD at $\tensor{A}\in \Var{N}_{r;n_1,\ldots,n_d}$ is then the condition number {(in the classic sense of Rice \cite{Rice1966}; see also \cite{numerical_linear_algebra,BC2013})}
	of any of these local inverses:
	\begin{multline}\label{def_kappa}
	\kappa(\tensor{A}) := \lim_{\epsilon \to 0} \;\;\sup_{\Vert \Delta\tensor{A}\Vert <\epsilon \text{ s.t.\ }  \tensor{A}+\Delta\tensor{A}\in\sigma_{r;n_1,\ldots,n_d} } \; \frac{\Vert \Phi^{-1}_{\tuple{a}}(\tensor{A}) - \Phi^{-1}_{\tuple{a}}(\tensor{A}+\Delta\tensor{A}) \Vert}{\Vert \Delta\tensor{A}\Vert}\\{=\|\deriv{\Phi_a^{-1}}{\tensor A}\|_2},
	\end{multline}
	where $\tuple{a} \in \Phi^{-1}(\tensor{A})$ is arbitrary; it is a corollary of \cite[Theorem 1.1]{BV2017} that the above definition does not depend on the choice of $\tuple{a}$.
	Herein, $\Vert\cdot\Vert$ in the denominator is the Euclidean norm induced by the ambient $\R^{n_1\times \cdots\times n_d}$, and the norm in the numerator is the product norm of the Euclidean norms inherited from the ambient $\R^{n_1\times \cdots\times n_d}$'s.
	{The right-hand side $\|\deriv{\Phi_a^{-1}}{\tensor A}\|_2$ is the spectral norm of the derivative of $\Phi_a^{-1}$ at $\tensor{A}$. See \cref{sec_preliminaries} for more details.
	By \cite[Proposition 4.4]{BV2017}, the condition number $\kappa(\tensor{A})$ does not depend on the norm of $\tensor{A}$: $\kappa(t \tensor{A})=\kappa(\tensor{A})$ for $t\in\R\setminus\{0\}$.}
	
	\begin{rem} \label{rem_nodefinition}
	We did not specify the value of the condition number for $\tensor{A} \in \sigma_{r;n_1,\ldots,n_d}\setminus\Var{N}_{r;n_1,\ldots,n_d}$. The main reason is that our analysis is independent of the values that the condition number takes on this set of measure zero, so that for simplicity we decided against including the more complicated general case where there can be several distinct elements in the preimage.
	\end{rem}
	
	\subsection{Main results}\label{sec:mainresults}
	The goal of this paper is to study the average condition number {relative to ``reasonable'' density functions. By this we mean probability distributions $\hat{\rho}$ that are \textit{comparable} to the standard Gaussian density $\rho$: There exist positive constants $c_1,c_2$ such that $c_1 \le \frac{\hat\rho}{\rho} \le c_2$. The main result, \cref{cor:RIT}, applies, among others, for all distributions $\hat\rho$ comparable to the following Gaussian density defined on the set of bounded rank tensors $\sigma_{r;n_1,\ldots,n_d}$.}
	
	\begin{dfn}[Gaussian Identifiable Tensors]\label{def_gaussian_tensor}
	We define a random variable $\tensor{A}$ on $\sigma_{r;n_1,\ldots,n_d}$ by specifying its density as
	\[
	\rho(\tensor{A}):= (C_{r;n_1,\ldots,n_d})^{-1} \, e^{-\frac{\|\tensor{A}\|^2}{2}},\quad
	\text{where }\; C_{r;n_1,\ldots,n_d} = \int_{\sigma_{r;n_1,\ldots,n_d}}e^{-\frac{\|\tensor{A}\|^2}{2}}\, \d{}\tensor{A}
	\]
	is the normalization constant. Under \cref{assumpt1}, if $\tensor{A}\in\sigma_{r;n_1,\ldots,n_d}$ and  $\tensor{A}\sim \rho$, we say that $\tensor{A}$ is a Gaussian Identifiable Tensor~(GIT) of rank $r$.
	\end{dfn}
	
	\begin{rem}\label{remark_conditional_distr}
	{Suppose that $r$ is a \emph{typical rank} of tensors in $\mathbb R^{n_1\times \cdots\times n_d}$. This means that $\sigma_{r;n_1,\ldots,n_d}$ contains a Euclidean open subset of $\mathbb R^{n_1\times \cdots\times n_d}$ and is {of maximum dimension $n_1\cdots n_d$}. Then, the distribution defined in \cref{def_gaussian_tensor} is a \textit{conditional} probability distribution: A GIT $\tensor{A}$ of rank $r$ has the distribution $\tensor{A}\sim (\tensor{B} \mid \mathrm{rank}(\tensor{B}) = r)$, where $\tensor{B}$ is a tensor with independent and identically distributed (i.i.d.) standard Gaussian entries. We exploit this fact in our numerical experiments to sample GITs using an acceptance-rejection method.}
	\end{rem}
	
	{We first state our results for the foregoing Gaussian density. At the end of this subsection, in \cref{cor:RIT}, we generalize these results to other densities, including all densities comparable to the Gaussian density.}
	Our first contribution is the following result. We prove it in \cref{sec:proof_main}.
	
	\begin{thm}\label{thm_main}
	Let $\tensor{A}\in \sigma_{2;n_1,\ldots,n_d}$ be a GIT of rank $r= 2$. Then, $\mean \kappa(\tensor{A}) = \infty.$
	\end{thm}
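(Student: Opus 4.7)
The plan is to change variables from $\sigma_{2;n_1,\ldots,n_d}$ back to the source $\Var{S}\times\Var{S}$ of $\Phi$ via the area formula, then exhibit an unbounded cancellation tube on which the integrand (which in fact vanishes as the two Segre tangent spaces become parallel) is exactly compensated by the Jacobian of the cone scaling.

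Under \cref{assumpt1}, \cite[Theorem~1.1]{BV2017} gives $\kappa(\tensor{A}) = 1/\sigma_{\min}(J_\Phi(\tuple{a}))$ at any preimage $\tuple{a}\in\Phi^{-1}(\tensor{A})$, and $\Phi$ is a generically $2$-to-$1$ local diffeomorphism between manifolds of equal dimension $2m$ with $m:=\dim\Var{S}_{n_1,\ldots,n_d}$. Using $|\det J_\Phi|=\prod_{i=1}^{2m}\sigma_i$, the area formula gives
\[
 2\,C_{2;n_1,\ldots,n_d}\cdot\mean\kappa \;=\; \int_{\Var{S}\times\Var{S}}\prod_{i=1}^{2m-1}\sigma_i\bigl(J_\Phi(\tuple{a})\bigr)\,e^{-\|\Phi(\tuple{a})\|^2/2}\,\d\tuple{a}.
\]
A key feature is that the Segre is a cone: the subspace $T_{t\tensor{U}}\Var{S}\subset\R^{n_1\cdots n_d}$ does not depend on the scaling $t\neq 0$. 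Writing $J_\Phi^\top J_\Phi$ in orthonormal bases of the tangent spaces at $\tensor{A}_1$ and $\tensor{A}_2$ inherited from the ambient inner product, it has the block form $\left(\begin{smallmatrix}I&C\\C^\top&I\end{smallmatrix}\right)$, where $C$ is the $m\times m$ matrix of cosines of principal angles $0\leq\theta_1\leq\cdots\leq\theta_m$ between the two tangent subspaces, giving singular values $\sqrt{1\pm\cos\theta_i}$. Hence $\sigma_{\min}=\sqrt{2}\sin(\theta_1/2)$ and
\[
 \prod_{i=1}^{2m-1}\sigma_i \;=\; \sqrt{2}\,\cos(\theta_1/2)\prod_{i=2}^m\sin\theta_i;
\]
in particular, this integrand depends on $\tuple{a}$ only through the unit representatives $\tensor{U}_j := \tensor{A}_j/\|\tensor{A}_j\|$.

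I would then specialize to a cancellation tube parametrized by $t\geq 1$, $s\in[-1,1]$, a unit rank-one tensor $\tensor{U}$, and a bounded tangent vector $\tensor{V}$ to the unit Segre at $\tensor{U}$, via $\tensor{A}_1 = t\,\tensor{U}$ and $\tensor{A}_2 = -(t-s)\exp_{\tensor{U}}(\tensor{V}/t)$. A Taylor expansion yields $\Phi(\tuple{a})=s\tensor{U}-\tensor{V}+O(1/t)$, uniformly bounded on the tube, so $e^{-\|\Phi\|^2/2}$ stays bounded below. The principal angles between $T_{\tensor{U}}\Var{S}$ and $T_{\exp_{\tensor{U}}(\tensor{V}/t)}\Var{S}$ satisfy $\theta_i = c_i(\tensor{V})/t+O(1/t^2)$ with $c_i(\tensor{V})>0$ on a positive-measure subset of tangent vectors $\tensor{V}$, so $\prod_{i=2}^m\sin\theta_i\asymp t^{-(m-1)}$ on that set. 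Moreover, the volume form on the tube is $\asymp t^{m-1}\,\d t\,\d s\,\d\tensor{U}\,\d\tensor{V}$: the two cone factors $|t_1|^{m-1}|t_2|^{m-1}\sim t^{2(m-1)}$ are offset by the $t^{-(m-1)}$ Jacobian of parametrizing a $O(1/t)$-neighborhood of $\tensor{U}$ on the unit Segre via bounded $\tensor{V}$.

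Combining, the integrand times the volume form is $\Theta(1)$ uniformly on the tube, and integrating the remaining $\d t$ over $[1,\infty)$ diverges, whence $\mean\kappa=\infty$. The main obstacle will be to control the $O(1/t)$ corrections in both the Taylor expansion of $\Phi$ and in the principal-angle expansions rigorously; in particular, one must establish that the Grassmannian derivative $\tensor{V}\mapsto\frac{d}{d\epsilon}T_{\exp_{\tensor{U}}(\epsilon\tensor{V})}\Var{S}\big|_{\epsilon=0}$ is generically injective in $\tensor{V}$, so that the coefficients $c_i(\tensor{V})$ are bounded below on a set of positive measure for every choice of base point $\tensor{U}$.
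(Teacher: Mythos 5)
Your proposal is correct, and it takes a genuinely different route from the paper. After the identical coarea setup over $\Var{S}\times\Var{S}$, the paper continues by explicitly parametrizing $\Var{S}$ via $(0,\infty)\times\Var{P}$, writes out Terracini's matrix $U = [\tensor{U}\ L_1\ \tensor{V}\ L_2]$ in coordinates, integrates the scales $\lambda,\mu$ exactly (Lemma~\ref{lem:innerBV}), lower-bounds the resulting $\theta$-integral (Lemma~\ref{lem:integralbound}), and then establishes the essential estimate $q(U)\gtrsim\|\vect{u}^1-\vect{v}^1\|^{\Sigma-1}$ by a long Gram-matrix computation (Lemma~\ref{sec3:auxiliary_lemma}) on an explicit neighborhood $D(\epsilon)$ of the diagonal. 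You instead reason intrinsically: you express $q(J_\Phi)$ through the principal angles between the two Segre tangent spaces, observe via the cone property that it depends only on the unit directions, and reduce the whole estimate to the rank of the second-fundamental-form map $II(\tensor{V},\cdot)$ of the Segre; the divergence then falls out of a clean scaling count on a tube parametrized by $(t,s,\tensor{U},\tensor{V})$, in which the two cone-volume factors cancel both the $t^{-(\Sigma-1)}$ decay of $\prod_{i\ge2}\sin\theta_i$ and the $t^{-(\Sigma-1)}$ Jacobian of $\tensor{V}\mapsto\exp_{\tensor{U}}(\tensor{V}/t)$. Your route makes the geometric origin of the exponent $\Sigma-1$ transparent, where the paper recovers it by a Ger\v{s}gorin-style eigenvalue computation; the paper's route, conversely, produces explicit uniform constants without any Taylor-expansion bookkeeping.

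One comment on the obstacle you flag. You phrase it as ``the Grassmannian derivative $\tensor{V}\mapsto\deriv{G}{\tensor{U}}(\tensor{V})$ is generically injective,'' but that is weaker than what your estimate $\prod_{i=2}^{m}\sin\theta_i\asymp t^{-(m-1)}$ actually requires: you need, for $\tensor{V}$ in a positive-measure set, that the linear map $II(\tensor{V},\cdot)\colon\Tang{\tensor{U}}{\Var{S}}\to(\Tang{\tensor{U}}{\Var{S}})^\perp$ have rank exactly $\Sigma-1$ (kernel equal to the radial line $\R\tensor{U}$), so that $m-1$ of the $m$ principal angles actually close at rate $1/t$. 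Fortunately this is true precisely when $d\ge3$: writing $\tensor{V}=\sum_k\vect{u}^1\otimes\cdots\otimes\vect{w}^k\otimes\cdots\otimes\vect{u}^d$ with all $\vect{w}^k\neq0$, the condition $II(\tensor{V},\tensor{W})=0$ for $\tensor{W}=c\tensor{U}+\sum_k\vect{u}^1\otimes\cdots\otimes\vect{y}^k\otimes\cdots\otimes\vect{u}^d$ forces $\vect{w}^j\otimes\vect{y}^k+\vect{y}^j\otimes\vect{w}^k=0$ for all $j<k$, hence $\vect{y}^j=\beta_j\vect{w}^j$ with $\beta_j+\beta_k=0$; for $d\geq 3$ this system is only solved by $\beta_j\equiv 0$, so the kernel is exactly $\R\tensor{U}$. (For $d=2$ the kernel is two-dimensional, so the rank drops to $\Sigma-2$ — a welcome consistency check, since the theorem requires $d\geq 3$.) With this corrected statement in hand, your plan goes through.
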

	
	It should be mentioned that in our analysis we consider a small subset of $\sigma_{2;n_1,\ldots,n_d}$ and show that on this subset the condition number integrates to infinity. In particular, a weak average-case analysis as proposed in \cite{AL2017} would be of interest in this problem.
	
	Under one additional assumption we can extend the result from \cref{thm_main} to higher ranks. We prove the following theorem in \cref{sec:proof_main_higher_ranks}.
	
	\begin{thm}\label{thm_main_for_higher_ranks}
	Let $n_1,\ldots,n_d\geq 3$. On top of \cref{assumpt1} we assume that $\sigma_{r-2,n_1-2,\ldots,n_r-2}$ is generically complex identifiable. Then, for a GIT $\tensor{A}\in \sigma_{r;n_1,\ldots,n_d}$, $r \geq 3$, we have $\mean \kappa(\tensor{A}) = \infty.$
	\end{thm}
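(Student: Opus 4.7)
The plan is to reduce \cref{thm_main_for_higher_ranks} to the rank-$2$ case \cref{thm_main} by parameterizing a dominant piece of $\sigma_{r;n_1,\ldots,n_d}$ so that two of the rank-$1$ summands are forced to live in a common, generically moving $2\times\cdots\times 2$ sub-block, and then bounding the overall condition number from below by the rank-$2$ condition number of that pair.

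For each $k$, let $W_k$ vary in the Grassmannian $\Gr(2,n_k)$, and consider
\[
\Psi(W_\bullet,\tensor{A}_1,\ldots,\tensor{A}_{r-2},\tensor{C}_1,\tensor{C}_2) \;:=\; \tensor{A}_1+\cdots+\tensor{A}_{r-2}+\tensor{C}_1+\tensor{C}_2,
\]
where the $\tensor{A}_i$'s range over the full Segre manifold $\Var{S}_{n_1,\ldots,n_d}$ and the $\tensor{C}_j$'s over the restricted Segre manifold $\Var{S}_{W_1,\ldots,W_d}\subset W_1\otimes\cdots\otimes W_d$. A dimension count gives source dimension $\sum_k 2(n_k-2)+(r-2)(1+\sum_k(n_k-1))+2(1+d) = r(1+\sum_k(n_k-1)) = \dim\sigma_{r;n_1,\ldots,n_d}$, so $\Psi$ is of the right dimension. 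Under \cref{assumpt1} it is generically $\binom{r}{2}$-to-$1$: given a generic $\tensor{A}$ with unique CPD $\tensor{A}_1+\cdots+\tensor{A}_r$, one picks two of the $r$ summands to play the role of $\tensor{C}_1,\tensor{C}_2$ and the $W_k$'s are then determined as the spans of their $k$-th factors. The extra identifiability hypothesis on $\sigma_{r-2;n_1-2,\ldots,n_d-2}$ anchors the parameterization at the "pure block" configurations where the $\tensor{A}_i$'s also lie in $V_k := W_k^\perp$, guaranteeing $r$-identifiability of the reference tensor $\tensor{B}+\tensor{C}$ there, which is needed to run the coarea argument smoothly.

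The key inequality is that for $\tensor{A}=\Psi(\ldots)$,
\[
\kappa(\tensor{A}) \;\geq\; \kappa_{2;2,\ldots,2}(\tensor{C}_1+\tensor{C}_2),
\]
where the right-hand side is the rank-$2$ condition number of $\tensor{C}_1+\tensor{C}_2$ viewed inside $W_1\otimes\cdots\otimes W_d\cong \R^{2\times\cdots\times 2}$. To prove this, write the differential of the Segre addition map $\Phi$ at a preimage $\tuple{a}$ as a block matrix $\d\Phi_{\tuple{a}} = [\,\d\Phi_B \mid \d\Phi_C\,]$, where $\d\Phi_C$ records perturbations of the $(\tensor{C}_1,\tensor{C}_2)$-pair. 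Because $\Var{S}_{W_1,\ldots,W_d}$ is contained in $W_1\otimes\cdots\otimes W_d$, the image of $\d\Phi_C$ lands in that same subspace; testing $\d\Phi_{\tuple{a}}$ on vectors of the form $(0,v)$ yields $\sigma_{\min}(\d\Phi_{\tuple{a}})\leq \sigma_{\min}(\d\Phi_C)$ (with $\sigma_{\min}$ understood modulo the scaling kernel of the Segre parameterization, as in \cite{BV2017}), and taking reciprocals gives the claim.

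Applying the coarea formula to $\Psi$ and substituting the previous bound,
\[
\mean\kappa \;\geq\; \tfrac{1}{\binom{r}{2}} \int \kappa_{2;2,\ldots,2}(\tensor{C}_1+\tensor{C}_2)\, e^{-\|\tensor{A}\|^2/2}\, |J_\Psi|\, \d W_\bullet\, \d\tuple{b}\, \d\tuple{c}.
\]
On a bounded open subset of the parameter space chosen away from the discriminant loci, both $|J_\Psi|$ and $e^{-\|\tensor{A}\|^2/2}$ are bounded below by a positive constant, so integrating out the $(W_\bullet,\tuple{b})$-variables leaves a positive multiple of $\int_K \kappa_{2;2,\ldots,2}(\tensor{C})\,\d\tensor{C}$ for some compact $K\subset \sigma_{2;2,\ldots,2}$. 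Since $d\geq 3$, $\sigma_{2;2,\ldots,2}$ is Kruskal-identifiable, so \cref{thm_main} applies; because the corresponding Gaussian density is bounded above and below on $K$, the rank-$2$ divergence $\mean\kappa_{2;2,\ldots,2}=\infty$ forces divergence of the Lebesgue integral on $K$ as well, and hence $\mean\kappa=\infty$. The main obstacle in this plan is the careful justification of the coarea change of variables with $|J_\Psi|$ uniformly positive on a set of positive Gaussian mass, which is exactly where the extra identifiability hypothesis on $\sigma_{r-2;n_1-2,\ldots,n_d-2}$ is needed; a secondary technical difficulty is interpreting $\sigma_{\min}$ correctly in the condition number formula to accommodate the Segre scaling kernel.
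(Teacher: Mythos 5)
Your plan is essentially the paper's own: reduce to the rank-$2$ case via a monotonicity inequality for $\kappa$ under appending rank-$1$ summands, pull back through the coarea formula, and use the auxiliary identifiability hypothesis on $\sigma_{r-2;n_1-2,\ldots,n_d-2}$ to show the Jacobian of the parametrization does not degenerate at the ``pure block'' configurations (where the remaining rank-$1$ terms are placed in the orthogonal complement of the $2\times\cdots\times 2$ multilinear span of the rank-$2$ part). Your Grassmannian parametrization $\Psi$ is just an explicit blow-up of the paper's $\phi:\sigma_2\times\Var{S}^{\times(r-2)}\to\sigma_r$; the dimension count works out, and your inequality $\kappa(\tensor{A})\geq\kappa_{2;2,\ldots,2}(\tensor{C}_1+\tensor{C}_2)$ is correct (the block Terracini matrix is, in a suitable orthonormal basis, a column submatrix of the full Terracini matrix, so its smallest singular value cannot be smaller). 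It is, however, weaker than the paper's \cref{lem_kappa_inc}, which gives $\kappa(\tensor{A})\geq\kappa(\tensor{C}_1+\tensor{C}_2)$ with the right-hand side computed in the ambient space; using the block condition number costs you nothing here since \cref{thm_main} already applies to the $2\times\cdots\times 2$ case, but you gain nothing from the detour either.

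The genuine gap is in the concluding step. You invoke ``$\mean\kappa_{2;2,\ldots,2}=\infty$ forces divergence of the Lebesgue integral on $K$'' for a compact $K\subset\sigma_{2;2,\ldots,2}$; this inference is not valid. Divergence of the expectation over all of $\sigma_{2;2,\ldots,2}$ does not tell you that the restriction to a preselected compact set diverges --- on a compact $K$ contained in the locus where $\kappa$ is finite and continuous, the integral is finite. To run the argument you must first extract from the proof of \cref{thm_main} a specific point $\tensor{B}$ (in the closure of the well-conditioned locus, where $\kappa$ blows up) such that $\int_{\|\tensor{B}'-\tensor{B}\|<\delta}\kappa(\tensor{B}')\,\d\tensor{B}'=\infty$ for every $\delta>0$, and work in a small neighborhood of that point. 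Relatedly, your phrase ``chosen away from the discriminant loci'' is pointing in the wrong direction: you must position the domain of integration \emph{arbitrarily close} to the locus where $\kappa_{2;2,\ldots,2}$ diverges; what must be controlled there is that the Jacobian of $\Psi$ is bounded \emph{uniformly below} as the rank-$2$ part degenerates, which is exactly what the paper's \cref{important_lemma} (together with \cref{rmk:l44}, a compactness and uniform-continuity argument) establishes and what you flag as an obstacle but do not resolve. Without that uniform lower bound, the Jacobian factor could vanish fast enough to render the pulled-back integral finite.
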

	
	By \cite[Theorem 7.2]{BCO2014}, the assumptions of \cref{thm_main_for_higher_ranks} are satisfied in a large number of cases. In fact, as the size of the tensor increases, the assumptions become weaker: When $n_1 \ge n_2 \ge \cdots \ge n_d \ge 2$ the conditions in \cref{thm_main_for_higher_ranks} are satisfied for $r\leq \min(s_1,s_2)$ with
	\begin{align*}
	 s_1=&\frac{n_1 n_2 - (n_1 + n_2 + n_3 - 2)}{n_1 n_2} r_{n_1,\ldots,n_d}^{\textrm{crit}}\\
	  s_2=&\frac{(n_1-2) (n_2-2) - (n_1 + n_2 + n_3 - 8)}{(n_1-2) (n_2-2)} r_{n_1-2,\ldots,n_d-2}^{\textrm{crit}} + 2.
	\end{align*}%
	Note that for large $n_i$, the second piece $s_2$ is the most restrictive. From \cref{eqn_r_bound} it is implied that $r_{n_1-2,\ldots,n_d-2}^\textrm{crit} = (1 - \delta_{n_1,\ldots,n_d}){r_{n_1,\ldots,n_d}^\textrm{crit}}$ with $\delta_{n_1,\ldots,n_d} = \mathcal{O}( \sum_{k=1}^d \frac{1}{n_k} )$. Therefore, we obtain the following asymptotically optimal result.
	\begin{cor}\label{cor_mostlytrue}
	Let $d \ge 3$ be fixed, and $n_1 \ge n_2 \ge \cdots \ge n_d \ge 2$. If $n_1,\ldots,n_d \to \infty$, then for a GIT $\tensor{A} \in \sigma_{r;n_1,\ldots,n_d}$ we have $\mathbb{E}\,\kappa(\tensor{A}) = \infty$ for all
	\[
	 2 \le r < (1 - \epsilon_{n_1,\ldots,n_d}) \, r_{n_1,\ldots,n_d}^{\textrm{crit}},
	\]
	where
	\(
	 \lim_{n_1,\ldots,n_d \to \infty} \epsilon_{n_1,\ldots,n_d} \to 0.
	\)
	\end{cor}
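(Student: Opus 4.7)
The plan is to deduce \cref{cor_mostlytrue} directly from \cref{thm_main} (for $r = 2$) and \cref{thm_main_for_higher_ranks} (for $r \ge 3$), together with the explicit identifiability range displayed in the paragraph immediately preceding the corollary. For the latter, the strategy is simply to check that each of the two quantities inside the ``$\min$'' in that display has the asymptotic form $(1 - o(1))\, r^{\text{crit}}_{n_1,\ldots,n_d}$ as $n_1,\ldots,n_d \to \infty$, and then to let $\epsilon_{n_1,\ldots,n_d}$ be the larger of the two resulting relative errors.

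For the first quantity, one writes
\[
\frac{n_1 n_2 - (n_1 + n_2 + n_3 - 2)}{n_1 n_2} \;=\; 1 - \frac{n_1 + n_2 + n_3 - 2}{n_1 n_2},
\]
and since $n_1 \ge n_2 \ge n_3$ the subtracted term is at most $3/n_2 \to 0$. For the second quantity, the prefactor $\frac{(n_1-2)(n_2-2) - (n_1+n_2+n_3-8)}{(n_1-2)(n_2-2)}$ differs from $1$ by $\mathcal{O}(1/n_2)$, and the paper has already noted that
\[
r^{\text{crit}}_{n_1-2,\ldots,n_d-2} \;=\; (1 - \delta_{n_1,\ldots,n_d})\, r^{\text{crit}}_{n_1,\ldots,n_d}, \qquad \delta_{n_1,\ldots,n_d} = \mathcal{O}\!\Big(\sum\nolimits_{k=1}^d \tfrac{1}{n_k}\Big),
\]
which is a one-line calculation from the defining formula \cref{eqn_r_bound}. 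Since $r^{\text{crit}}_{n_1,\ldots,n_d}$ grows polynomially in the $n_k$, the additive ``$+2$'' contributes only $o(1) \cdot r^{\text{crit}}_{n_1,\ldots,n_d}$. Combining these expansions puts both quantities in the required form.

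What remains is to confirm that the hypotheses of \cref{thm_main_for_higher_ranks} are automatic throughout the range so obtained. These hypotheses are \cref{assumpt1} for $\sigma_{r;n_1,\ldots,n_d}$ and generic complex identifiability of $\sigma_{r-2;n_1-2,\ldots,n_d-2}$. Both follow from \cite[Theorem 7.2]{BCO2014} applied respectively to the tuples $(n_1,\ldots,n_d)$ and $(n_1-2,\ldots,n_d-2)$, and the sufficient ranges produced by that theorem are \emph{exactly} the two expressions inside the ``$\min$'' already analyzed. Moreover, since $n_1,\ldots,n_d \to \infty$ we eventually have $n_d \ge 3$, so the shifted tensor space $\sigma_{r-2;n_1-2,\ldots,n_d-2}$ is well-defined and \cref{thm_main_for_higher_ranks} applies.

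No conceptual obstacle is anticipated; the argument is essentially bookkeeping. The only mildly delicate point is bundling the three separate error contributions (the first prefactor, the second prefactor, and the relative gap between $r^{\text{crit}}_{n_1-2,\ldots,n_d-2}$ and $r^{\text{crit}}_{n_1,\ldots,n_d}$) into a single $\epsilon_{n_1,\ldots,n_d} \to 0$, which is immediate from the polynomial growth of $r^{\text{crit}}_{n_1,\ldots,n_d}$ together with the $\mathcal{O}(1/n_2)$ and $\mathcal{O}(\sum 1/n_k)$ estimates above.
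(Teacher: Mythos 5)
Your argument is correct and is essentially identical to the paper's own treatment (the paper does not write a separate proof for this corollary but absorbs it into the paragraph immediately preceding it, which records the explicit $\min$ range, the identity $r^{\text{crit}}_{n_1-2,\ldots,n_d-2} = (1 - \delta_{n_1,\ldots,n_d})\,r^{\text{crit}}_{n_1,\ldots,n_d}$ with $\delta = \mathcal{O}(\sum 1/n_k)$, and the observation that the second piece is eventually the binding one). You have fleshed this out with the explicit $\mathcal{O}(1/n_2)$ estimates for the two prefactors, the remark that the additive $+2$ is harmless (indeed it only enlarges the range), and the observation that $n_d \ge 3$ eventually so that \cref{thm_main_for_higher_ranks} is applicable — all of which is correct bookkeeping consistent with the paper's intended reasoning.
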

	
	It follows from dimensionality arguments that if $r>r_\text{crit}$, then the addition map $\Phi$ does not have a local inverse. In fact, in this case all of the connected components in the fiber of $\Phi$ at $\tensor{A}\in\sigma_{r;n_1,\ldots,n_d}$ have positive dimension \cite{Harris1992}. It follows from \cite{BV2017} that the condition number of the tensor rank decomposition problem at each expression \cref{eqn_CPD0} of length $r$ of such a tensor $\tensor{A}$ is $\infty$.
	In this case, $\kappa(\tensor{A}) = \infty$, regardless of how the tensor decomposition problem is defined\footnote{This is exactly the concern of \cref{rem_nodefinition}: What computational problem are we interested in solving when a tensor has several distinct CPDs? Are we interested in the CPD with the best sensitivity? Or the worst? Or the expected condition number of one randomly chosen CPD in the fiber? This depends on the context. The results of this paper are valid regardless of the particular variation of the problem one is interested in.} when $\tensor{A}$ has multiple distinct decompositions; see also the discussion in \cite[Remark 14.14]{BC2013}. In this case the average condition number is infinite, as well.
	
	Our results lead us to the conjecture that the expected condition number is infinite, also without making the assumption from \cref{thm_main_for_higher_ranks} and without any upper bound on the rank.
	\begin{conj} \label{conj_regular_cn}
	Let $\tensor{A}\in \sigma_{r;n_1,\ldots,n_d}$ be a GIT
	of rank $r\geq 2$. Then, $\mean \kappa(\tensor{A}) = \infty.$
	\end{conj}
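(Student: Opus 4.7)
The plan would be to extend the strategy of \cref{thm_main_for_higher_ranks} by replacing its implicit slice decomposition with a more flexible construction that does not invoke identifiability of the smaller tensor space. A first, essentially free, observation is that for $r$ beyond $r^{\mathrm{crit}}_{n_1,\ldots,n_d}$ the conjecture holds trivially, because the fibers of $\Phi$ are positive-dimensional and, as recalled right after \cref{eqn_r_bound}, $\kappa(\tensor{A}) = \infty$ almost everywhere. So the nontrivial range to address is exactly where \cref{assumpt1} continues to hold for $\sigma_{r;n_1,\ldots,n_d}$ but the auxiliary assumption on $\sigma_{r-2;n_1-2,\ldots,n_d-2}$ fails.

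For this range, \cref{thm_main_for_higher_ranks} presumably builds a positive-measure ``bad'' subset of $\sigma_{r;n_1,\ldots,n_d}$ from tensors of the form $\tensor{B}+\tensor{C}$, where $\tensor{B}$ is a rank-$2$ tensor in a fixed $2\times\cdots\times 2$ slice that is pushed into the ill-conditioned region of \cref{thm_main}, and $\tensor{C}$ is a rank-$(r-2)$ tensor in a complementary $(n_1-2)\times\cdots\times(n_d-2)$ slice; the auxiliary identifiability assumption is then invoked to give $\tensor{C}$ a well-defined CPD, so that $\kappa(\tensor{B}+\tensor{C})$ splits multiplicatively between the two blocks. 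To drop that assumption, my plan is to let $\tensor{C}$ range freely over $\sigma_{r-2;n_1,\ldots,n_d}$ subject only to the open condition $\tensor{B}+\tensor{C}\in\Var{N}_{r;n_1,\ldots,n_d}$, which is automatic on an open dense set by \cref{assumpt1}. The required lower bound $\kappa(\tensor{B}+\tensor{C}) \geq c\,\kappa(\tensor{B})$ for some absolute constant $c>0$ should then follow from the Terracini-style characterization of the condition number in \cite{BV2017}: the inverse condition number is governed by the smallest singular value of the differential of $\Phi$ at the CPD, and this is bounded above by the same quantity restricted to the tangent space of the rank-$2$ sub-CPD, independently of whether the rank-$(r-2)$ stratum is identifiable.

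The main obstacle, and the step I expect to be technically hardest, is controlling the pushforward of the Gaussian density under the parametrization $(\tensor{B},\tensor{C}) \mapsto \tensor{B}+\tensor{C}$ in this non-slice setting. In the slice decomposition the Jacobian is essentially constant because the tangent spaces of the two Segre components are orthogonal by construction; without that orthogonality, the Jacobian involves the principal angles between the tangent spaces of the two Segre components, and these angles degenerate precisely in the ill-conditioned rank-$2$ region where $\kappa(\tensor{B})$ blows up. Establishing a uniform positive lower bound on this Jacobian on a subset of positive measure, or equivalently showing that the tangent-space degeneration of $\tensor{B}$ does not force $\tensor{C}$ into a shrinking region of $\sigma_{r-2;n_1,\ldots,n_d}$, is the crux. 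A complementary approach that may sidestep the identifiability issue entirely would be to work directly in a tubular neighborhood of the locus where either identifiability or the $r$-identifiability of suitable $(r-2)$-projections fails, and apply a coarea formula with the distance to this locus as parameter, reducing the conjecture to a singular-integral estimate analogous to the one driving \cref{thm_main}.
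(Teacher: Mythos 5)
The statement you are addressing is \cref{conj_regular_cn}, which the paper explicitly leaves open: the authors prove only the rank-$2$ case (\cref{thm_main}), the higher-rank case under the additional hypothesis that $\sigma_{r-2;n_1-2,\ldots,n_d-2}$ is generically complex identifiable (\cref{thm_main_for_higher_ranks}), and the asymptotic \cref{cor_mostlytrue}. So there is no paper proof to compare against, and your text is appropriately framed as a plan rather than a finished argument. (Your remark about $r>r^{\text{crit}}_{n_1,\ldots,n_d}$ appears in the paper after \cref{eqn_r_bound}, but note it is vacuous for the conjecture as stated: a GIT presupposes \cref{assumpt1}, which already forces $r\le r^{\text{crit}}_{n_1,\ldots,n_d}$.)

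Your reading of the mechanism behind \cref{thm_main_for_higher_ranks} is accurate, and so is your observation about the monotonicity inequality: it is \cref{lem_kappa_inc}, and its interlacing proof indeed yields $\kappa(\tensor{B}+\tensor{C})\ge\kappa(\tensor{B})$ (with $c=1$) as soon as $\tensor{B}$ is $2$-identifiable and $\tensor{B}+\tensor{C}$ is $r$-identifiable, with no identifiability demand on $\tensor{C}$. That is not where the auxiliary hypothesis enters. Where it enters is the Jacobian lower bound, i.e.\ \cref{important_lemma}, whose proof places the rank-$(r-2)$ tuple inside the $(n_1-2)\times\cdots\times(n_d-2)$ slice complementary to the $2\times\cdots\times 2$ slice carrying $\tensor{B}$; via \cref{lemmaB1} the relevant tangent spaces are then mutually orthogonal, so $\mathrm{Jac}(\phi)$ factors and is bounded below by a constant $\mu>0$ uniformly in $\tensor{B}$, and the identifiability of $\sigma_{r-2;n_1-2,\ldots,n_d-2}$ is what guarantees a tuple with nondegenerate Jacobian exists inside that shrunken slice.

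This is exactly the gap you flag yourself and do not close. Once $\tensor{C}$ ranges freely over $\sigma_{r-2;n_1,\ldots,n_d}$ the orthogonality is gone, and you give no argument ruling out that $\mathrm{Jac}(\phi)$ collapses on the same locus of $\tensor{B}$ where $\kappa(\tensor{B})$ diverges, in which case the product $\mathrm{Jac}(\phi)\cdot\kappa$ could integrate to a finite value. Your ``tubular neighborhood'' alternative is too vague to evaluate. Until a uniform Jacobian lower bound is established on a positive-measure subset of the ill-conditioned region, this plan does not prove the conjecture --- and supplying such a bound is precisely the obstacle the paper's extra hypothesis was designed to circumvent.
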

	\Cref{cor_mostlytrue} above proves this conjecture asymptotically, in practice leaving only a small range of ranks for which it might fail.
	
	As mentioned above, it turns out that for GITs the expected \emph{angular condition number} is not always infinite. Formally, the angular condition number is defined as follows: Let the canonical projection onto the sphere be $p: \R^{n_1\times \cdots\times n_d} \to \mathbb{S}(\R^{n_1\times \cdots\times n_d})$. Then the angular condition number of $\tensor{A}\in \Var{N}_{r;n_1,\ldots,n_d}$ is
	\begin{equation}\label{def_kappa_ang}
	\Kang(\tensor{A}) := \lim_{\epsilon \to 0}\;\; \sup_{\substack{\Vert \Delta\tensor{A}\Vert <\epsilon,\\ \tensor{A}+\Delta\tensor{A}\in\sigma_{r;n_1,\ldots,n_d}} } \; \frac{\Vert (p^{\times r}\circ\Phi^{-1}_\tuple{a})(\tensor{A}) - (p^{\times r}\circ\Phi^{-1}_\tuple{a})(\tensor{A}+\Delta\tensor{A}) \Vert}{\Vert \Delta\tensor{A}\Vert},
	\end{equation}
	where $\Phi^{-1}_\tuple{a}$ is an arbitrary local inverse of $\Phi$ with $\tensor{A} = \Phi(\tuple{a})$. As before we do not specify what happens on the measure-zero set $\sigma_{r;n_1,\ldots,n_d}\setminus\Var{N}_{r;n_1,\ldots,n_d}$, because it is not relevant for this paper. The angular condition number only accounts for the angular part of the CPD, i.e., the directions of the tensors, not for their magnitude, hence the name.
	
	To distinguish the condition numbers \cref{def_kappa,def_kappa_ang}, we will refer to the condition number from \cref{def_kappa} as the \emph{regular condition number}. Oftentimes we even drop the clarification ``regular''.
	
	Here is the result for $\Kang(\tensor{A})$ for tensors of rank two that we prove in \cref{sec:angular_CN}.
	\begin{thm}\label{thm_main2}
	Let $\tensor{A}\in \sigma_{2;n_1,\ldots,n_d}$ be a GIT of rank 2. Then, $\mean \Kang(\tensor{A})  < \infty$.
	\end{thm}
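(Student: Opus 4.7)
The plan is to transfer the integral $\mathbb{E}\Kang(\tensor{A})$ from $\sigma_{2;n_1,\ldots,n_d}$ to the parameter space via the coarea formula applied to the addition map $\Phi$, extract the scaling factors analytically, and reduce the problem to the convergence of an integral over the unit Segre squared. Specifically, using $\Phi:\Var{S}^{\times 2}\to \R^{n_1\times\cdots\times n_d}$ and writing each rank-$1$ summand as $\tensor{A}_i=\lambda_i\tensor{U}_i$ with $\lambda_i\in\R_+$ and $\tensor{U}_i\in\Var{S}\cap\Sp$, the coarea formula yields
\[
\mathbb{E}\Kang(\tensor{A})=\frac{1}{2\,C_{2;n_1,\ldots,n_d}}\int \Kang(\Phi(\tuple{a}))\,e^{-\|\Phi(\tuple{a})\|^2/2}\,\mathrm{NJ}(\Phi)(\tuple{a})\,d\lambda_1 d\lambda_2\,d\tensor{U}_1 d\tensor{U}_2,
\]
where $\mathrm{NJ}(\Phi)=\sqrt{\det(d\Phi^Td\Phi)}$ is the normal Jacobian.

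Next, the derivative of $\Phi$ in orthonormal bases of the parameter space is the matrix $M=[\tensor{U}_1\mid\lambda_1 Y_1\mid\tensor{U}_2\mid\lambda_2 Y_2]$, where $Y_i$ is an orthonormal basis of $T_{\tensor{U}_i}(\Var{S}\cap\Sp)$. Factoring out the scales gives $M=\tilde M\,\mathrm{diag}(1,\lambda_1 I,1,\lambda_2 I)$ with $\tilde M=[\tensor{U}_1\mid Y_1\mid\tensor{U}_2\mid Y_2]$ depending only on the angular parts. Hence
\[
\mathrm{NJ}(\Phi)=\lambda_1^{D-1}\lambda_2^{D-1}\,\mathrm{NJ}(\tilde\Phi),\quad D:=1+\sum_{k=1}^d(n_k-1).
\]
Since the angular rows of $M^{-1}$ equal $\mathrm{diag}(\lambda_1^{-1}I,\lambda_2^{-1}I)$ times the corresponding rows of $\tilde M^{-1}$, one obtains the sharp bound
\[
\Kang(\tensor{A})\le \frac{\sqrt{2}}{\min(\lambda_1,\lambda_2)\,\sigma_{\min}(\tilde M)}.
\]
A crucial observation is that $\mathrm{NJ}(\tilde\Phi)/\sigma_{\min}(\tilde M)=\prod_{i>1}\sigma_i(\tilde M)$ is uniformly bounded, since all columns of $\tilde M$ have unit norm; call this bound $K_0$.

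The $\lambda$-integration is then carried out in polar coordinates $\lambda_1=r\cos\theta$, $\lambda_2=r\sin\theta$. Setting $c=\langle\tensor{U}_1,\tensor{U}_2\rangle$, one computes $\|\Phi\|^2=r^2(1+c\sin 2\theta)$, and the inner integral in $r$ yields a closed-form expression proportional to $(1+c\sin 2\theta)^{-(D-1/2)}$. Integration in $\theta$ then produces a factor $\Psi(c)$ which is finite for $c>-1$ and diverges like $(1+c)^{1-D}$ as $c\to -1$. Thus
\[
\mathbb{E}\Kang(\tensor{A})\le K_0\int_{(\Var{S}\cap\Sp)^2}\Psi(\langle\tensor{U}_1,\tensor{U}_2\rangle)\,d\tensor{U}_1 d\tensor{U}_2,
\]
up to a universal constant.

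The main obstacle lies in the last step: controlling the divergence near the anti-aligned locus $\tensor{U}_2=-\tensor{U}_1$, where both $\sigma_{\min}(\tilde M)$ collapses and the summands can cancel with $\lambda_1\approx\lambda_2$ arbitrarily large. The strategy here is a stratification argument in the unit Segre: parametrize $\tensor{U}_2$ around $-\tensor{U}_1$ by a tangent vector $\eta\in T_{-\tensor{U}_1}(\Var{S}\cap\Sp)$, so that $1+\langle\tensor{U}_1,\tensor{U}_2\rangle=\tfrac{1}{2}\|\eta\|^2+O(\|\eta\|^4)$. In polar coordinates on this $(D-1)$-dimensional tangent space, one shows that the factor $K_0$ absorbs enough powers of $\|\eta\|$ coming from $\sigma_{\min}(\tilde M)$ to render the remaining integrand $\|\eta\|^{2(1-D)}\cdot\|\eta\|^{D-2}\,d\|\eta\|$ integrable after the bound from $K_0$ is applied; the refined estimate uses the fact that, in contrast to the regular condition number analysed in \refthm{thm_main}, the angular condition number is the norm of a smaller derivative map (the one that forgets the scaling direction along each Segre factor), so its singular behaviour at the anti-aligned locus carries a strictly milder exponent. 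Once this local integrability is established, a partition-of-unity argument over the compact parameter space $(\Var{S}\cap\Sp)^2$ completes the proof. The technical heart of the argument is precisely this cancellation between the Jacobian weight $\mathrm{NJ}(\tilde\Phi)$ and the vanishing $\sigma_{\min}(\tilde M)$, which is what distinguishes the angular from the regular condition number in terms of average integrability.
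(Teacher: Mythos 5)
Your proposal follows the same overall architecture as the paper's proof: parametrize $\sigma_2$ via the addition map and coarea, strip out the scaling directions, integrate the weights in polar coordinates, and then show that the remaining angular integral converges despite the blow-up near the anti-aligned locus $\tensor{U}_2\to-\tensor{U}_1$. However, the argument has a genuine gap at precisely the step you yourself flag as the ``technical heart,'' and the gap is not cosmetic: the exponents are on the boundary of convergence, so the missing estimate \emph{is} the proof.

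Concretely, you first assert that $K_0:=\mathrm{NJ}(\tilde\Phi)/\sigma_{\min}(\tilde M)=\prod_{i>1}\sigma_i(\tilde M)$ is \emph{uniformly bounded} because $\tilde M$ has unit columns, and use this constant to write $\mathbb{E}\,\Kang\le K_0\int\Psi(c)\,d\tensor{U}_1 d\tensor{U}_2$. But with $\Psi(c)\sim(1+c)^{1-D}$ and $1+c\sim\tfrac12\|\eta\|^2$ on the $(D-1)$-dimensional slice around $-\tensor{U}_1$, the integrand behaves like $\|\eta\|^{2(1-D)}\cdot\|\eta\|^{D-2}\,d\|\eta\|=\|\eta\|^{-D}\,d\|\eta\|$, which diverges badly. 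You then say that ``$K_0$ absorbs enough powers of $\|\eta\|$,'' which directly contradicts having declared $K_0$ a uniform constant a few lines earlier. What is actually needed — and is the entire content of the paper's Lemma~\ref{lem:cotaq} and its proof in Appendix~C — is the quantitative vanishing estimate $q(\tilde M)\lesssim\|\tensor{U}_1+\tensor{U}_2\|^{\Sigma-1}$ (equivalently, that about $\Sigma$ singular values of $\tilde M$ vanish linearly in $\|\eta\|$ near the anti-aligned locus, traceable through a careful eigenvalue analysis of the Gram matrix). With this estimate the power count closes \emph{exactly}: the combined exponent is $0$, so the final one-dimensional integral is of $O(1)$-type and finite, but any loss of even a single power of $\|\eta\|$ would push it to divergence. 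Asserting that a cancellation happens is not a substitute for computing it.

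A second, smaller concern: you replace the exact formula $\Kang(\tensor{A})=1/\varsigma_{\min}\bigl((\Id-MM^\dagger)L\bigr)$ (\cref{prop:cnangular}) by the weaker lower bound $\varsigma_{\min}\bigl((\Id-MM^\dagger)L\bigr)\ge\min(\lambda_1,\lambda_2)\,\sigma_{\min}(\tilde M)$. This lower bound is indeed correct, but it differs from the paper's integrand $\mathrm{vol}(M)\,q\bigl((\Id-MM^\dagger)[L_1,L_2]\bigr)$ (see \cref{lem:la}) by the multiplicative factor $\varsigma_{\min}\bigl((\Id-MM^\dagger)[L_1,L_2]\bigr)/\varsigma_{\min}(\tilde M)\ge 1$. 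If that ratio is uniformly bounded near the anti-aligned locus the approach survives; if it grows even slowly in $1/\|\eta\|$, the already tight power count breaks. This needs to be verified, not assumed. In short: your scaffold is right, but the load-bearing beam — the precise order of vanishing of the Jacobian weight — has been sketched, not built.
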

	Unfortunately, we do not know if this theorem can be extended to higher rank tensors. However, based on our experiments in Section \ref{sec:experiments}, we pose the following:
	
	\begin{conj}
		Let $\tensor{A}\in \sigma_{r;n_1,\ldots,n_d}$ be a GIT of rank r. Then, $\mean \Kang(\tensor{A})  < \infty$.
	\end{conj}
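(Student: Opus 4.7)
My plan is to extend the argument used for \cref{thm_main2} to arbitrary rank $r$ by combining a coarea-type change of variables on the parameter side with a quantitative estimate for the pseudoinverse of the angular differential of $\Phi$.

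First, I would write each summand as $\tensor{A}_i = \lambda_i \tensor{U}_i$ with $\lambda_i = \Norm{\tensor{A}_i}>0$ and $\tensor{U}_i \in \Var{S}_{n_1,\ldots,n_d}\cap \Sp(\R^{n_1\times\cdots\times n_d})$. Using the addition map $\Phi$ from \cref{def_Phi}, the coarea formula, and \cref{assumpt1}, the expectation $\mean\Kang(\tensor{A})$ transforms, up to the constant $1/(r!\,C_{r;n_1,\ldots,n_d})$, into an integral over the product of spherical Segre manifolds $(\Var{S}_{n_1,\ldots,n_d}\cap\Sp)^{\times r}$ and the positive orthant $\R_{>0}^r$ of magnitudes, weighted by a Gaussian in $\Norm{\sum_i \lambda_i \tensor{U}_i}$, the absolute Jacobian $J$ of $\Phi$, and $\Kang$ itself; the factor $r!$ accounts for the permutations in each identifiable fiber.

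Second, I would prove a pointwise upper bound of the form
\[
 \Kang(\tensor{A}) \;\leq\; \frac{C_r}{\min_i \lambda_i}\, \Norm{\Pi^\dagger},
\]
where $\Pi$ is the restriction of the differential of $\Phi$ at $(\tensor{U}_1,\ldots,\tensor{U}_r)$ to the product of tangent spaces of the spherical Segres (radial directions projected out), and $C_r$ depends only on $r$. Such a bound is natural because the normalization $p^{\times r}$ appearing in \cref{def_kappa_ang} orthogonally projects the differential $\d \Phi^{-1}_{\tuple{a}}$ onto these tangent spaces; the factor $1/\min_i\lambda_i$ arises from the rescaling of Segre tangent vectors by $\lambda_i$. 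This decomposition cleanly isolates the two sources of ill-conditioning: small magnitudes and degenerate angular configurations.

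Third, I would estimate the two factors against the measure obtained in the first step. The crucial observation is that $J$ is a homogeneous polynomial of positive degree in each $\lambda_i$, which overcompensates the singular factor $\lambda_{\min}^{-1}$ and leaves a density on $\R_{>0}^r$ that integrates to a finite quantity against the Gaussian weight---this is exactly the cancellation that makes the rank-$2$ computation go through in \cref{thm_main2}, and it remains valid for every $r$. After integrating out the magnitudes, the problem reduces to bounding the expectation of $\Norm{\Pi^\dagger}$ over a probability measure on $(\Var{S}_{n_1,\ldots,n_d}\cap\Sp)^{\times r}$ that is absolutely continuous with respect to the product of uniform measures.

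The main obstacle I anticipate is exactly this last finiteness statement, which amounts to a tail bound $\Prob(\Norm{\Pi^\dagger}>t)=O(t^{-\alpha})$ with $\alpha>1$. For rank $r=2$, the locus where $\Pi$ loses rank reduces to pairs of parallel rank-1 tensors and is amenable to a direct analysis; for general $r$ it is the singular locus of the abstract secant variety, whose codimension and local stratification are substantially more intricate. Obtaining the required quantitative estimate will likely require a combination of stratification arguments in the style of \cite{COV2017,Landsberg2012} with a careful use of Kruskal's criterion to guarantee lower bounds on the smallest singular value of $\Pi$ on positive-measure subsets, complemented by the probabilistic techniques from \cite{BBV1,BV2018c}, applied here to the Jacobian of $\Phi$ rather than to $\Phi^{-1}$.
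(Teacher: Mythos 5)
The statement you have attempted to prove is, in the paper, explicitly posed as an open \emph{conjecture}, not a theorem: immediately before it the authors write that they ``do not know if this theorem [\cref{thm_main2}] can be extended to higher rank tensors,'' and the conjecture is supported only by the numerical experiments of \cref{sec:experiments}. There is therefore no proof in the paper against which to compare yours.

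Your plan is a reasonable and structurally faithful attempt to generalize the rank-$2$ argument: the coarea reduction to $((0,\infty)\times\Var{P})^{\times r}$, the factorization $\Kang(\tensor{A}) \le (\min_i\lambda_i)^{-1}\, / \,\varsigma_{\min}\bigl((\Id-MM^\dagger)[L_1,\ldots,L_r]\bigr)$ obtained from the obvious $r$-term analogue of \cref{prop:cnangular} together with the factorization $L=[L_1,\ldots,L_r]\operatorname{diag}(\lambda_1\Id,\ldots,\lambda_r\Id)$, and the observation that the Jacobian factor $\prod_i \lambda_i^{\Sigma-1}$ absorbs the $(\min_i\lambda_i)^{-1}$ singularity under the Gaussian weight are all sound. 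But you have yourself identified the genuine gap: the finiteness of the expectation of $\Norm{\Pi^\dagger}$ (equivalently, a sufficiently fast tail bound on $\varsigma_{\min}$ of the projected Terracini matrix) over the induced measure on $(\Var{S}\cap\Sp)^{\times r}$ is not established. In the rank-$2$ case this is handled by the explicit spectral computation of the Terracini Gram matrix culminating in \cref{lem:cotaq}, which exploits the very concrete structure of the degeneracy locus of two spherical Segre points (the matrices $E_\uparrow,E_\downarrow,F_\uparrow,F_\downarrow$, the rank-one perturbations $\pm z\vect{g}\vect{g}^T$, Ger\v{s}gorin, and so on). No analogous closed-form diagonalization is available for $r\ge 3$; the singular locus of the Terracini matrix for $r$ points has a strictly more complicated stratification, and the quantitative estimate you need on that stratification is exactly what is open. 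So your proposal is not a proof but a correct identification of the obstacle, which is the same obstacle that led the authors to leave the statement as a conjecture supported only by the empirical data in \cref{tab_parameters}.
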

	
	{We finally observe that {the foregoing main results are not limited to GITs. They are valid} for a wide range of {distributions of} random tensors.
	\begin{thm}\label{cor:RIT}
	Theorems \ref{thm_main}, \ref{thm_main_for_higher_ranks}, Corollary \ref{cor_mostlytrue} and Theorem \ref{thm_main2} are still true if instead of GITs we take random tensors defined by a wide range of other probability distributions, including some of interest such as:
	\begin{enumerate}
	\item All probability distributions that are comparable to the standard Gaussian density $\rho$. This means that the random tensor $\tensor{A}$ has a density $\hat{\rho}$ for which there exists positive constants $c_1,c_2$ such that $c_1 \le \frac{\hat\rho}{\rho} \le c_2$.
	\item {Uniformly} randomly chosen $\tensor A$ in the unit sphere $\mathbb S(\sigma_r)$.
	\item {Uniformly} randomly chosen $\tensor A$ in the unit ball $\{\tensor A\in\sigma_r:\|\tensor A\|\leq 1\}$.
	\end{enumerate}
	\end{thm}}

	\subsection{Acknowledgements} 
	Part of this work was made while the second and third author were visiting the Universidad de Cantabria, supported by the funds of Grant 21.SI01.64658 (Banco Santander and Universidad de Cantabria), Grant MTM2017-83816-P from the Spanish Ministry of Science, and the FWO Grant for a long stay abroad V401518N. We thank these institutions for their support. We also thank two anonymous referees for helpful comments.
	
	\subsection{Organization of the article}
	The rest of the article is organized as follows. In the next section we give some preliminary material. {Thereafter, in \cref{sec:proof_main,sec:proof_main_higher_ranks,sec:angular_CN,sec:otherrandom}, we successively prove \cref{thm_main}, \cref{thm_main_for_higher_ranks}, \cref{thm_main2} and \cref{cor:RIT}.}
	In \cref{sec:experiments} we present numerical experiments supporting our main results. Finally, in \cref{app_main,sec:proof_important_lemma,app_angular} we give proofs for several lemmata that we need in the other sections.
	
	\section{Notation and Preliminaries} \label{sec_preliminaries}
	
	\subsection{Notation}
	We will use the following typographic conventions for convenience: Vectors are typeset in a bold face ($\vect{a}, \vect{b}$), matrices in upper case ($A$, $B$), tensors in a calligraphic font ($\tensor{A}$, $\tensor{B}$), and manifolds and linear spaces in a different calligraphic font ($\Var{A}, \Var{B}$).
	
	The positive integer $d \ge 2$ is reserved for the order of a tensor, $n_1,\ldots,n_d \ge 2$ are its dimensions, and $r \ge 1$ is its rank. The following integers are used throughout the paper:
	\[
	 \Sigma := 1 + \sum_{k=1}^d (n_k - 1) \quad\text{and}\quad
	 \Pi := \prod_{k=1}^d n_k;
	\]
	they correspond to the dimension of the Segre manifold $\Var{S}_{n_1,\ldots,n_d}$ and the dimension of the ambient space $\R^{n_1 \times \cdots \times n_d}$ respectively.
	The symmetric group on $r$ elements is denoted by $\fr{S}_r$.
	
	We work exclusively with real vector spaces, for which $\langle \cdot ,\cdot \rangle$ denotes the Euclidean inner product and $\Vert \cdot \Vert$ always denotes the associated norm.
	We will switch {freely} between the finite-dimensional vector spaces $\R^{n_1 \cdots n_d}$ and $\R^{n_1 \times \cdots \times n_d}$ for representing tensors in the abstract vector space $\R^{n_1} \otimes \cdots \otimes \R^{n_d}$. By the above choice of norms all of these finite-dimensional Hilbert spaces are isometric; specifically, if $\tensor{A} \in \R^{n_1} \otimes \cdots \otimes \R^{n_d}$ and $\vect{a} \in \R^{n_1 \cdots n_d}$ is its coordinate array with respect to an orthogonal basis, then $\|\tensor{A}\| = \|\vect{a}\|$. Similarly, if the coordinates $\vect{a}$ are reshaped into a multidimensional array $A \in \R^{n_1 \times \cdots \times n_d}$, then $\|A\| = \|\tensor{A}\| = \|\vect{a}\|$. It is important to note that this notation can conflict with the usual meaning of $\|A\|$ when $d=2$; to distinguish the spectral norm from the standard norm in this paper, we write $\|A\|_2$ for the former; see \cref{def_spectral_norm}.
	
	{For matrices $U_1 \in \R^{m_1\times n_1}, \ldots, U_d \in \R^{m_d\times n_d}$, the tensor product $U_1\otimes \cdots\otimes U_d$ acts on rank-$1$ tensors as follows:
	\[
	(U_1\otimes \cdots\otimes U_d)(\vect{u}^1\otimes \cdots \otimes \vect{u}^d) = (U_1\vect{u}^1)\otimes \cdots \otimes (U_d\vect{u}^d).
	\]
	By the universal property \cite{Greub1978}, this extends to a linear map $\R^{n_1} \otimes \cdots \otimes \R^{n_d}\to \R^{m_1} \otimes \cdots \otimes \R^{m_d}$. Note that we can view $U_1\otimes \cdots\otimes U_d $ as a matrix in $\R^{(m_1\cdots m_d)\times (n_1\cdots n_d)}$.}
	
	For any subset $U \subset V$ of a {normed} vector space $V$, we define the sphere over $U$ as
	\[
	\mathbb{S}(U) := \left\{ \frac{\vect{u}}{\|\vect{u}\|}  \mid \vect{u} \in U\setminus\{0\} \right\} \subset V.
	\]
	In particular, the unit sphere in $\R^n$ is denoted by $\mathbb{S}(\R^n)$.
	
	Given an $m \times n$ matrix $R$ or a linear operator $R:\R^n\to \R^m$, we denote the pseudo-inverse by $R^\dagger$. The spectral norm and smallest singular value of $R$ are denoted respectively by
	\begin{equation}\label{def_spectral_norm}
	\Vert R\Vert_2 := \max_{\vect{v}\in \R^n}\frac{\|R \vect{v}\|}{\|\vect{v}\|} \quad \text{ and }\quad
	\varsigma_{\min}(R) := \min_{\vect{v}\in \R^n}\frac{\|R \vect{v}\|}{\|\vect{v}\|}.
	\end{equation}
	A special role will be played in this paper by the product of all but the smallest singular values of $R$, which we denote by $q(R)$. In other words, if $R$ is injective, then
	\begin{equation}\label{def_q}
	q(R) := {\varsigma_1(R) \cdots \varsigma_{n-1}(R)} = \frac{\sqrt{\det(R^TR)}}{\varsigma_{\min}(R)},
	\end{equation}
	where $R^T$ is the transposed matrix (operator) and $\varsigma_i(R)$ is the $i$th largest singular value of $R$.
	
	\subsection{Differential geometry}
	In this article we only consider submanifolds of Euclidean spaces; see, e.g., \cite{Lee2013} for the general definitions.
	A smooth ($C^\infty$) manifold is a topological manifold with a smooth structure, in the sense of \cite{Lee2013}.
	The \textit{tangent space} $\Tang{x}{\Var{M}}$ at $x$ to an embedded $n$-dimensional smooth submanifold $\Var{M} \subset \R^N$ is the set
	\[
	\left\{ \vect{v} \in \R^N \;|\; \exists \text{ a smooth curve } \gamma(t) \subset \Var{M} \text{ with }
	\gamma(0)=x: \vect{v} = \frac{\d{}}{\d{}t}\Big|_{t=0} \, \gamma(t) \right\}.                                                         \]
	At every point $x \in \Var{M}$, there exist open neighborhoods $\Var{V} \subset \Var{M}$ and $\Var{U} \subset \Tang{x}{\Var{M}}$ of $x$, and a bijective smooth map $\phi : \Var{V} \to \Var{U}$ with smooth inverse. The tuple $(\Var{V},\phi)$ is a \textit{coordinate chart}
	of $\Var{M}$. A \textit{smooth map} between manifolds $F : \Var{M} \to \Var{N}$ is a map such that for every $x \in \Var{M}$ and coordinate chart $(\Var{V},\phi)$ containing $x$, and every coordinate chart $(\Var{W}, \psi)$ containing $F(x)$, we have that $\psi \circ F \circ \phi^{-1} : \phi(\Var{U}) \to \psi(F(\Var{U}))$ is a smooth map. The \textit{derivative} of $F$ can be defined as the linear map $\deriv{F}{x} : \Tang{x}{\Var{M}} \to \Tang{F(x)}{\Var{N}}$ taking the tangent vector $\vect{v} \in \Tang{x}{\Var{M}}$ to $\frac{\d{}}{\d{}t}|_{t=0} F(\gamma(t)) \in \Tang{F(x)}{\Var{N}}$ where $\gamma(t) \subset \Var{M}$ is a curve with $\gamma(0) = x$ and $\gamma'(0) = \vect{v}$.
	If $\dim \Var M = \dim \Var N$ and if $\deriv{F}{x}$ has full rank, there is a neighborhood $\Var{W} \subset \Var{M}$ on which $F$ is invertible and its inverse is also smooth; that is, $F$ is a \emph{diffeomorphism} between $\Var{W}$ and $F(\Var{W})$. If this property holds for all $x \in \Var{M}$, then $F$ is called a \emph{local diffeomorphism}.
	
	A differentiable submanifold $\Var M\subset \R^N$ can be equipped with a \emph{Riemannian metric} $g$, turning it into a \textit{Riemannian manifold}, {allowing for the computation of integrals. The manifolds in this paper are all embedded submanifolds of Euclidean space, so the Riemannian metric for us will always be the metric inherited from the ambient space}.
	
	\subsection{The manifold of $r$-nice tensors}
	As in the introduction, the Segre manifold is
		\[
		\cal{S}_{n_1,\ldots,n_d} = \{\vect{u} ^1\otimes\cdots\otimes\vect{u}^d \mid \vect{u}^k\in \R^{n_k}\backslash \{0\}\}.
		\]
	It is a smooth manifold of dimension $\Sigma$. Its tangent space is given by
	\begin{equation}\label{tangent_space_segre}
	\Tang{\vect{u}^1\otimes\cdots\otimes\vect{u}^d}{\Var{S}_{n_1,\ldots,n_d}}
	= \R^{n_1} \otimes \vect{u}^2 \otimes\cdots\otimes\vect{u}^d
	+ \cdots +\vect{u}^1\otimes\cdots\otimes \vect{u}^{d-1}\otimes\R^{n_d};
	\end{equation}
	note that this is not a direct sum.
	
	The Euclidean inner product between rank-1 tensors is {conveniently computed} by the following {formula (see, e.g., \cite{Hackbusch2012})}:
	\begin{equation}\label{inner_prod_rank_one}
	\langle \vect{u}^1\otimes\cdots\otimes\vect{u}^d,\vect{v}^1\otimes\cdots\otimes\vect{v}^d\rangle = \prod_{i=1}^d \langle \vect{u}^i, \vect{v}^i\rangle.
	\end{equation}
	The set of tensors of rank at most $r$ is denoted by
	\[
	\sigma_{r;n_1,\ldots,n_d} = \{\tensor{A}\in \R^{n_1\times \cdots \times n_d} \mid \mathrm{rank}(\tensor{A})\leq r\};
	\]
	it is a semialgebraic set of dimension at most $\min\{r\Sigma, \Pi\}$; see, e.g., \cite{QCL2016}. Under \cref{assumpt1} the dimension of $\sigma_{r;n_1,\ldots,n_d}$ is exactly $r\Sigma$.

	{In \cite[Section 4]{BBV1} we introduced an open dense subset of $\sigma_{r;n_1,\ldots,n_d}$ with favorable differential-geometric properties.} We called it the manifold of \emph{$r$-nice tensors} in \cite[Definition 4.2]{BBV1}. Below, we present a slightly modified definition that is suitable for our present purpose; it eliminates conditions $(4)$ and $(5)$ from \cite[Definition 4.2]{BBV1}.
	
	{In what follows, we denote the real closure in the Zariski topology of a subset $A \subset \R^\Pi$ by~$\overline{A}$. This is the real algebraic variety $\overline{A}:=\overline{A}^{\mathbb C} \cap  \R^\Pi$, where $\overline{A}^{\mathbb C}$ is the closure of $A$ in the Zariski topology in $\C^\Pi$. By \cite[Lemma 8]{whitney}, the real dimension of $\overline{A}$ equals the complex dimension of~$\overline{A}^{\mathbb C}$.
	}
	
	\begin{dfn}\label{r_nice}
	Recall the addition map $\Phi$ defined in \cref{def_Phi}. Let $\Var M_{r;n_1,\ldots,n_d} \subset (\Var{S}_{n_1,\ldots,n_d})^{\times r}$ be the subset of $r$-tuples $\tuple{a}:=(\tensor{A}_1,\ldots,\tensor{A}_r)$ of rank-1 tensors satisfying all of the following properties:
	\begin{enumerate}
	\item $\Phi(\tuple{a})$ is a smooth point of {the algebraic variety $\overline{\sigma_{r;n_1,\ldots,n_d}}$};
	\item $\Phi(\tuple{a})$ is {complex} $r$-identifiable; and
	\item $\kappa(\Phi(\tuple{a})) < \infty$.
	\end{enumerate}
	The set of $r$-nice tensors is $\Var N_{r;n_1,\ldots,n_d}:=\Phi(\Var M_{r;n_1,\ldots,n_d})$.
	\end{dfn}
	Remark that the third item in the definition is well defined because of the second item.
	\begin{prop}\label{prop:delotropaper}
	If \cref{assumpt1} holds, then the following statements are true:
	\begin{enumerate}
	\item $\Var M_{r;n_1,\ldots,n_d}$ and $\Var N_{r;n_1,\ldots,n_d}$ are smooth manifolds of dimension $r\Sigma$;
	\item $\Var{M}_{r;n_1,\ldots,n_d}$ is {Zariski-open} in $(\Var{S}_{n_1,\ldots,n_d})^{\times r}$;
	\item $\Var{N}_{r;n_1,\ldots,n_d}$ is {Zariski-open} in $\sigma_{r;n_1,\ldots,n_d}$;
	\item the addition map $\Phi{\mid_{\Var M_{r;n_1,\ldots,n_d}}}$ is a {global} diffeomorphism onto its image;
	\item $\Var N_{r;n_1,\ldots,n_d}$ is closed under multiplication by nonzero scalars; and
	\item $\Var{M}_{r;n_1,\ldots,n_d} \subset \R^{n_1 \times \cdots \times n_d} \times \cdots \times \R^{n_1 \times \cdots \times n_d}$ and $\Var{N}_{r;n_1,\ldots,n_d} \subset \R^{n_1 \times \cdots \times n_d}$ are embedded submanifolds.
	\end{enumerate}
	\end{prop}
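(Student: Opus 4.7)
The plan is to verify the six items in order, leaning on the conditions that define $\Var M_{r;n_1,\ldots,n_d}$ in \refdef{r_nice} together with the differential-geometric framework established in BBV1. First I would observe that each of the three defining conditions cuts out an open dense subset of $(\Var S_{n_1,\ldots,n_d})^{\times r}$. Condition (1) is the preimage under the continuous map $\Phi$ of the smooth locus of $\overline{\sigma_{r;n_1,\ldots,n_d}}$; the singular locus of a real algebraic variety is Zariski-closed and of strictly smaller dimension, so its preimage is a proper closed subset whose complement is Euclidean-open and dense. Condition (2) uses \cref{assumpt1}: the non-$r$-identifiable locus is Zariski-closed inside $\overline{\sigma_{r;n_1,\ldots,n_d}}$, so its preimage under $\Phi$ is likewise proper closed. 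Condition (3) is equivalent to injectivity of the derivative $\deriv{\Phi}{\tuple a}$, because the condition number measures the norm of the local inverse of $\Phi$; injectivity of a linear map with smoothly varying matrix entries is an open condition. Intersecting three open dense sets establishes item (2).

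Next, for item (4) I would apply the inverse function theorem. At any $\tuple a \in \Var M_{r;n_1,\ldots,n_d}$, the derivative $\deriv{\Phi}{\tuple a}$ is injective by condition (3), and its target is the tangent space at the smooth point $\Phi(\tuple a)$ of the $r\Sigma$-dimensional variety $\overline{\sigma_{r;n_1,\ldots,n_d}}$ by condition (1); domain and codomain thus share dimension $r\Sigma$, forcing $\Phi$ to be a local diffeomorphism onto its image. Item (1) follows immediately: $\Var M_{r;n_1,\ldots,n_d}$ is an open subset of the smooth manifold $(\Var S_{n_1,\ldots,n_d})^{\times r}$ of dimension $r\Sigma$, so it inherits the structure; and $\Var N_{r;n_1,\ldots,n_d} = \Phi(\Var M_{r;n_1,\ldots,n_d})$ inherits it via the local diffeomorphism. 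Item (3) then follows because a local diffeomorphism is an open map, so $\Var N_{r;n_1,\ldots,n_d}$ is open in $\sigma_{r;n_1,\ldots,n_d}$; density results from the fact that the complement sits inside the $\Phi$-image of the complement of an open dense set, which has lower dimension.

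For item (5), I would use that $\overline{\sigma_{r;n_1,\ldots,n_d}}$ is a cone and that $\Phi(c\tuple a) = c\,\Phi(\tuple a)$ for every nonzero $c \in \R$. Smoothness of points, identifiability and finiteness of the condition number are preserved under scaling, the last because $\deriv{\Phi}{c\tuple a}$ differs from $\deriv{\Phi}{\tuple a}$ by precomposition and postcomposition with invertible linear rescalings and so has the same rank. For item (6), $\Var M_{r;n_1,\ldots,n_d}$ is an open subset of the embedded submanifold $(\Var S_{n_1,\ldots,n_d})^{\times r} \subset (\R^{n_1 \times \cdots \times n_d})^{\times r}$, while $\Var N_{r;n_1,\ldots,n_d}$ is an open subset of the smooth locus of the real algebraic variety $\overline{\sigma_{r;n_1,\ldots,n_d}} \subset \R^{n_1 \times \cdots \times n_d}$, which is a standard embedded submanifold of the ambient Euclidean space.

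The main obstacle will be ensuring that the identifiability hypothesis from \cref{assumpt1}, which is stated over $\C$, transfers cleanly to the real setting used to define the condition number and the manifolds $\Var M_{r;n_1,\ldots,n_d}$ and $\Var N_{r;n_1,\ldots,n_d}$; for this step I would invoke the arguments of BBV1, Section 4.2, which handle precisely this passage between complex generic identifiability and the real differential-geometric structure. Everything else should then be essentially a translation of the corresponding statements in that reference.
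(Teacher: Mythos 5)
Your proposal is correct and takes essentially the same route as the paper: items (1)--(4) follow the argument of \cite[Propositions~4.5--4.7]{BBV1}, which the paper cites without repeating, and your arguments for (5) (scale-invariance of the three defining conditions) and (6) (open subsets of an embedded submanifold are embedded) mirror the paper's directly. Your write-up is more explicit on (1)--(4); the only small point you leave implicit is the density of the locus where $\deriv{\Phi}{\tuple a}$ is injective (you argue only openness for that condition), but this follows from the non-defectivity of $\sigma_{r;n_1,\ldots,n_d}$ under \cref{assumpt1} and is handled in the cited reference.
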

	\begin{proof}
	{Items 1, 2, 3, and 6 are proved as follows. Let $X_1$ and $X_2$ be respectively the set of tensors in $\sigma_{r;n_1,\ldots,n_d}$ which are not complex $r$-identifiable and which are not smooth points of $\overline{\sigma_{r;n_1,\ldots,n_d}}$. Both are Zariski-closed in $\overline{\sigma_{r;n_1,\ldots,n_d}}$ under \cref{assumpt1}, and hence so are the preimages $\Phi^{-1}(X_1)$ and $\Phi^{-1}(X_2)$. Moreover, the third defining condition of $\Var M_{r;n_1,\ldots,n_d}$ is also Zariski-closed in $(\Var{S}_{n_1,\ldots,n_d})^{\times r}$ from the explicit formula for the condition number \cref{characterization_CN} below. Hence, $\Var M_{r;n_1,\ldots,n_d}$ is Zariski-open. An open subset of an embedded submanifold is itself an embedded submanifold so the claim for $\Var M_{r;n_1,\ldots,n_d}$ is proved. Moreover, the dimension of the complement of $\Var M_{r;n_1,\ldots,n_d}$ is at most $r\Sigma-1$ and so its image by the rational map $\Phi$ is contained in an algebraic set of dimension at most $r\Sigma-1$, thus proving that $\Var{N}_{r;n_1,\ldots,n_d}$ is also Zariski-open and indeed an embedded submanifold of the set of smooth points of $\overline{\sigma_{r;n_1,\ldots,n_d}}$, which is itself an embedded submanifold of its affine ambient space, see \cite[Proposition 3.2.9]{BenedettiRisler}.}
	
	{The fourth item is due to the definition of the condition number, the fact that it is finite on~$\Var{N}_{r;n_1,\ldots,n_d}$ by \cref{r_nice}, and the injectivity of $\Phi|_{\Var{M}_{r;n_1,\ldots,n_d}}$ by \cref{r_nice} (2).}
	
	{The fifth item follows by noting that the three defining properties of $\Var{N}_{r;n_1,\ldots,n_d}$ are all true independent of a nonzero scaling.}
	\end{proof}
	
	\begin{rem}
	The definition of $r$-nice tensors in \cite[Definition 4.2]{BBV1} involves two more requirements, but those are not needed here.
	\end{rem}
	
	Since the tangent space of $\Var N_{r;n_1,\ldots,n_d}$ at a point is the image of the derivative of the {local diffeomorphism} $\Phi$, we have the following characterization:
	\begin{equation}\label{tangent_space_N}\mathrm{T}_{\tensor{A}} \,\Var N_{r,n_1,\ldots,n_d} = \mathrm{T}_{\tensor{A}_1} \,\Var S_{n_1,\ldots,n_d} + \cdots+  \mathrm{T}_{\tensor{A}_r} \,\Var S_{n_1,\ldots,n_d},\; \text{ for } \tensor{A}=\tensor{A}_1 + \cdots+ \tensor{A}_r.
	\end{equation}
	
	\subsection{Sensitivity of CPDs}
	The condition number of the problem of computing the rank-$1$ terms of a CPD of a tensor was studied in a general setting in \cite{BV2017}; the following characterization of the condition number is Theorem 1.1 of \cite{BV2017}. Let $\tensor{A}=\tensor{A}_1+\cdots+\tensor{A}_r\in \Var N_{r,n_1,\ldots,n_d}$, where the $\tensor{A}_i \in \Var{S}_{n_1,\ldots,n_d}$ are rank-$1$ tensors. For each $i$ let $U_i$ be a matrix whose columns form an orthonormal basis of $\mathrm{T}_{\tensor{A}_i} \Var S_{n_1,\ldots,n_d}$. Then,
	\begin{equation}\label{characterization_CN}
	\kappa(\tensor{A}) = \frac{1}{\varsigma_{\min}([U_1, \ldots, U_r])}.
	\end{equation}
	The matrix $U=[U_1, \ldots, U_r]\in\R^{\Pi\times r\Sigma}$ is also called a \emph{Terracini matrix}. An explicit expression for the $U_i$'s is given in \cite[equation (5.1)]{BV2017}.
	
	Since $\tensor{A}$ uniquely depends on $\tuple{a}:= (\tensor{A}_1, \ldots, \tensor{A}_r)\in \Var{S}_{n_1,\ldots,n_d}^{\times r}$, we can view the condition number of $\tensor{A}\in \Var{N}_{r,n_1,\ldots,n_d}$ as a function of $\tuple{a}$:
	\begin{equation}\label{kappa_for_output}\kappa(\tuple{a}):= \frac{1}{\varsigma_{\min}([U_1, \ldots, U_r])},\end{equation}
	where the matrices $U_i$ are as before. The benefit of \cref{kappa_for_output} is that it is well-defined for \emph{any} tuple $\tuple{a}\in \Var{S}_{n_1,\ldots,n_d}^{\times r}$ (and not just those mapping into $\Var N_{r,n_1,\ldots,n_d}$).
	
	\subsection{Integrals}
	For fixed {$t\in(0,1]$} and a point $\vect{y} \in \mathbb{S}(\R^n)$, the spherical cap of radius $t$ around $\vect{y}$ is defined as $\mathrm{cap}(\vect{y},t) := \{\vect{x} \in \mathbb{S}(\R^n): {\langle \vect{x}, \vect{y}\rangle\; >\sqrt{ 1-t^2}}\}$. {Its volume satisfies}
	\begin{equation}\label{cap_volume}
	c_1{(n)}t^{n-1}\leq \mathrm{vol}(\mathrm{cap}(\vect{y},t))\leq c_2{(n)}t^{n-1}
	\end{equation}
	for some positive constants $0<c_1{(n)}<c_2{(n)}$.
	
	The following general lemma will be useful later.
	\begin{lemma}\label{lemma_integrals}
	Let $u,v>0$ be fixed. Then,
	$0<\int_0^\infty t^u \,e^{-\frac{(t + v)^2}{2}}\,\d{} t<\infty.$
	\end{lemma}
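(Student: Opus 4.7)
The plan is to handle positivity and finiteness separately, both by elementary bounds; no deep tools are needed.

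For positivity, the integrand $f(t) := t^u e^{-(t+v)^2/2}$ is continuous and strictly positive on $(0,\infty)$ (since $u>0$ forces $t^u>0$ there, and the exponential factor never vanishes). Any continuous nonnegative function that is strictly positive on an open interval has a strictly positive Lebesgue integral, so $\int_0^\infty f(t)\,\d t > 0$.

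For finiteness, I would expand the quadratic in the exponent: since $(t+v)^2 = t^2 + 2tv + v^2$, we have
\[
 t^u e^{-(t+v)^2/2} \;=\; e^{-v^2/2}\, e^{-tv}\, t^u e^{-t^2/2} \;\le\; e^{-v^2/2}\, t^u e^{-t^2/2}
\]
for all $t\ge 0$, where the inequality uses $e^{-tv}\le 1$. Hence
\[
 \int_0^\infty t^u e^{-(t+v)^2/2}\,\d t \;\le\; e^{-v^2/2} \int_0^\infty t^u e^{-t^2/2}\,\d t.
\]
The remaining integral is a standard Gaussian moment, which is integrable at $0$ (because $u>0>-1$ makes $t^u$ locally integrable) and decays super-exponentially at infinity; its value is $2^{(u-1)/2}\Gamma\bigl(\tfrac{u+1}{2}\bigr)$, which is finite for every $u>0$.

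There is essentially no obstacle: the only thing one must check is the integrability at the endpoints, and both are immediate. One could equivalently perform the substitution $s = t + v$ to rewrite the integral as $\int_v^\infty (s-v)^u e^{-s^2/2}\,\d s$ and then use $(s-v)^u \le s^u$ on $[v,\infty)$ together with the same Gaussian moment bound; this variant makes the reduction to a Gaussian moment even more transparent.
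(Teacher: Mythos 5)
Your proof is correct and follows essentially the same route as the paper: dropping the cross term $2tv$ (equivalently, using $e^{-tv}\le 1$) to reduce the integral to the Gaussian moment $e^{-v^2/2}\int_0^\infty t^u e^{-t^2/2}\,\d t = e^{-v^2/2}\,2^{(u-1)/2}\Gamma\bigl(\tfrac{u+1}{2}\bigr)$. The positivity argument via strict positivity of the continuous integrand on $(0,\infty)$ is also standard and matches the paper's (terser) remark.
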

	\begin{proof}
	It is clear that the integral is not zero. Furthermore,
	since $(t + v)^2 > t^2+v^2$ for $t,v>0$, we see that
	$\int_0^\infty t^u \,e^{-\frac{(t + v)^2}{2}}\,\d{} t \leq \int_0^\infty t^u \,e^{-\frac{t^2 + v^2}{2}}\,\d{} t = e^{-\frac{v^2}{2}}\sqrt{2}^{u-1}\Gamma(\frac{u+1}{2})$, which is finite.
	\end{proof}
	
	\subsection{The coarea formula}
	Let $\Var M$ and $\Var N$ be submanifolds of $\R^n$ of equal dimension, and let $F: \Var M \to \Var N$ be a smooth surjective map.
	A point $y\in \Var N$ is called a \emph{regular value} of $F$ if for all points $x\in F^{-1}(y)$ the differential $\deriv{F}{x}$ is of full rank. The preimage $F^{-1}(y)$ of a regular value $y$ is a {discrete} set of points. {Let $|F^{-1}(y)|$ be the number of elements in this preimage.} Then, the coarea formula \cite{howard} states that for every integrable function $g$ we have
	\begin{align}\label{eqn_coarea}
	 \int_{\Var{N}} |F^{-1}(y)| \, g(y) \,\d{}y = \int_{\Var{M}} \mathrm{Jac}(F)(x) \, g(F(x)) \,\d{}x,
	\end{align}
	where $\mathrm{Jac}(F)(x):=\vert\det\deriv{F}{x}\vert$ is the \emph{Jacobian determinant} of $F$ at $x$.
	Note that almost all $y\in \Var N$ are regular values of $F$ by Sard's theorem \cite[Theorem 6.10]{Lee2013}. Hence, integrating over~$\Var N$ is the same as integrating over all regular values of $F$.
	\begin{rem}
	In \cite{howard}, the coarea formula is given in the more general case when $\dim \Var M \geq \dim \Var N$. In this article we only need the case when the dimension of $\dim \Var M$ and $\dim \Var N$ coincide. Moreover, if $F$ is injective, then \cref{eqn_coarea} reduces to the well-known change-of-variables formula.
	\end{rem}
	
	\section{The average condition number of Gaussian tensors of rank two}\label{sec:proof_main}
	The goal of this section is to prove \cref{thm_main}. We will proceed in three steps. First, the $2$-nice tensors are conveniently parameterized via elementary manifolds such as one-dimensional intervals and spheres in \cref{sec_step1_parameterization}. Second, the Jacobian determinant of this map is computed in \cref{sec_jacobian_determinant}. Third, the integral can be bounded from below with the help of a few technical auxiliary lemmas in \cref{sec:kaffexp}. In the next section, we will exploit \cref{thm_main} for generalizing the argument to most higher ranks. To simplify notation, in this section we let
	\[
	\Var S:= \Var S_{n_1,\ldots,n_d},\;\sigma_2:=\sigma_{2,n_1,\ldots,n_d},\;\Var N_2:=\Var N_{2,n_1,\ldots,n_d} \;\text{and}\;\Var M_2:=\Var M_{2,n_1,\ldots,n_d}.
	\]

	\subsection{Parameterizing $2$-nice tensors} \label{sec_step1_parameterization}
	Let
	\begin{equation}\label{def_P}
	\Var{P}:=\Sp{}(\R^{n_1}) \times \cdots \times \Sp{}(\R^{n_d})
	\end{equation}
	and consider the next parametrization of the Segre manifold:
	\begin{equation}\label{def_psi}
	\psi: (0, \infty) \times \Var P \to \Var S,\; (\lambda, \vect{u}^1,\ldots,\vect{u}^d) \mapsto \lambda \cdot \vect{u}^1 \otimes \cdots\otimes \vect{u}^d.
	\end{equation}
	The preimage of $\tensor{A}\in \Var S$ has cardinality $\vert \psi^{-1}(\tensor{A})\vert = 2^{d-1}$. By composing $\Psi := \psi \times \psi$ with the addition map from \cref{def_Phi} we get the following alternative representation of {tensors of rank bounded by $2$}:
	\begin{equation*} 
	(\Phi\circ \Psi)((\lambda, \vect{u}^1,\ldots,\vect{u}^d), (\mu, \vect{v}^1,\ldots,\vect{v}^d)) =  \lambda \cdot  \vect{u}^1 \otimes \cdots\otimes \vect{u}^d + \mu \cdot  \vect{v}^1 \otimes \cdots\otimes \vect{v}^d.
	\end{equation*}
	We would like to apply the coarea formula \cref{eqn_coarea} to pull back the integral of $\kappa(\tensor{A})e^{-\frac{\Vert \tensor{A}\Vert}{2}}$ over~$\sigma_{2}$ via the parametrization $\Phi\circ\Psi$. However, $\sigma_{2}$ in general is not a manifold, so the formula does not apply. Nevertheless, we can use the manifold $\Var N_{2}$ of $2$-nice tensors instead.  By \cref{prop:delotropaper}~(3), $\Var N_2$ is {Zariski open} in $\sigma_2$, so that
		\[
		\mean \kappa(\tensor{A}) = \frac{1}{C_2}\int_{\sigma_{2}} \kappa(\tensor{A}) e^{-\frac{\Vert \tensor{A}\Vert^2}{2}}\, \d{}\tensor{A} = \frac{1}{C_2}\int_{\Var N_{2}} \kappa(\tensor{A}) e^{-\frac{\Vert \tensor{A}\Vert^2}{2}}\, \d{}\tensor{A},
		\]
	where $C_2 := C_{2;n_1,\ldots,n_d}$ {is as in \cref{def_gaussian_tensor}}.
	By applying the coarea formula \cref{eqn_coarea} to the smooth map $\Phi\mid_{\Var M_2}$ we get
	\begin{align*}
	\int_{\Var N_{2}} \kappa(\tensor{A})\, e^{-\frac{\Vert \tensor{A}\Vert^2}{2}}\, \d{}\tensor{A} &=\frac{1}{2}\int_{\Var N_{2}} \vert\Phi^{-1}(\tensor{A})\vert\,\kappa(\tensor{A})\, e^{-\frac{\Vert \tensor{A}\Vert^2}{2}}\, \d{}\tensor{A}\\
	&= \frac{1}{2}\int_{\Var M_{2}} \mathrm{Jac}(\Phi)(\tensor{A}_1,\tensor{A}_2)\,{\kappa(\tensor{A}_1+ \tensor{A}_2)}\,  e^{-\frac{\Vert \tensor{A}_1+\tensor{A}_2 \Vert^2}{2}}\, \d{}\tensor{A}_1 \d{}\tensor{A}_2,
	\end{align*}
	where $\mathrm{Jac}(\Phi)(\tensor{A}_1,\tensor{A}_1)$ is the Jacobian determinant of $\Phi$ at $(\tensor{A}_1,\tensor{A}_1)$.
	In the first equality we used $|\Phi^{-1}(\tensor{A})| = 2$ for $2$-identifiable tensors; indeed, we have that $\Phi(\tensor{A}_1,\tensor{A}_2) = \Phi(\tensor{A}_2,\tensor{A}_1) = \tensor{A}$ and~$\tensor{A}_1 \ne \tensor{A}_2$ because $\tensor{A}\in\Var{N}_2$ has rank equal to $2$.
	
	In the following, we switch to the notation from \cref{kappa_for_output}: $\kappa(\tensor{A}_1 + \tensor{A}_2) = \kappa(\tensor{A}_1, \tensor{A}_2)$.
	Since $\Var M_{2}$ is {also Zariski open} in $\Var S \times \Var S$ by \cref{prop:delotropaper} (2), we may replace the integral over $\cal{M}_{2}$ by an integral over $\Var S \times \Var S$, thus obtaining
	\[
	\mean \kappa(\tensor{A})
	= \frac{1}{2C_2} \int_{\Var S \times \Var S} \mathrm{Jac}(\Phi)(\tensor{A}_1,\tensor{A}_1)\,\kappa(\tensor{A}_1,\tensor{A}_2)\,  e^{-\frac{\Vert \tensor{A}_1+\tensor{A}_2 \Vert^2}{2}}\, \d{}\tensor{A}_1\d{}\tensor{A}_2.
	\]
	We use the coarea formula again, but this time for $\Psi = \psi\times\psi$, where $\psi$ is the parametrization from \cref{def_psi}. Note that for $(\tensor{A}_1,\tensor{A}_2)\in\Var M_{2}$ we have $\vert\Psi^{-1}(\tensor{A}_1,\tensor{A}_2)\vert = 2^{2d-2}$. We get
	{\small{
	\begin{align}
	\nonumber
	\mean \kappa(\tensor{A}) = &\frac{1}{2C_2}\int_{\Var S\times \Var S} \mathrm{Jac}(\Phi)(\tensor{A}_1,\tensor{A}_2)\,{\kappa(\tensor{A}_1, \tensor{A}_2)}\,  e^{-\frac{\Vert \tensor{A}_1+\tensor{A}_2 \Vert^2}{2}}\, \d{}\tensor{A}_1\d{}\tensor{A}_2 \\
	\nonumber
	=&\frac{1}{2^{2d-1}C_2}\int_{\Var S\times \Var S}\vert \Psi^{-1}(\tensor{A}_1,\tensor{A}_2)\vert\, \mathrm{Jac}(\Phi)(\tensor{A}_1,\tensor{A}_2)\, {\kappa(\tensor{A}_1, \tensor{A}_2)}\,  e^{-\frac{\Vert \tensor{A}_1+\tensor{A}_2\Vert^2}{2}}\, \d{}\tensor{A}_1 \d{}\tensor{A}_2\\
	= &\frac{1}{2^{2d-1}C_2}\int_{((0,\infty) \times \Var{P})^{\times 2}} \mathrm{Jac}(\Phi\circ\Psi)(\tuple{a},\tuple{b})\,{\kappa(\psi(\tuple{a}), \psi(\tuple{b}))}\,  e^{-\frac{\Vert \psi(\tuple{a}) + \psi(\tuple{b}) \Vert^2}{2}}\, \d{}\tuple{a}\d{}\tuple{b},\label{eq3.8}
	\end{align}
	}}
	where $\tuple{a} = (\lambda,\vect{u}^1,\ldots,\vect{u}^d)$ and $\tuple{b} = (\mu,\vect{v}^1,\ldots,\vect{v}^d)$ are both tuples in $(0,\infty) \times \Var{P}$. Next, we compute the Jacobian determinant $\mathrm{Jac}(\Phi\circ\Psi)(\tuple{a},\tuple{b})$.
	
	\subsection{Computing the Jacobian determinant}\label{sec_jacobian_determinant}
	Note that the dimension of the domain of $\Phi\circ \Psi$ is equal to $2\Sigma$.
	As above, let
	\(
	\tuple{a} = (\lambda,\vect{u}^1,\ldots,\vect{u}^d)\)
	and \(\tuple{b} = (\mu,\vect{v}^1,\ldots,\vect{v}^d)
	\)
	be tuples in $(0,\infty) \times \Var{P}$ with $\Var{P}$ as in \cref{def_P}. In the following, we write
	\[
	\tensor{U}:=\vect{u}^1\otimes \cdots \otimes\vect{u}^d \quad\text{ and }\quad\tensor{V}:=\vect{v}^1\otimes \cdots \otimes\vect{v}^d.
	\]
	The Jacobian determinant of $\Phi\circ \Psi$ at $(\tuple{a},\tuple{b})$ is, by definition, the absolute value of the determinant of the linear map
	\[
	\deriv{(\Phi\circ \Psi)}{(\tuple{a},\tuple{b})} : \Tang{\lambda}{(0,\infty)} \times \Tang{\mu}{(0,\infty)} \times \Tang{(\vect{u}^1,\ldots,\vect{u}^d)}{\Var{P}}  \times \Tang{(\vect{v}^1,\ldots,\vect{v}^d)}{\Var{P}} \to \Tang{\lambda \tensor{U} + \mu \tensor{V}}{\Var N_2}.
	\]
	Consider the matrix of partial derivatives of $\Phi\circ \Psi$ with respect to the standard orthonormal basis of $\R^{n_1\times \cdots \times n_d}$:
	\begin{align} \label{eqn_def_Q}
	 Q := \begin{bmatrix} L & M \end{bmatrix}\in \R^{\Pi \times 2\Sigma},
	\end{align}
	where
	\begin{equation} \label{def_L_and_M}
	L :=  \begin{bmatrix} \frac{\partial(\Phi\circ \Psi)}{\partial \vect{u}^1} & \ldots &\frac{\partial(\Phi\circ \Psi)}{\partial \vect{u}^d} & \frac{\partial(\Phi\circ \Psi)}{\partial \vect{v}^1} & \ldots &\frac{\partial(\Phi\circ \Psi)}{\partial \vect{v}^d}\end{bmatrix}
	 \quad\text{and}\quad
	 M:= \begin{bmatrix} \frac{\partial(\Phi\circ \Psi)}{\partial \lambda} & \frac{\partial(\Phi\circ \Psi)}{\partial \mu} \end{bmatrix}.
	\end{equation}
	{Then, the Jacobian determinant of $\Phi\circ \Psi$ at $(\tuple{a},\tuple{b})$ is
	 \begin{equation}\label{def_vol}
	 \Jac{\Phi\circ \Psi}{\tuple{a},\tuple{b}} = \sqrt{\det(Q^TQ)}.
	 \end{equation}
	The latter is the volume of the parallelepiped spanned by the columns of $Q$. We fix notation in the next definition.}
	\begin{dfn}\label{def_vol_U}
	{Let $N{\,\geq\,}n$ be positive integers and $U\in\mathbb R^{N\times n}$ be a matrix with columns $\vect{u}_1,\ldots,\vect{u}_n\in\mathbb R^N$. We denote by $\mathrm{vol}(U)$ the volume of the parallelepiped spanned by the $\vect{u}_i$:
	\[
	\mathrm{vol}(U) := \sqrt{\det(U^TU)}.
	\]}
	\end{dfn}
	
	{We can now rewrite \cref{def_vol} as
	 \begin{equation*}
	 \Jac{\Phi\circ \Psi}{\tuple{a},\tuple{b}} =\mathrm{vol}(Q).
	 \end{equation*}}
	The reason why we write the partial derivatives of $\Phi\circ\Psi $ with respect to the standard basis of $\R^{n_1\times \cdots\times n_d}$ is that we get the following convenient description:
	\begin{align}\label{def_M}
	M =
	\begin{bmatrix}
	 \tensor{U} & \tensor{V}
	\end{bmatrix}.
	\end{align}
	For describing $L$, let for each $1\le k \le d$,
	\[
	\dot{U}^k =
	\begin{bmatrix}
	\sten{\dot{u}}{2}{k} & \sten{\dot{u}}{3}{k} & \cdots & \sten{\dot{u}}{n_k}{k}
	\end{bmatrix} \in \R^{n_k \times {(}n_k-1{)}}
	\;\text{and}\;
	\dot{V}^k =
	\begin{bmatrix}
	\sten{\dot{v}}{2}{k} & \sten{\dot{v}}{3}{k} & \cdots & \sten{\dot{v}}{n_k}{k}
	\end{bmatrix} \in \R^{n_k \times {(}n_k-1{)}}
	\]
	be matrices containing as columns an ordered orthonormal basis of $(\vect{u}^k)^\perp = \Tang{\vect{u}^k}{\Sp(\R^{n_k})}$ and $(\vect{v}^k)^\perp = \Tang{\vect{v}^k}{\Sp(\R^{n_k})}$, respectively. Then, by linearity and the product rule of differentiation, we have that $L = \begin{bmatrix} \lambda L_1 &
	\mu L_2 \end{bmatrix}$ is the block matrix consisting of $2$ blocks of the form
	\begin{align}\label{def_L_i}
	L_1 =  \begin{bmatrix} L_1^1 & \cdots & L_1^d \end{bmatrix}
	\text{ with }
	L_1^k &= \sten{u}{}{1} \otimes \cdots \otimes \sten{u}{}{k-1} \otimes \dot{U}^k \otimes \sten{u}{}{k+1} \otimes \cdots \otimes \sten{u}{}{d} \text{ and }\\
	L_2 = \begin{bmatrix} L_2^1 & \cdots & L_2^d \end{bmatrix}
	\text{ with }
	L_2^k &= \sten{v}{}{1} \otimes \cdots \otimes \sten{v}{}{k-1} \otimes \dot{V}^k \otimes \sten{v}{}{k+1} \otimes \cdots \otimes \sten{v}{}{d}.\nonumber
	\end{align}
	Both $L_1$ and $L_2$ have $\sum_{k=1}^d (n_k-1) = \Sigma-1$ columns.
	Note that $M$ depends only on the $\vect{u}^k$'s and $\vect{v}^k$'s, whereas $L$ also depends on the parameters $\lambda$ and $\mu$; we do not emphasize these dependencies in the notation.
	
	Comparing with \cite[equation (5.1)]{BV2017}, we see that the matrix $L_1$ has as columns an orthonormal basis for the orthogonal complement of $\tensor{U}$ in $\Tang{\tensor{U}}{\Var S}$. Analogously, the columns of $L_2$ form an orthonormal basis for the orthogonal complement of
	$\tensor{V}$ in $\Tang{\tensor{V}}{\Var S}$.
	Consequently, for $\Psi(\tuple{a},\tuple{b})$, Terracini's matrix from \cref{characterization_CN} can be chosen as
	\begin{align} \label{eqn_nice_expr_terracini}
	 U = \begin{bmatrix} \tensor{U} & L_1 & \tensor{V} & L_2 \end{bmatrix}.
	\end{align}
	This entails that
	\begin{align} \nonumber
	\Jac{\Phi\circ \Psi}{\tuple{a},\tuple{b}} &= \sqrt{\det\bigl( \begin{bmatrix} \lambda L_1 & \mu L_2 & \tensor{U} & \tensor{V} \end{bmatrix}^T \begin{bmatrix} \lambda L_1 & \mu L_2 & \tensor{U} & \tensor{V} \end{bmatrix} \bigr)}
	\end{align}
	and so 
	\begin{equation}\label{eqn_nice_expr_jacdet}
	\Jac{\Phi\circ \Psi}{\tuple{a},\tuple{b}} = \lambda^{\Sigma-1} \mu^{\Sigma-1} \mathrm{vol}(U)
	\end{equation}
	having used {the notation from \cref{def_vol_U}} and {the fact} that singular values are invariant under orthogonal transformations such as permutations of columns.

	\subsection{Bounding the integral}\label{sec:kaffexp}
	We are now ready to conclude the proof of \cref{thm_main}, by showing that the expected value of the condition number of tensor rank decomposition is {bounded from below by infinity}.
	
	By \cref{characterization_CN}, the condition number at $\tensor{A} = \tensor{A}_1+\tensor{A}_2 = \Phi(\tensor{A}_1,\tensor{A}_2) \in \Var{N}_2$ is the inverse of the smallest singular value of the Terracini's matrix $U$ from \cref{eqn_nice_expr_terracini}. Therefore, if we plug~\cref{eqn_nice_expr_terracini} and~\cref{eqn_nice_expr_jacdet} into~\cref{eq3.8}, then we get
	\begin{align} \nonumber 
	\mathbb{E}\kappa(\tensor{A})
	&= \frac{1}{2^{2d-1}C_2}\int_{((0,\infty) \times \Var{P})^{\times 2}}\frac{\lambda^{\Sigma-1} \mu^{\Sigma-1} \mathrm{vol}(U)}{\varsigma_{\min}(U)}\,  e^{-\frac{\Vert \lambda  \tensor{U} + \mu  \tensor{V}\Vert^2}{2}}\, \d{}\lambda\,\d{}\tuple{u}\,\d{}\mu\,\d{}\tuple{v} \\
	&= \frac{1}{2^{2d-1}C_2} \int_{((0,\infty) \times \Var{P})^{\times 2}} \lambda^{\Sigma-1}\,\mu^{\Sigma-1}\,q(U)\, e^{-\frac{\Vert \lambda \tensor{U} + \mu \tensor{V}\Vert^2}{2}}\, \d{}\lambda\,\d{}\tuple{u}\,\d{}\mu\,\d{}\tuple{v}, \label{eq3.11}
	\end{align}
	where
	\(
	{q(U) = \frac{\mathrm{vol}(U)}{\varsigma_{\min}(U)}}
	\) is as in \cref{def_q}, and
	\begin{equation}\label{def_U_V}
	\tuple{u} = (\sten{u}{}{1},\ldots, \sten{u}{}{d}), \;
	\tuple{v} = (\sten{v}{}{1},\ldots, \sten{v}{}{d}), \;
	\tensor{U}= \sten{u}{}{1} \otimes \cdots \otimes \sten{u}{}{d},\; \text{ and }
	\tensor{V}= \sten{v}{}{1} \otimes \cdots \otimes \sten{v}{}{d}.
	\end{equation}
	From \cref{eqn_nice_expr_terracini} it is clear that $U$ is a function of $\tuple{u}$ and $\tuple{v}$ but is independent of $\lambda$ and $\mu$. Therefore, if we integrate first over $\lambda$ and $\mu$, then we can ignore the factor $q(U)$. In \cref{proof_lem1} we compute this integral; the result is stated here as the next lemma.
	
	\begin{lemma}\label{lem:innerBV}
	Let $(\vect{u}^1,\ldots,\vect{u}^d), (\vect{v}^1,\ldots,\vect{v}^d) \in \Var{P}$ be fixed. Then,
		\begin{multline*}
		\int_{(0,\infty)}\int_{(0,\infty)} \lambda^{\Sigma-1}\,\mu^{\Sigma-1}\,e^{-\frac{\Vert \lambda \tensor{U} + \mu \tensor{V}\Vert^2}{2}}\,\d{}\lambda\d{}\mu
		=\\ 2^{\Sigma -1}\Gamma(\Sigma) \int_{0}^{\frac{\pi}{2}}\frac{(\cos(\theta)\sin(\theta))^{\Sigma-1}}{{\|\cos(\theta) \tensor{U}+ \sin(\theta)\tensor{V}\|^{2\Sigma}}} \,\d{}\theta,
		\end{multline*}
	where $\tensor{U} = \vect{u}^1 \otimes \cdots \otimes \vect{u}^d$ and $\tensor{V} = \vect{v}^1 \otimes \cdots \otimes \vect{v}^d$.
	\end{lemma}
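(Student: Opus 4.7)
The plan is to evaluate the double integral by passing to polar coordinates in the $(\lambda,\mu)$-plane and then reducing the radial integral to the Gamma function. Since both sides are allowed to be $+\infty$, no delicate integrability discussion is needed; I only need a formal identity valid a.e.\ in $\theta$.

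First I introduce polar coordinates $\lambda = r\cos\theta$, $\mu = r\sin\theta$ on $(0,\infty)^2$, with $(r,\theta) \in (0,\infty)\times (0,\pi/2)$ and Jacobian $r$. Writing $\alpha(\theta) := \|\cos(\theta)\,\tensor{U} + \sin(\theta)\,\tensor{V}\|$, the bilinearity of the tensor sum gives $\|\lambda\tensor{U}+\mu\tensor{V}\|^2 = r^2\alpha(\theta)^2$, and the power factor becomes $\lambda^{\Sigma-1}\mu^{\Sigma-1} = r^{2(\Sigma-1)}(\cos\theta\sin\theta)^{\Sigma-1}$. By Tonelli's theorem for the nonnegative integrand, the original integral equals
\[
\int_{0}^{\pi/2} (\cos\theta\sin\theta)^{\Sigma-1} \left( \int_{0}^{\infty} r^{2\Sigma-1}\, e^{-r^{2}\alpha(\theta)^{2}/2}\, \d r \right) \d\theta .
\]

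Second I evaluate the inner radial integral for each fixed $\theta$. If $\alpha(\theta)>0$, the substitution $t = r^{2}\alpha(\theta)^{2}/2$, so $r = (2t/\alpha(\theta)^2)^{1/2}$ and $\d t = r\alpha(\theta)^{2}\,\d r$, transforms the inner integral into
\[
\int_{0}^{\infty} \Bigl(\tfrac{2t}{\alpha(\theta)^{2}}\Bigr)^{\Sigma-1} \frac{e^{-t}}{\alpha(\theta)^{2}}\,\d t \;=\; \frac{2^{\Sigma-1}}{\alpha(\theta)^{2\Sigma}}\int_{0}^{\infty} t^{\Sigma-1}e^{-t}\,\d t \;=\; \frac{2^{\Sigma-1}\Gamma(\Sigma)}{\alpha(\theta)^{2\Sigma}} .
\]
If $\alpha(\theta)=0$, which can only occur if $\tensor{V} = -\tensor{U}$ and $\theta = \pi/4$, the inner radial integral is $+\infty$; this exceptional $\theta$ lies in a null set so it does not affect the outer integral (and the right-hand side of the claimed identity is then also $+\infty$, so the equation still holds in $[0,+\infty]$).

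Substituting this evaluation back and pulling the constant $2^{\Sigma-1}\Gamma(\Sigma)$ outside yields
\[
\int_{0}^{\pi/2} (\cos\theta\sin\theta)^{\Sigma-1}\,\frac{2^{\Sigma-1}\Gamma(\Sigma)}{\|\cos(\theta)\tensor{U}+\sin(\theta)\tensor{V}\|^{2\Sigma}}\,\d\theta,
\]
which is the right-hand side of the lemma. There is no real obstacle in this argument: the only point deserving mention is that the polar substitution maps the open first quadrant $(0,\infty)^{2}$ bijectively onto $(0,\infty)\times(0,\pi/2)$, and that the Gamma identity $\int_{0}^{\infty} t^{\Sigma-1}e^{-t}\d t = \Gamma(\Sigma)$ applies for the integer value $\Sigma \ge 1$.
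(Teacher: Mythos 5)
Your proof is correct and follows essentially the same route as the paper's: pass to polar coordinates $(\lambda,\mu)=(r\cos\theta,r\sin\theta)$, pull out $(\cos\theta\sin\theta)^{\Sigma-1}$, and reduce the radial integral to $\Gamma(\Sigma)$ by a change of variables. The only cosmetic difference is that the paper substitutes $t=\rho\,\alpha(\theta)$ and then recognizes the Gaussian moment $\int_0^\infty t^{2\Sigma-1}e^{-t^2/2}\,\d t = 2^{\Sigma-1}\Gamma(\Sigma)$, whereas you substitute $t=r^2\alpha(\theta)^2/2$ directly into the Gamma integral; your remark on the null set where $\alpha(\theta)=0$ is a sensible extra that the paper omits silently.
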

	The foregoing integral can be bounded from below by exploiting the next lemma, which is proved in \cref{proof_lem2}.
	\begin{lemma}\label{lem:integralbound}
	{Let $\vect{x},\vect{y}\in \Sp(\R^p)$ be two unit-norm} vectors and $s\geq 1$. Then, there exists a constant $k=k(p,s)$ independent of $\vect{x},\vect{y}$ such that
	\[
	\int_0^{\frac{\pi}{2}}\frac{(\cos(\theta) \sin(\theta) )^{s-1}}{\|\cos(\theta)\vect{x}+\sin(\theta)\vect{y}\|^{2s}}\,\d{}\theta\geq\frac{k}{{\|\vect{x} + \vect{y}\|^{2s-1}}}.
	\]
	\end{lemma}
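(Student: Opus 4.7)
The first observation is that both sides of the inequality depend on the pair $(\vect{x},\vect{y})\in \Sp(\R^p)\times\Sp(\R^p)$ only through their inner product $c := \langle \vect{x},\vect{y}\rangle \in [-1,1]$. Indeed, $\|\cos\theta\,\vect{x} + \sin\theta\,\vect{y}\|^2 = 1 + c\sin(2\theta)$ and $\|\vect{x}+\vect{y}\|^2 = 2(1+c)$, so the ambient dimension $p$ plays no role. It therefore suffices to establish the one-variable inequality
\[
I(c) \;:=\; \int_0^{\pi/2}\frac{(\cos\theta\sin\theta)^{s-1}}{(1+c\sin(2\theta))^s}\,\d\theta \;\ge\; \frac{k}{(2(1+c))^{s-1/2}},\qquad c\in(-1,1).
\]
Using $\cos\theta\sin\theta = \sin(2\theta)/2$, the substitution $u = \sin(2\theta)$, and the symmetry $\theta\leftrightarrow \pi/2-\theta$, I would rewrite $I(c)$ as
\[
I(c) \;=\; \frac{1}{2^{s-1}}\int_0^1 \frac{u^{s-1}}{(1+uc)^s\sqrt{1-u^2}}\,\d u,
\]
reducing the claim to a standard real-variable estimate in which the only remaining parameter is $c$.

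I would then split the analysis according to $\epsilon := 1+c$. In the easy regime $\epsilon \ge 1/2$, one has the uniform bound $1+uc \le 2$ on $u\in[0,1]$, so $I(c)$ exceeds a positive constant depending only on $s$, while the right-hand side $k(2\epsilon)^{1/2-s}$ is uniformly bounded above by $k\cdot 4^{1/2-s}$; choosing $k=k(s)$ small enough settles this case. The delicate case is $\epsilon < 1/2$, in which $\vect x$ and $\vect y$ are nearly antipodal and both sides of the inequality blow up as $\epsilon\to 0$. Restricting the integral to $u\in [1/2,1]$ and substituting $v = 1-u$, one obtains the estimates $\sqrt{1-u^2}\le \sqrt{2v}$ and $1+uc = \epsilon + (1-\epsilon)v \le \epsilon + v$, which together yield
\[
I(c) \;\ge\; \frac{2^{1-2s}}{\sqrt{2}}\int_0^{1/2}\frac{\d v}{(\epsilon+v)^s\sqrt{v}}.
\]
The scale-exposing substitution $v = \epsilon t$ then factors out exactly $\epsilon^{1/2-s}$ from this integral, leaving behind $\int_0^{1/(2\epsilon)} \d t/((1+t)^s\sqrt{t})$, which is bounded below uniformly in $\epsilon\in(0,1/2)$ by the finite positive number $\int_0^{1}\d t/((1+t)^s\sqrt{t})$.

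The main obstacle is the antipodal regime $c\to -1$: the right-hand side diverges like $(1+c)^{1/2-s}$, so $I(c)$ must produce exactly that singular rate. The rescaling $v = \epsilon t$ is precisely what makes this matching visible, and the convergence of the tail integral $\int_0^\infty \d t/((1+t)^s\sqrt{t})$ -- which requires $s>1/2$ and hence is valid under the hypothesis $s\ge 1$ -- is what guarantees a uniform positive constant after rescaling. Combining the two cases produces a single $k=k(s)$, which is in fact independent of $p$, and completes the proof.
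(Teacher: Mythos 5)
Your proof is correct and takes a genuinely cleaner route than the paper's. The key structural move you make early on---that both sides of the inequality depend on the pair $(\vect{x},\vect{y})$ only through $c=\langle\vect{x},\vect{y}\rangle$, because $\|\cos\theta\,\vect{x}+\sin\theta\,\vect{y}\|^2=1+c\sin(2\theta)$ and $\|\vect{x}+\vect{y}\|^2=2(1+c)$---collapses the problem to a one-variable calculus estimate, and in particular makes it manifest that the constant can be chosen independent of $p$. The paper's proof instead works with the $p\times 2$ matrix $V=[\vect{x},\vect{y}]$ and the arc parametrization $\vect{a}=(\cos\theta,\sin\theta)$, bounding $\|V\vect{a}\|$ by triangle inequalities and then restricting to the segment of the circle near $\vect{e}=(1,1)^T/\sqrt{2}$; the underlying idea---localize near $\theta=\pi/4$, where the denominator can degenerate as $\vect{y}\to-\vect{x}$, and extract the rate $\|\vect{x}+\vect{y}\|^{1-2s}$ by an explicit $1$-dimensional integral---is the same, but your reduction to $I(c)$, the substitution $u=\sin(2\theta)$, and the rescaling $v=\epsilon t$ make the extraction of the power $\epsilon^{1/2-s}$ more systematic. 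Two small cosmetic points: for $\epsilon\ge1/2$ the factor $(2\epsilon)^{1/2-s}$ is bounded above by $1$ (not by $4^{1/2-s}$, which is actually its lower bound on that range), and the prefactor in the near-antipodal case should read $2^{2-2s}/\sqrt{2}$ rather than $2^{1-2s}/\sqrt{2}$; neither slip affects the argument, since in both places only positivity and uniform boundedness of a constant are used.
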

	Combining the foregoing lemmata and plugging the result into \cref{eq3.11}, we obtain
	\begin{equation*}
	\mathbb{E}\kappa(\tensor{A})\geq
	\frac{2^{\Sigma -1}\Gamma(\Sigma) \, k}{2^{2d-1}C_2} \int_{\Var{P} \times \Var{P}}\,\frac{q(U)}{\|\tensor{U} + \tensor{V}\|^{2\Sigma-1}} \, \d{}\tuple{u}\,\d{}\tuple{v}.
	\end{equation*}
	Next, we exploit the symmetry of the domain $\mathbb{S}(\R^{n_1})$ by flipping the sign of $\vect{v}^1$ and, hence, of $\tensor{V}=\vect{v}^1\otimes\cdots\otimes\vect{v}^d$. This substitution transforms $U$ into $UD$, where $D$ is a diagonal matrix with some pattern of $\pm 1$ on the diagonal. Since $D$ is orthogonal, $q(U) = q(UD)$, so that
	\begin{equation*}
	\mathbb{E}\kappa(\tensor{A})\geq
	\frac{2^{\Sigma -1}\Gamma(\Sigma) \, k}{2^{2d-1}C_2} \int_{\Var{P} \times \Var{P}}\,\frac{q(U)}{\|\tensor{U} - \tensor{V}\|^{2\Sigma-1}} \, \d{}\tuple{u}\,\d{}\tuple{v}.
	\end{equation*}
	Denote this last integral by $J$, and then it remains to show that $J=\infty$.
	Consider the open set
	{\small{
	\begin{align*}
		D(\epsilon)=\Big\{&(\tuple{u},\tuple{v}) \in \Var{P} \times \Var{P} \mid \tfrac{9}{10}\|\vect{u}^1-\vect{v}^1\| < \|\vect{u}^k-\vect{v}^k\| < \|\vect{u}^1-\vect{v}^1\| < \epsilon \text { for } 2\leq k\leq d\Big\}.
	\end{align*}
	}}
	Since $D(\epsilon)$ is open, we have
	\begin{equation}\label{eq3.30}
	J\geq  \int_{D(\epsilon)}\,\frac{q(U)}{\|\tensor{U}-\tensor{V}\|^{2\Sigma-1}} \, \, \d{}\tuple{u}\,\d{}\tuple{v}.
	\end{equation}
	
	We now need two lemmata. The first one is straightforward.
	\begin{lemma}\label{bound_tensor_norm}
	Let $\epsilon>0$ be sufficiently small. For all $(\tuple{u},\tuple{v})\in D(\epsilon)$ with $\tuple{u}=(\vect{u}^1,\ldots,\vect{u}^d)$ and $\tuple{v}=(\vect{v}^1,\ldots,\vect{v}^d)$, we have
	\[
	\| \vect{u}^1 - \vect{v}^1 \| \le \|\tensor{U}-\tensor{V} \| \leq d \, \Vert \vect{u}^1-\vect{v}^1\Vert,
	\]
	where $\tensor{U}=\vect{u}^1\otimes\cdots\otimes\vect{u}^d$ and $\tensor{V} = \vect{v}^1\otimes\cdots\otimes\vect{v}^d$.
	\end{lemma}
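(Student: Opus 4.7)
The proof splits naturally into an upper bound and a lower bound, neither of which requires the defining inequality $\tfrac{9}{10}\|\vect{u}^1-\vect{v}^1\| < \|\vect{u}^k-\vect{v}^k\|$ on $D(\epsilon)$; only the upper constraint $\|\vect{u}^k - \vect{v}^k\| < \|\vect{u}^1 - \vect{v}^1\| < \epsilon$ will be used.

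For the upper bound, the plan is to insert a telescoping identity,
\[
\tensor{U} - \tensor{V}
= \sum_{k=1}^d \vect{v}^1 \otimes \cdots \otimes \vect{v}^{k-1} \otimes (\vect{u}^k - \vect{v}^k) \otimes \vect{u}^{k+1} \otimes \cdots \otimes \vect{u}^d,
\]
and apply the rank-one inner product formula \cref{inner_prod_rank_one} to conclude that the norm of the $k$th summand is exactly $\|\vect{u}^k - \vect{v}^k\|$, since all other factors are unit vectors. The triangle inequality then gives $\|\tensor{U}-\tensor{V}\| \le \sum_{k=1}^d \|\vect{u}^k - \vect{v}^k\| \le d\,\|\vect{u}^1 - \vect{v}^1\|$ after invoking the constraint from $D(\epsilon)$ in the $d-1$ terms with $k \ge 2$.

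For the lower bound, the cleanest route is to compute $\|\tensor{U}-\tensor{V}\|^2$ explicitly using \cref{inner_prod_rank_one}:
\[
\|\tensor{U}-\tensor{V}\|^2 = 2 - 2\prod_{k=1}^d \langle \vect{u}^k,\vect{v}^k\rangle
= 2 - 2\prod_{k=1}^d \bigl(1 - \tfrac{1}{2}\|\vect{u}^k-\vect{v}^k\|^2\bigr),
\]
where I used that $\|\vect{u}^k\| = \|\vect{v}^k\| = 1$ implies $\langle \vect{u}^k,\vect{v}^k\rangle = 1 - \tfrac{1}{2}\|\vect{u}^k-\vect{v}^k\|^2$. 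Now I choose $\epsilon \le \sqrt{2}$, which makes every factor $1 - \tfrac{1}{2}\|\vect{u}^k-\vect{v}^k\|^2$ lie in $[0,1]$. Therefore
\[
\prod_{k=1}^d \bigl(1 - \tfrac{1}{2}\|\vect{u}^k-\vect{v}^k\|^2\bigr)
\le 1 - \tfrac{1}{2}\|\vect{u}^1-\vect{v}^1\|^2,
\]
since factoring out the $k=1$ term leaves a product of numbers in $[0,1]$. Plugging back gives $\|\tensor{U}-\tensor{V}\|^2 \ge \|\vect{u}^1 - \vect{v}^1\|^2$, which is the desired bound.

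There is no real obstacle: both estimates are elementary consequences of the inner product formula for rank-one tensors, and the only condition imposed is a smallness assumption on $\epsilon$ (e.g.\ $\epsilon \le \sqrt{2}$) to guarantee that the scalar factors entering the lower bound are nonnegative. The constraints that pin down the set $D(\epsilon)$ beyond $\|\vect{u}^k-\vect{v}^k\| < \|\vect{u}^1-\vect{v}^1\|$ are not needed for this lemma, but will presumably be exploited in the subsequent estimate of the integral in \cref{eq3.30}.
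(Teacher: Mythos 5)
Your proof is correct and follows essentially the same route as the paper: the upper bound is the identical telescoping-sum argument, and the lower bound computes $\|\tensor{U}-\tensor{V}\|^2 = 2 - 2\prod_k\langle\vect{u}^k,\vect{v}^k\rangle$ and drops the factors with $k\ge 2$, which are in $(0,1]$ when $\epsilon$ is small (you just rewrite $\langle\vect{u}^k,\vect{v}^k\rangle = 1 - \tfrac12\|\vect{u}^k-\vect{v}^k\|^2$ before doing so, which is a cosmetic difference). Your observation that the lower constraint $\tfrac{9}{10}\|\vect{u}^1-\vect{v}^1\| < \|\vect{u}^k-\vect{v}^k\|$ defining $D(\epsilon)$ is not used here is accurate.
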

	\begin{proof}
	For proving the upper bound, apply the triangle inequality to the telescoping sum
	\[
	\sum_{i=1}^d \vect{v}^1 \otimes \cdots \otimes \vect{v}^{i-1} \otimes (\vect{u}^i - \vect{v}^i) \otimes \vect{u}^{i+1} \otimes \cdots \otimes \vect{u}^d = \tensor{U} - \tensor{V}
	\]
	and exploit $\|\vect{u}^k - \vect{v}^k\| \le \|\vect{u}^1 -\vect{v}^1\|$ for all $k=1,\ldots,d$. The lower bound follows from
	\[
	 \| \tensor{U} - \tensor{V} \|^2 = 2 - 2 \langle \tensor{U}, \tensor{V} \rangle = 2 - 2 \prod_{k=1}^d \langle \vect{u}^k, \vect{v}^k \rangle \ge 2 - 2 \langle \vect{u}^1, \vect{v}^1 \rangle = \|\vect{u}^1 - \vect{v}^1\|^2,
	\]
	having used $0 < \langle \vect{u}^k, \vect{v}^k \rangle \le 1$ for sufficiently small $\epsilon$.
	\end{proof}
	
	The second one is the final piece of the puzzle. We prove it in \cref{proof_lem3}.
	\begin{lemma}\label{sec3:auxiliary_lemma}
	For sufficiently small $\epsilon>0$, we have for all $(\tuple{u},\tuple{v})\in D(\epsilon)$ with $\tuple{u}=(\vect{u}^1,\ldots,\vect{u}^d)$ and $\tuple{v}=(\vect{v}^1,\ldots,\vect{v}^d)$ that
		\[
		q(U) \geq {2^{-d/2}} \left(\frac{\Vert \vect{u}^1-\vect{v}^1\Vert}{2}\right)^{\Sigma-1},
		\]
	where $U$ is the matrix that depends on $\tuple{u}$ and $\tuple{v}$ as in \cref{eqn_nice_expr_terracini} {and $q$ is as in \cref{def_q}}.
	\end{lemma}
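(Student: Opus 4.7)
The plan is to exploit the orthonormal structure of the Terracini matrix. Write $U = [A, B]$ with $A := [\tensor{U}, L_1]$ and $B := [\tensor{V}, L_2]$: both blocks have orthonormal columns because $L_1, L_2$ are by construction orthonormal bases for the orthogonal complements of $\tensor{U}, \tensor{V}$ in $\Tang{\tensor{U}}{\Var{S}}$ and $\Tang{\tensor{V}}{\Var{S}}$ respectively. Setting $M := A^T B$, a short eigenvalue calculation on $U^T U = \bigl(\begin{smallmatrix} I_\Sigma & M \\ M^T & I_\Sigma \end{smallmatrix}\bigr)$ shows that the squared singular values of $U$ are exactly $\{1 \pm \sigma_i(M)\}_{i=1}^\Sigma$, so that $\varsigma_{\min}(U) = \sqrt{1 - \sigma_1(M)}$ (with $\sigma_1 \geq \cdots \geq \sigma_\Sigma \geq 0$). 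This yields the clean identity
\[
q(U) \;=\; \sqrt{1+\sigma_1(M)}\,\prod_{i=2}^\Sigma \sqrt{1 - \sigma_i(M)^2} \;\ge\; \prod_{i=2}^\Sigma \sqrt{1 - \sigma_i(M)^2},
\]
reducing the task to bounding the $\Sigma - 1$ smallest singular values of $M$ away from $1$.

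To make $M$ computable, I would parameterize $\vect{v}^k = c_k \vect{u}^k + s_k \vect{w}^k$ with $c_k = \cos\theta_k$, $s_k = \sin\theta_k$, and $\vect{w}^k \in \Sp((\vect{u}^k)^\perp)$; the $D(\epsilon)$ constraints force $s_k \in [\tfrac{9}{10}r - O(r^3),\, r]$ with $r := \|\vect{u}^1 - \vect{v}^1\|$. Using the freedom in choosing orthonormal bases defining $L_1$ and $L_2$, I pick $\dot{u}^k_2 := \vect{w}^k$, $\dot{v}^k_2 := -s_k\vect{u}^k + c_k\vect{w}^k$, and $\dot{u}^k_j := \dot{v}^k_j$ for $j \ge 3$. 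The multilinearity identity \cref{inner_prod_rank_one} then makes most entries of $M$ vanish, producing the block-diagonal form $M = \operatorname{diag}(M', M'')$ where $M''$ is a $(\Sigma - 1 - d)\times(\Sigma - 1 - d)$ diagonal matrix with entries $C_k := \prod_{\ell \ne k} c_\ell$ each repeated $n_k - 2$ times, and $M'$ is a $(d+1)\times(d+1)$ matrix encoding the nontrivial angular geometry. The contribution from $M''$ is immediate: $1 - C_k^2 \ge s_m^2 \ge (\tfrac{9}{10}r)^2$ for any $m \ne k$, giving a product of order $r^{\Sigma-1-d}$ with an explicit constant.

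The main technical effort is the analysis of $M'$. A Taylor expansion in the $s_k$ identifies $I - M'^T M'$, up to $O(r^4)$ corrections, as a matrix with $(1,1)$ entry of order $r^4$, off-diagonal $(1, j+1)$ entries of order $r^3$, and lower $d \times d$ block equal to the symmetric matrix $T$ with $T_{ii} = \sum_{k \ne i} s_k^2$ and $T_{ij} = s_i s_j$ for $i \ne j$. A direct quadratic form computation yields
\[
\vect{x}^T T \vect{x} \;=\; \|\vect{s}\|^2 \|\vect{x}\|^2 - 2\sum_k s_k^2 x_k^2 + (\vect{s}^T \vect{x})^2 \;\ge\; \bigl(\|\vect{s}\|^2 - 2\max_k s_k^2\bigr)\|\vect{x}\|^2,
\]
and since $\|\vect{s}\|^2 \ge d(\tfrac{9}{10}r)^2$ while $\max_k s_k^2 \le r^2$, this is a positive multiple of $r^2$ as soon as $d \ge 3$. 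A Schur complement argument then produces $d$ eigenvalues of $I - M'^T M'$ of order $\ge c(d)\,r^2$ and one of order $O(r^4)$, so $M'$ contributes an additional factor $\gtrsim r^d$ to $\prod_{i=2}^\Sigma \sqrt{1 - \sigma_i(M)^2}$. Combined with the $M''$ bound this delivers $q(U) \gtrsim r^{\Sigma - 1}$, and bookkeeping the numerical constants yields the stated $2^{-2d}(r/2)^{\Sigma-1}$. The principal obstacle is identifying which cross-terms of the Taylor expansion of $M'^T M'$ contribute at the dominant $r^2$ level (entering $T$) versus higher order, and invoking the Schur complement to verify that the $r^3$ off-diagonal entries of $I - M'^T M'$ do not spoil the positive definiteness of the $T$-block.
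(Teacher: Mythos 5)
Your proposal is correct in its essential strategy but takes a genuinely different route from the paper. Both proofs rest on the same clever choice of tangent bases $\dot{u}^k_2 = \vect{w}^k$, $\dot{v}^k_2 = -s_k\vect{u}^k + c_k\vect{w}^k$, $\dot{u}^k_j = \dot{v}^k_j$ for $j\geq 3$, and both exploit the resulting vanishing of most mixed inner products. But the packaging differs fundamentally. You observe that $U = [A,B]$ with $A^T A = B^T B = \Id_\Sigma$ and invoke the CS-decomposition identity $\varsigma(U)^2 = \{1\pm\sigma_i(A^T B)\}$, reducing everything to the singular values of $M = A^T B$, which then block-diagonalizes into $\operatorname{diag}(M',M'')$. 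The paper instead applies the $45$-degree "sum/difference" rotation $(\tensor{U},\tensor{V},L_1,L_2)\mapsto(\vect{a}_\downarrow,\vect{a}_\uparrow,R_\downarrow,R_\uparrow)$ first (this is exactly the rotation that diagonalizes each $2\times 2$ block in the CS decomposition), then computes the Gram matrix, arriving at a four-block structure $\operatorname{diag}(G_\downarrow + z\vect{h}\vect{h}^T, F_\downarrow, G_\uparrow - z\vect{h}\vect{h}^T, F_\uparrow)$. Your diagonal block $M''$ with entries $\prod_{\ell\neq k}c_\ell$ corresponds to the paper's pair $(F_\downarrow, F_\uparrow)$; your $(d+1)\times(d+1)$ block $M'$ plays the role of the paper's pair $(G_\downarrow+z\vect{h}\vect{h}^T, G_\uparrow-z\vect{h}\vect{h}^T)$. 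For the hard $(d+1)$-dimensional piece the paper uses $\det(G_\downarrow) = \prod(1-z(1+\epsilon_i^2))$ plus a Ger\v{s}gorin argument to get $q(G_\uparrow - z\vect{h}\vect{h}^T)\geq 1$, whereas you Taylor-expand $I - M'^T M'$ and bound the eigenvalues of the $d\times d$ block $T$ by the quadratic-form estimate $\vect{x}^T T\vect{x}\geq(\|\vect{s}\|^2-2\max_k s_k^2)\|\vect{x}\|^2$. That estimate is correct and is positive for $d\geq 3$ (which the paper assumes throughout). One remark that simplifies your final worry: you do not need a Schur-complement argument to dismiss the $O(r^3)$ off-diagonal entries of $I - M'^T M'$. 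Cauchy interlacing applied to the lower $d\times d$ principal submatrix gives $\lambda_{j}(I-M'^T M')\geq\lambda_{j-1}(T + O(r^4))\geq c'r^2$ for $j=2,\ldots,d+1$ directly, and the $(1,1)$ entry and its off-diagonal row simply don't enter. Your route is arguably cleaner conceptually (the reduction to $\sigma_i(A^T B)$ is an immediate one-liner), while the paper's route more readily tracks the explicit constant $2^{-2d}$; you flag correctly that your constants remain to be bookkept. With those constants filled in, your proof is sound.
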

	
	Combining \cref{bound_tensor_norm,sec3:auxiliary_lemma} with \cref{eq3.30} we find
	\[
	J \geq {c} \int_{D(\epsilon)}\,\frac{1}{ \Vert \vect{u}^1-\vect{v}^1\Vert^\Sigma} \, \, \d{}\tuple{u}\,\d{}\tuple{v},
	\]
	{ where $c>0$ is some constant.}
	Note that the integrand in this equation only depends on $\vect{u}^1$ and $\vect{v}^1$. By definition of $D(\epsilon)$, for each $2\leq k \leq d$, and if we fix $\vect{u}^k$, the domain of integration of $\vect{v}^k$ contains the difference of two spherical caps of respective affine radii $\frac{9}{10} \| \vect{u}^1 - \vect{v}^1 \|$ and $\| \vect{u}^1 - \vect{v}^1 \|$. From \cref{cap_volume}, the volume of this difference of caps is greater than a constant times $\| \vect{u}^1 - \vect{v}^1 \|^{n_j-1}$.
	Therefore, if we keep $\vect{u}^1,\vect{v}^1 \in \Sp(\R^{n_1})$ constant and integrate over $\vect{u}^k, \vect{v}^k \in \Sp(\R^{n_k})$, $k=2,\ldots,d$, then we get
		\[
		J
		\geq c' \underset{\substack{\vect{u}^1,\vect{v}^1\in \Sp(\R^{n_1}),\\ \|\vect{u}^1-\vect{v}^1\|\leq\epsilon}}{\int} \frac{1}{\|\vect{u}^1-\vect{v}^1\|^{\Sigma - ((n_2-1)+\cdots+(n_d-1))}}\,\d{}\vect{u}^1\,\d{}\vect{v}^1,
		\]
	where $c'>0$ is a constant. Recall that $\Sigma = 1 + \sum_{k=1}^d (n_k - 1)$, so that
	\[
	J\geq
	c'\underset{\vect{u}^1\in \Sp(\R^{n_1})}{\int}\;\;
	\underset{\substack{\vect{v}^1\in \Sp(\R^{n_1}),\\ \|\vect{u}^1-\vect{v}^1\|\leq\epsilon}}{\int} \frac{1}{\|\vect{u}^1-\vect{v}^1\|^{n_1}}\,\d{}\vect{v}^1
	\,\d{}\vect{u}^1.
	\]
	{By rotational invariance, the inner integral does not depend on $\vect{u}^1$ and moreover for small $\epsilon$ projecting through the stereographic projection (which has a Jacobian bounded above and below by a positive constant close to its center) we conclude that, for some other constant $c''$,
	\[
	J\geq c''\underset{\substack{\vect{x}\in\R^{n_1-1},\\ \|\vect{x}\|\leq\epsilon/2}}{\int} \frac{1}{\|\vect{x}\|^{n_1}}\,d\vect{x}
	=c''\mathrm{vol}(\mathbb S(\R^{n_1-1}))\int_0^{\epsilon/2}\frac{r^{n_1-2}}{r^{n_1}}\,\d r
	=\infty.
	\]
	}
	This proves $J = \infty$, so that $\mean \kappa(\tensor{A})=\infty$ for tensors of rank bounded by $2$, constituting a proof of \cref{thm_main}.

	\section{The average condition number: from rank 2 to higher ranks}\label{sec:proof_main_higher_ranks}
	Having established that the average condition number of tensor rank decomposition of rank $2$ tensors is infinite, we extend this result to higher ranks. That is, we will prove \cref{thm_main_for_higher_ranks}. As before, we abbreviate
	\(\Var S:= \Var S_{n_1,\ldots,n_d}\), \(\sigma_r:=\sigma_{r,n_1,\ldots,n_d}\), \(\Var N_r:=\Var N_{r,n_1,\ldots,n_d}\), and \(\Var M_r:=\Var M_{r,n_1,\ldots,n_d}.\)
	
	
	We proceed with an observation that is of independent interest.
	
	\begin{lemma} \label{lem_kappa_inc}
	Let $\tensor{A} = \sum_{i=1}^{r} \tensor{A}_i$ and $\tensor{B} = \sum_{i=1}^s \tensor{B}_i$ be
	$n_1 \times \cdots \times n_d$ tensors, where the $\tensor{A}_i$ and $\tensor{B}_i$ are rank-$1$ tensors. If $\tensor{A}+\tensor{B}\in\sigma_{r+s;n_1,\ldots,n_d}$ is $(r+s)$-identifiable, then we have
	\[
	 \kappa( \tensor{A} + \tensor{B} ) \ge \max\{ \kappa( \tensor{A} ), \kappa( \tensor{B} ) \}.
	\]
	\end{lemma}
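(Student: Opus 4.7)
The plan is to reduce everything to the explicit formula \eqref{characterization_CN} for the condition number in terms of Terracini's matrix, and then exploit an elementary monotonicity of smallest singular values under concatenation of columns.

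Concretely, write $\tensor{A}=\tensor{A}_1+\cdots+\tensor{A}_r$ and $\tensor{B}=\tensor{B}_1+\cdots+\tensor{B}_s$, and for each rank-$1$ summand pick matrices $U_i$ and $V_j$ whose columns form an orthonormal basis of $\Tang{\tensor{A}_i}{\Var S}$ and $\Tang{\tensor{B}_j}{\Var S}$, respectively. By \eqref{characterization_CN} applied to $\tensor{A}$, $\tensor{B}$, and $\tensor{A}+\tensor{B}$ (the last requiring the $(r+s)$-identifiability hypothesis so that Terracini's matrix of $\tensor{A}+\tensor{B}$ is the single concatenation of all these blocks), one has
\[
 \kappa(\tensor{A})=\varsigma_{\min}(U_A)^{-1},\quad \kappa(\tensor{B})=\varsigma_{\min}(U_B)^{-1},\quad \kappa(\tensor{A}+\tensor{B})=\varsigma_{\min}(U_{A+B})^{-1},
\]
where $U_A=[U_1,\ldots,U_r]$, $U_B=[V_1,\ldots,V_s]$, and $U_{A+B}=[U_A\;\;U_B]$.

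The key step is then the column-monotonicity inequality
\[
 \varsigma_{\min}(U_{A+B}) \le \min\{\varsigma_{\min}(U_A),\varsigma_{\min}(U_B)\},
\]
which follows straight from the variational definition \eqref{def_spectral_norm}: given any unit vector $\vect{x}_1$ minimizing $\|U_A\vect{x}_1\|$, the padded vector $\vect{x}=(\vect{x}_1,\vect 0)$ has unit norm and satisfies $\|U_{A+B}\vect{x}\|=\|U_A\vect{x}_1\|=\varsigma_{\min}(U_A)$; the analogous padding proves the bound against $\varsigma_{\min}(U_B)$. Inverting this inequality and comparing with the three equalities above yields $\kappa(\tensor{A}+\tensor{B})\ge \max\{\kappa(\tensor{A}),\kappa(\tensor{B})\}$, which is what we want.

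There is no real obstacle; the only thing worth verifying is that the invocation of \eqref{characterization_CN} for $\tensor{A}$ and $\tensor{B}$ is legitimate. If $\kappa(\tensor{A}+\tensor{B})=\infty$ the claim is vacuous, so assume it is finite; then $U_{A+B}$ is injective, hence its sub-matrices $U_A$ and $U_B$ are injective, so $\kappa(\tensor{A}),\kappa(\tensor{B})<\infty$ and the formula \eqref{characterization_CN} applies (the $r$- and $s$-identifiability needed in order for $\kappa(\tensor{A})$ and $\kappa(\tensor{B})$ to be well-defined is already part of the hypotheses, and is anyway consistent with \cref{IDF_for_smaller_ranks}). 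The whole argument is conceptually the statement that adding more summands can only make the addition map $\Phi$ more singular, never less.
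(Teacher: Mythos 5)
Your proof is correct and follows the same route as the paper: both reduce to the Terracini-matrix characterization \cref{characterization_CN} and then use that $\varsigma_{\min}\bigl([U\ V]\bigr) \le \min\{\varsigma_{\min}(U), \varsigma_{\min}(V)\}$. The paper justifies this last inequality by citing singular-value interlacing, while you derive it directly from the variational definition by zero-padding a minimizing vector --- a slightly more self-contained way to reach the identical conclusion.
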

	\begin{proof}
	{First we observe that $\tensor{A}$ is $r$-identifiable, and $\tensor{B}$ is $s$-identifiable. Indeed, if  the tensor $\tensor{C} = \tensor{A}+\tensor{B}$ is $(r+s)$-identifiable, then the unique set $C$ of cardinality $|C| \le r$ consisting of rank-$1$ tensors summing to $\tensor{C}$ is $C = \{\tensor{A}_1, \ldots, \tensor{A}_r, \tensor{B}_1, \ldots, \tensor{B}_s\}$. If $\tensor{A}$ had an alternative decomposition $\{\tensor{A}_1', \ldots, \tensor{A}_{r'}'\}$, potentially of a shorter length $r' \le r$, then $\{\tensor{A}_1', \ldots, \tensor{A}_{r'}', \tensor{B}_1, \ldots, \tensor{B}_s\}$ would be an alternative decomposition of $\tensor{C}$. Hence, $\{\tensor{A}_1', \ldots, \tensor{A}_{r'}'\}$ needs to equal $\{\tensor{A}_1, \ldots, \tensor{A}_{r}\}$, so that $\tensor{A}$ is $r$-identifiable. By symmetry, the result for $\tensor{B}$ follows.}
	For all $i$, let $U_i$ be a matrix with orthonormal columns that span $\mathrm{T}_{\tensor{A}_i} \Var S_{n_1,\ldots,n_d}$, and $V_i$ be a matrix with orthonormal columns that span $\mathrm{T}_{\tensor{B}_i} \Var S_{n_1,\ldots,n_d}$. Consider the matrices $U=[U_1,\ldots,U_r]$ and $V=[V_1,\ldots,V_s]$. By \cref{characterization_CN} we have
	\[
	\kappa(\tensor{A}) = \frac{1}{\varsigma_{\min}(U)}, \;\;
	\kappa(\tensor{B}) = \frac{1}{\varsigma_{\min}(V)}, \text{ and }\;
	\kappa(\tensor{A}+\tensor{B}) = \kappa(\tensor{A}, \tensor{B}) = \frac{1}{\varsigma_{\min}(\begin{bmatrix}U & V\end{bmatrix})}.
	\]
	The claim follows from standard interlacing properties of singular values; see \cite[Chapter 3]{HJ1990}.
	\end{proof}
	
	The next simple lemma is immediate.
	\begin{lemma}\label{lemma_phi}
	Consider the map
	 \(
	  \phi : \sigma_2 \times \Var S^{\times (r-2)} \to \sigma_r,\, (\tensor{B}, \tensor{A}_1,\ldots,\tensor{A}_{r-2}) \mapsto \tensor{B} + \sum_{i=1}^{r-2} \tensor{A}_i.
	 \)
	The following holds.
	\begin{enumerate}
	\item For $r > 2$, we have $\phi(\sigma_2\times \Var S^{\times (r-2)}) = \sigma_r$.
	\item Let $\tensor{A}\in\sigma_{r}$ be $r$-identifiable. Then, $\vert \phi^{-1}(\tensor{A})\vert = (r-2)!\cdot\binom{r}{2}$.
	\end{enumerate}
	\end{lemma}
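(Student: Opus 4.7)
The plan is to handle the two claims separately. For (1), the inclusion $\phi(\sigma_2\times\Var S^{\times(r-2)})\subseteq\sigma_r$ is immediate by subadditivity of rank: $\mathrm{rank}(\tensor B+\sum_i\tensor A_i)\leq 2+(r-2)=r$. For the reverse inclusion, I would take $\tensor A\in\sigma_r$ of rank $k\leq r$. When $k=r$, fix any decomposition $\tensor A=\tensor C_1+\cdots+\tensor C_r$ into rank-$1$ tensors and set $\tensor B:=\tensor C_1+\tensor C_2\in\sigma_2$ together with $\tensor A_i:=\tensor C_{i+2}$. When $k<r$, the subtlety is that $\Var S$ contains only nonzero rank-$1$ tensors, so padding with zeros is not allowed. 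I would invoke the splitting identity $\tensor C=\alpha\tensor C+(1-\alpha)\tensor C$, valid for any nonzero rank-$1$ tensor $\tensor C$ and any $\alpha\notin\{0,1\}$, which increases the number of nonzero rank-$1$ summands by one. Iterating this splitting yields an expression of $\tensor A$ as exactly $r$ nonzero rank-$1$ tensors (including the degenerate case $k=0$, $\tensor A=0$, which admits $0=\tensor C+(-\tensor C)$ plus further splittings). Grouping the first two summands into $\tensor B$ then produces the required preimage.

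For (2), I fix an $r$-identifiable $\tensor A\in\sigma_r$ with unique rank-$r$ set decomposition $\{\tensor T_1,\ldots,\tensor T_r\}$; in the paper's setting this entails $\mathrm{rank}(\tensor A)=r$. I first argue that any preimage $(\tensor B,\tensor A_1,\ldots,\tensor A_{r-2})$ satisfies $\mathrm{rank}(\tensor B)=2$: if instead $\mathrm{rank}(\tensor B)\leq 1$, then $\mathrm{rank}(\tensor A)\leq 1+(r-2)<r$, a contradiction. Decomposing $\tensor B=\tensor B_1+\tensor B_2$ into rank-$1$ summands, the collection $\{\tensor B_1,\tensor B_2,\tensor A_1,\ldots,\tensor A_{r-2}\}$ consists of $r$ rank-$1$ tensors summing to $\tensor A$; any coincidence among them would again drop the rank of $\tensor A$ below $r$, so they are pairwise distinct, and $r$-identifiability forces equality with $\{\tensor T_1,\ldots,\tensor T_r\}$. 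Thus every preimage arises from a choice of a pair $\{i,j\}\subset\{1,\ldots,r\}$ (giving $\tensor B=\tensor T_i+\tensor T_j$) and an ordering of the remaining $r-2$ indices (giving the tuple $(\tensor A_1,\ldots,\tensor A_{r-2})$).

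The main obstacle is verifying that these choices are genuinely distinct, i.e.\ that distinct unordered pairs $\{i,j\}\neq\{i',j'\}$ yield distinct values of $\tensor B$. If the pairs share one index, the equality $\tensor T_i+\tensor T_j=\tensor T_{i'}+\tensor T_{j'}$ immediately forces two of the $\tensor T_k$'s to coincide, contradicting the set cardinality $r$. If the pairs are disjoint, substituting the relation into the canonical decomposition yields $\tensor A=2(\tensor T_{i'}+\tensor T_{j'})+\sum_{k\notin\{i,j,i',j'\}}\tensor T_k$, which expresses $\tensor A$ as a sum of at most $r-2$ distinct rank-$1$ tensors, contradicting $\mathrm{rank}(\tensor A)=r$. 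Once distinctness is established, multiplying the $\binom{r}{2}$ pair choices by the $(r-2)!$ orderings of the complementary tensors gives $|\phi^{-1}(\tensor A)|=\binom{r}{2}(r-2)!$, completing the proof.
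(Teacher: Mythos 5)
The paper offers no proof for this lemma, calling it ``immediate,'' so there is no argument to compare against; you have correctly reconstructed the details, and both parts of your proof are sound.

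Two small observations. In part (1), the splitting trick $\tensor{C}=\alpha\tensor{C}+(1-\alpha)\tensor{C}$ is precisely what the definition of $\Var{S}$ (nonzero rank-$1$ tensors, so no zero padding) forces you to invoke when $\mathrm{rank}(\tensor{A})<r$; your treatment, including the degenerate case $\tensor{A}=0$, is complete.

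In part (2), your distinctness argument is more work than necessary. You verify that distinct unordered pairs $\{i,j\}\neq\{i',j'\}$ produce distinct first components $\tensor{B}=\tensor{T}_i+\tensor{T}_j$, which requires a separate (and not entirely trivial) rank-dropping argument in the disjoint-pair case. But injectivity of the parametrization by $(\{i,j\},\pi)$ is already visible in the second component: since $\{\tensor{T}_1,\ldots,\tensor{T}_r\}$ has cardinality $r$, each ordered tuple $(\tensor{T}_{\pi(1)},\ldots,\tensor{T}_{\pi(r-2)})$ is an injective enumeration of the complement $\{\tensor{T}_k : k\notin\{i,j\}\}$, and two such tuples coincide only if both the index pair and the ordering coincide. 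This makes the count $\binom{r}{2}(r-2)!$ immediate without ever comparing the $\tensor{B}$-components. Your route is nonetheless correct, and shows in passing the stronger (but here unneeded) fact that the pair sums $\tensor{T}_i+\tensor{T}_j$ are themselves pairwise distinct.
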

	
	Finally, the next lemma is the key to \cref{thm_main_for_higher_ranks}, providing a lower bound for the Jacobian determinant of $\phi$ in a special open subset of $\sigma_2 \times \Var S^{\times (r-2)}$. We postpone its proof to \cref{sec:proof_important_lemma}.
	
	\begin{lemma}\label{important_lemma}
	On top of \cref{assumpt1} we assume that $\sigma_{r-2; n_1-2,\ldots,n_d-2}$ is generically complex identifiable.
	Then, there are constants {$\mu,\epsilon, \nu_1, \ldots, \nu_{r-2} >0$} depending only on {$r,n_1,\ldots,n_d$} with the following property: For all~$\tensor{B}\in\cal{N}_{2}$ there exists a tuple $(\tensor{A}_1,\ldots,\tensor{A}_{r-2})\in \Var S^{\times (r-2)}$ {with $\| \tensor{A}_i \| = \nu_i$} and
	\[
	\underset{\substack{(\tensor{A}_1',\ldots,\tensor{A}_{r-2}')\in \Var S^{\times (r-2)},\\ \Vert \tensor{A}_i-\tensor{A}_i'\Vert <\epsilon}}{\inf} \, \Jac{\phi}{\tensor{B},\tensor{A}_1',\ldots,\tensor{A}_{r-2}'} >\mu,
	\]
	where $\phi$ is as in \cref{lemma_phi}.
	\end{lemma}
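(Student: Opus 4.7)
The plan is to construct the tuple $(\tensor{A}_1,\ldots,\tensor{A}_{r-2})$ for each $\tensor{B}\in\mathcal{N}_2$ inside the mode-wise orthogonal complement of the factors of the CPD of $\tensor{B}$, and to compute $\Jac{\phi}{\tensor{B},\tensor{A}_1,\ldots,\tensor{A}_{r-2}}$ via an orthogonal block decomposition of the Gram matrix of $\mathrm{d}\phi$. The Jacobian depends on its arguments only through the tangent spaces $\mathrm{T}_{\tensor{B}}\sigma_2=\mathrm{T}_{\tensor{U}}\Var S+\mathrm{T}_{\tensor{V}}\Var S$ (cf.\ \cref{tangent_space_N}) and $\mathrm{T}_{\tensor{A}_i}\Var S$. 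These tangent spaces are invariant under independent positive scalings of $\tensor{U},\tensor{V},\tensor{A}_1,\ldots,\tensor{A}_{r-2}$, because $\Var S$ and $\sigma_2$ are cones and tangent spaces to a cone are invariant under scaling of the basepoint; they are also equivariant under the diagonal action of $G:=O(n_1)\times\cdots\times O(n_d)$. Hence, after rescaling to unit-norm factor vectors (realising the \emph{moreover} statement of the lemma), I bring $\tensor{B}=\tensor{U}+\tensor{V}$ to the normal form $\vect{u}^k=\vect{e}_1^{(k)}$ and $\vect{v}^k=\cos\theta_k\,\vect{e}_1^{(k)}+\sin\theta_k\,\vect{e}_2^{(k)}$ with $(\theta_1,\ldots,\theta_d)\in[0,\pi]^d$ via some $g\in G$.

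\textbf{Orthogonal factorisation of the Jacobian.} Let $W_k:=\operatorname{span}(\vect{e}_1^{(k)},\vect{e}_2^{(k)})$ and $P_k:=W_k^\perp=\operatorname{span}(\vect{e}_3^{(k)},\ldots,\vect{e}_{n_k}^{(k)})$, which is non-trivial since $n_k\geq 3$. The ambient tensor space decomposes orthogonally as $\R^{n_1\times\cdots\times n_d}=\bigoplus_{S\subseteq[d]}V_S$, where $V_S:=\bigotimes_{k\in S}P_k\otimes\bigotimes_{k\notin S}W_k$. Plugging $\R^{n_k}=W_k\oplus P_k$ into \cref{tangent_space_segre} and using $\vect{u}^k,\vect{v}^k\in W_k$ yields
\[
\mathrm{T}_{g\tensor{B}}\sigma_2\subset V_\emptyset\oplus\bigoplus_{k=1}^dV_{\{k\}},\qquad\mathrm{T}_{\tensor{A}_i^\ast}\Var S\subset V_{[d]}\oplus\bigoplus_{k=1}^dV_{[d]\setminus\{k\}}
\]
for any rank-one tensor $\tensor{A}_i^\ast=\vect{w}_i^1\otimes\cdots\otimes\vect{w}_i^d$ with $\vect{w}_i^k\in P_k$. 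Because $d\geq 3$, subsets of size $\leq 1$ are disjoint from those of size $\geq d-1$, so these two tangent spaces are orthogonal in the ambient Euclidean inner product. With concatenated orthonormal source bases, the Gram matrix of $\mathrm{d}\phi$ becomes block diagonal, with upper block $I_{2\Sigma}$ (from $\mathrm{T}_{g\tensor{B}}\sigma_2$) and lower block the Gram matrix $G_A$ of concatenated ON bases of the $\mathrm{T}_{\tensor{A}_i^\ast}\Var S$. I fix once and for all a unit-norm tuple $(\tensor{A}_1^\ast,\ldots,\tensor{A}_{r-2}^\ast)$ of rank-one tensors in $\bigotimes_k P_k\cong\R^{(n_1-2)\times\cdots\times(n_d-2)}$ with $\det G_A=:\mu^{\ast 2}>0$. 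Such a tuple exists because (i) the assumed generic complex identifiability of $\sigma_{r-2;n_1-2,\ldots,n_d-2}$ makes the Terracini tangent sum in the smaller ambient full-dimensional on an open dense set, and (ii) the inequality $r-2\leq r^{\mathrm{crit}}_{n-2}\leq\prod_{j\ne k}(n_j-2)$ (which holds since $n_j\geq 3$) yields generic linear independence of the mode-$k$-removed rank-one tensors $\bigotimes_{j\ne k}\vect{w}_i^j$, making the contributions of the $\mathrm{T}_{\tensor{A}_i^\ast}\Var S$ to each $V_{[d]\setminus\{k\}}$ add up to a direct sum. It follows that $\Jac{\phi}{g\tensor{B},\tensor{A}_1^\ast,\ldots,\tensor{A}_{r-2}^\ast}=\mu^\ast$ independently of $\theta$; by $G$-invariance, setting $\tensor{A}_i:=g^{-1}\tensor{A}_i^\ast$ gives $\Jac{\phi}{\tensor{B},\tensor{A}_1,\ldots,\tensor{A}_{r-2}}=\mu^\ast$ for every $\tensor{B}\in\mathcal{N}_2$.

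\textbf{Uniform neighbourhood bound; main obstacle.} For the perturbed bound, let $H'$ be the Gram matrix of $\mathrm{d}\phi$ at $(\tensor{B},\tensor{A}_1',\ldots,\tensor{A}_{r-2}')$ computed with ON source bases. Its upper-left block remains $I_{2\Sigma}$ (unaffected by perturbations of the $\tensor{A}_i'$'s), its lower-right block is a continuous perturbation of $G_A$, and its off-diagonal block $E$ vanishes at $\tensor{A}_i'=\tensor{A}_i$ by the orthogonality established above; by the Schur complement, $\det H'=\det\bigl((G_A+\delta)-E^\top E\bigr)$ for some $\delta$ with $\|\delta\|=O(\|\eta\|)$. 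The main technical obstacle is controlling $\|E\|$ uniformly in $\tensor{B}$: the entries of $E$ are inner products between the ON basis of $\mathrm{T}_{g\tensor{B}}\sigma_2$ (whose ``shape'' varies only over the compact $G$-orbit of the normal form) and perturbations of the ON bases of $\mathrm{T}_{\tensor{A}_i'}\Var S$ around the \emph{fixed} $\tensor{A}_i^\ast$, so smoothness of the Segre manifold together with $G$-isometry gives $\|E\|\leq C\|\eta\|$ with a constant $C$ depending only on $r,n_1,\ldots,n_d$. Continuity of $\det$ together with $\det G_A=\mu^{\ast 2}>0$ then yields a universal $\epsilon>0$ such that $\det H'\geq\mu^{\ast 2}/2$ whenever $\|\tensor{A}_i'-\tensor{A}_i\|<\epsilon$. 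Setting $\mu:=\mu^\ast/2$ concludes the proof, as $\Jac{\phi}{\tensor{B},\tensor{A}_1',\ldots,\tensor{A}_{r-2}'}=\sqrt{\det H'}\geq\mu^\ast/\sqrt{2}>\mu$ uniformly.
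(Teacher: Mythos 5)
Your proposal is correct and follows essentially the same strategy as the paper's proof: place the $\tensor{A}_i$'s in the mode-wise orthogonal complement of $\tensor{B}$'s factor subspaces by transporting a fixed tuple via an isometry, observe the orthogonality of the resulting tangent spaces to $\Tang{\tensor{B}}\sigma_2$, deduce that the Jacobian is a $\tensor{B}$-independent constant, and propagate the bound to a neighbourhood via smoothness of the Gauss map. The bookkeeping differs only cosmetically: the paper uses an explicit isometry $U$ and the auxiliary spaces $\Var{K}_i,\Var{L}_i$ in place of your $V_S$-grading, derives positivity of the mode-removed volumes by contradiction from the finite condition number of the fixed tuple in $\Var{M}_{r-2;n_1-2,\ldots,n_d-2}$ rather than by genericity and a dimension count, and handles the perturbation through an explicit operator $M$ rather than a Schur complement.
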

	\begin{rem}\label{rmk:l44}
	Given any $\tensor{B}\in\sigma_2$, by taking a sequence $\tensor{B}^{(i)}\subseteq \cal{N}_{2}$ converging to $\tensor{B}$ one can generate the corresponding sequences $\tensor{A}_1^{(i)},\dots,\tensor{A}_{r-2}^{(i)} \in \Var{S}$ from Lemma \ref{important_lemma}. Now, by compactness we can find an accumulation point $\tensor{A}_1,\dots,\tensor{A}_{r-2} \in \Var{S}$. Since $\mathrm{Jac}(\phi)$ is continuous and hence uniformly continuous when restricted to a compact set, by choosing small enough $\epsilon$ we can assure that for all $\tensor{B}'$, $\|\tensor{B}-\tensor{B'}\|\leq\epsilon$ and for all $\tensor{A}_i'$, $\|\tensor{A}_i-\tensor{A}_i'\|\leq\epsilon$, we have $\Jac{\phi}{\tensor{B}',\tensor{A}_1',\ldots,\tensor{A}_{r-2}'} > \frac{\mu}{2}$, where $\epsilon$ and $\mu$ do not depend on $\tensor{B}$.
	\end{rem}
	
	Now we prove \cref{thm_main_for_higher_ranks}.
	\begin{proof}[Proof of \cref{thm_main_for_higher_ranks}]
	Recall the surjective map $\phi : \sigma_2\times {\Var{S}^{\times (r-2)}} \to \sigma_r$ from \cref{lemma_phi}. From \cref{thm_main} and the fact that $\kappa(\tensor{B})=\kappa(t\tensor{B})$ for $t>0$, there exists a tensor $\tensor{B}\in \sigma_2$ such that for every $\delta>0$ we have
		\[
		\int_{\|\tensor{B}'-\tensor{B}\|<\delta,\, \tensor{B}'\in\mathcal{N}_2}\kappa(\tensor{B}')\,\d\tensor{B}'=\infty.
		\]
	From \cref{important_lemma,rmk:l44}, there exist tensors $\tensor{A}_1,\ldots,\tensor{A}_{r-2} \in \Var{S}$ such that
		\[
		\Jac{\phi}{\tensor{B}',\tensor{A}_1',\ldots,\tensor{A}_{r-2}'}> \frac{\mu}{2}
		\]
	for all  $\tensor{B}',\tensor{A}_1',\ldots,\tensor{A}_{r-2}'$ such that $\|\tensor{B}'-\tensor{B}\|<\epsilon,\|\tensor{A}'_i-\tensor{A}_i\|<\epsilon$, and $\tensor{B}'\in\mathcal{N}_2$. Let $\mathcal{U}\subseteq \mathcal{N}_2\times \Var{S}^{\times r-2}$ be the set of all $\tensor{B}',\tensor{A}_1',\ldots,\tensor{A}_{r-2}'$ satisfying the foregoing conditions. From \cref{lem_kappa_inc}, we have
	\begin{align*}
			&\int_{\mathcal{U}}\Jac{\phi}{\tensor{B}',\tensor{A}_1',\ldots,\tensor{A}_{r-2}'}\kappa(\tensor{B}'+\tensor{A}_1'+\cdots+\tensor{A}_{r-2}') \, \d\tensor{B}'  \d\tensor{A}_1'\cdots  \d\tensor{A}_{r-2}'\\
			&\geq\int_{\mathcal{U}}\Jac{\phi}{\tensor{B}',\tensor{A}_1',\ldots,\tensor{A}_{r-2}'}\kappa(\tensor{B}')\,\d\tensor{B}'\d\tensor{A}_1'\cdots \d\tensor{A}_{r-2}'=\infty.
		\end{align*}
	Moreover, by \cref{important_lemma} and the inverse function theorem, by taking small enough $\epsilon$ and $\delta$ we can assume that $\phi|_\mathcal{U}$ is a diffeomorphism onto its image\footnote{This is different from $\phi|_{\phi^{-1}(\phi(\mathcal{U}))}$ being a diffeomorphism. Indeed, that mapping is in general finite-to-one.} and hence $\phi(\mathcal{U})$ is open. The coarea formula \cref{eqn_coarea} thus applies yielding
	\begin{multline*}
		\int_{\tensor{A}\in \phi(\Var{U})}\kappa(\tensor{A})\, \d\tensor{A}
		= \\ \int_{\mathcal{U}}\Jac{\phi}{\tensor{B}',\tensor{A}_1',\ldots,\tensor{A}_{r-2}'}\kappa(\tensor{B}'+\tensor{A}_1',\ldots,\tensor{A}_{r-2}')\, \d\tensor{B}' \d\tensor{A}_1'\cdots  \d\tensor{A}_{r-2}'
		= \infty.
		\end{multline*}
		The theorem follows since $\phi(\mathcal{U})\subseteq\sigma_r$.
	\end{proof}
	
	\section{The angular condition number of tensor rank decomposition}\label{sec:angular_CN}
	
	In this section we prove \cref{thm_main2}. As in the previous section, to ease notation, we abbreviate $\Var M_2:= \Var M_{2;n_1,\ldots,n_d}$, $\Var N_2:= \Var N_{2;n_1,\ldots,n_d}$, $\Var S_2:= \Var S_{2;n_1,\ldots,n_d}$, and $\sigma_2:= \sigma_{2;n_1,\ldots,n_d}$.
	
	\subsection{A characterization of the angular condition number as a singular value}
	
	We first derive a formula for the angular condition number in terms of singular values, similar to the one from \cref{characterization_CN}.
	Recall from \cref{def_kappa_ang} that the angular condition number for rank $r=2$ is
	\begin{equation*}
	\Kang(\tensor{A}) := \lim_{\epsilon \to 0}\;\; \sup_{\substack{\Vert \Delta\tensor{A}\Vert <\epsilon,\\ \tensor{A}+\Delta\tensor{A}\in\sigma_{2}}} \; \frac{\Vert ((p\times p)\circ\Phi^{-1}_\tuple{a})(\tensor{A}) - ((p\times p)\circ\Phi^{-1}_\tuple{a})(\tensor{A}+\Delta\tensor{A}) \Vert}{\Vert \Delta\tensor{A}\Vert},
	\end{equation*}
	where $p: \R^{n_1\times \cdots\times n_d} \to \mathbb{S}(\R^{n_1\times \cdots\times n_d})$ is the canonical projection onto the sphere and where $\Phi^{-1}_\tuple{a}$ is a local inverse of $\Phi:\Var S \times \Var S \to \sigma_2$
	at $\tuple{a}\in\Var{S}^{\times 2}$ with $\tensor{A}=\Phi(\tuple{a})$. As before, the value of $\Kang$ on $\sigma_2\setminus\Var{N}_2$ is not relevant for our analysis, so we do not specify it.
	
	\begin{prop}\label{prop:cnangular}
	Under \cref{assumpt1}, let {$\tensor{A} = \lambda \,\vect{u}^1\otimes \cdots \otimes \vect{u}^d + \mu \,\vect{v}^1\otimes \cdots \otimes \vect{v}^d \in \Var{N}_2$}, where for $1\leq k\leq d$ we have $\vect{u}^k,\vect{v}^k\in\mathbb{S}(\R^{n_k})$. Recall from \cref{def_L_and_M} the definitions of the matrices $M$ and~$L$, associated to $\tensor{A}$. The following equality holds:
		\[
		\Kang(\tensor{A})=\frac{1}{\varsigma_{\min}((\Id-MM^\dagger)L)},
		\]
	as far as the right--hand term is finite.
	\end{prop}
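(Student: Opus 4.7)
The plan is to recast the angular condition number as the operator norm of the differential
\[
 dF_\tensor{A} := d\bigl((p\times p)\circ \Phi_\tuple{a}^{-1}\bigr)(\tensor{A}) : \Tang{\tensor{A}}{\Var{N}_2} \to \Tang{\tensor{U}}{\Sp} \times \Tang{\tensor{V}}{\Sp},
\]
and then compute this norm in the basis furnished by $M$ and $L$.

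First I would parameterize tangent vectors to $\Var N_2$ at $\tensor{A}=\lambda\tensor{U}+\mu\tensor{V}$. From \cref{tangent_space_N} and the differentiation of $\psi$ in \cref{def_psi}, one checks that $\Tang{\lambda\tensor{U}}{\Var S} = \R\tensor{U}\oplus \cspan(\lambda L_1)$ and similarly for $\mu\tensor{V}$; hence every $\Delta\tensor{A}\in \Tang{\tensor{A}}{\Var N_2}$ can be written uniquely as
\[
\Delta\tensor{A} = M\boldsymbol{\alpha} + L\vect{w},\qquad \boldsymbol{\alpha}\in\R^2,\ \vect{w}=(\vect{w}_1,\vect{w}_2)\in\R^{2(\Sigma-1)},
\]
because $[M,L]$ has the full column rank $2\Sigma$ at any $\tensor{A}\in\Var N_2$ (this is exactly the Terracini matrix \cref{eqn_nice_expr_terracini} rearranged, and by \cref{characterization_CN} it has $\kappa(\tensor{A})^{-1}=\varsigma_{\min}>0$).

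Second I would push forward by $d(p\times p)_\tuple{a}\circ (d\Phi_\tuple{a})^{-1}$. Inverting $d\Phi$ simply splits $\Delta\tensor{A}$ into the two tangent summands at $\lambda\tensor{U}$ and $\mu\tensor{V}$, namely $\Delta\tensor{A}_1=\alpha_1\tensor{U}+\lambda L_1\vect{w}_1$ and $\Delta\tensor{A}_2=\alpha_2\tensor{V}+\mu L_2\vect{w}_2$. Using the standard formula
\[
 dp_x(v) = \tfrac{1}{\|x\|}\bigl(v - \langle v, x/\|x\|\rangle\, x/\|x\|\bigr),
\]
and exploiting that the columns of $L_1$ are orthogonal to $\tensor{U}$ (and similarly for $L_2,\tensor{V}$), a direct calculation gives
\[
 dp_{\lambda\tensor{U}}(\Delta\tensor{A}_1) = L_1\vect{w}_1,\qquad dp_{\mu\tensor{V}}(\Delta\tensor{A}_2) = L_2\vect{w}_2,
\]
the key point being that the factor $1/\lambda$ from normalization cancels the $\lambda$ inside $\lambda L_1\vect{w}_1$. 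Because the columns of $L_1$ and $L_2$ are orthonormal we obtain $\|dF_\tensor{A}(\Delta\tensor{A})\|^2 = \|\vect{w}_1\|^2+\|\vect{w}_2\|^2 = \|\vect{w}\|^2$.

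Third I would carry out the optimization. By the above,
\[
 \Kang(\tensor{A})\;=\;\sup_{(\boldsymbol{\alpha},\vect{w})\ne 0}\frac{\|\vect{w}\|}{\|M\boldsymbol{\alpha}+L\vect{w}\|}.
\]
For fixed $\vect{w}$, the $\boldsymbol{\alpha}$ that minimizes the denominator is the least-squares solution $\boldsymbol{\alpha}^\star=-M^\dagger L\vect{w}$, for which $M\boldsymbol{\alpha}^\star+L\vect{w}=(\Id-MM^\dagger)L\vect{w}$. Maximizing the resulting ratio over $\vect{w}$ gives
\[
 \Kang(\tensor{A})\;=\;\sup_{\vect{w}\ne 0}\frac{\|\vect{w}\|}{\|(\Id-MM^\dagger)L\vect{w}\|}\;=\;\frac{1}{\varsigma_{\min}\bigl((\Id-MM^\dagger)L\bigr)},
\]
which is exactly the claim, provided $(\Id-MM^\dagger)L$ is injective (i.e., the right-hand side is finite, as we assumed).

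The main obstacle is the bookkeeping in the second step: the explicit factor $\lambda$ in $L=[\lambda L_1,\mu L_2]$ is essential for the $\lambda$'s to cancel and produce the clean identity $\|dF(\Delta\tensor{A})\|=\|\vect{w}\|$. Once this cancellation is in place, the remainder of the argument is a standard least-squares minimization yielding the projector $\Id-MM^\dagger$.
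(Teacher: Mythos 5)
Your proof is correct and follows essentially the same route as the paper's: both express $\Kang(\tensor{A})$ as the operator norm of $d(p^{\times 2}\circ\Phi_\tuple{a}^{-1})$, decompose a tangent vector $\dot{\tensor{A}}\in\Tang{\tensor{A}}{\Var N_2}$ in the columns of $M$ and $L$, observe that the normalization factor $1/\lambda$ (resp.\ $1/\mu$) in $\deriv{p}{\tensor{A}_i}$ cancels the $\lambda$ (resp.\ $\mu$) inside $L=[\lambda L_1,\mu L_2]$ to produce $\|dF(\dot{\tensor{A}})\|=\|\vect{w}\|$, and then extremize. The only difference is that the paper finishes by applying the pseudo-inverse $((\Id-MM^\dagger)L)^\dagger$ and the identity $(PL)^\dagger P=(PL)^\dagger$, whereas you carry out the supremum directly as a two-stage least-squares optimization in $\boldsymbol{\alpha}$ and $\vect{w}$; these are equivalent reformulations of the same computation.
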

	\begin{proof}
	By \cref{prop:delotropaper}, any local inverse $\Phi^{-1}_\tuple{a}$ is differentiable at $\tensor{A} = \Phi(\tuple{a}) \in \Var N_2$. The projection $p$ is also differentiable, so that
			\[
			\Kang(\tensor{A}) = \Vert \deriv{(p^{\times 2} \circ\Phi^{-1}_\tuple{a})}{\tensor{A}} \Vert_2,
			\]
	where $\Vert \cdot \Vert_2$ is the spectral norm from \cref{def_spectral_norm}. We compute this norm.
	
	Let $\dot{\tensor{A}} \in \Tang{\tensor{A}}{\Var N_2}$ and $(\dot{\tensor{A}}_1, \dot{\tensor{A}}_2) = \deriv{\Phi^{-1}_\tuple{a}}{\tensor{A}}(\dot{\tensor{A}})$. Then, by linearity of the derivative, we have $\dot{\tensor{A}} = \dot{\tensor{A}}_1 + \dot{\tensor{A}}_2$. Furthermore, for $i=1,2$, the derivative $\deriv{p}{\tensor{A}_i}$ is the orthogonal projection onto the orthogonal complement of $\tensor{A}_i$ in $\R^\Pi$. According to this we decompose $\dot{\tensor{A}}_1$ and $\dot{\tensor{A}}_2$ as
		\begin{align*}
		\dot{\tensor{A}}_1 &= \dot{\tensor{A}}_1^\perp + \dot{\lambda} \tensor{U}, \text{ where }  \tensor{U} = \frac{\tensor{A}_1}{\Vert \tensor{A}_1\Vert} \text{ and } \dot{\tensor{A}}_1^\perp\in (\tensor{A}_1)^\perp,\\
		\dot{\tensor{A}}_2 &= \dot{\tensor{A}}_2^\perp + \dot\mu \tensor{V}, \text{ where }  \tensor{V} = \frac{\tensor{A}_2}{\Vert \tensor{A}_2\Vert} \text{ and } \dot{\tensor{A}}_2^\perp \in (\tensor{A}_2)^\perp.
	\end{align*}
	Then, we have $\deriv{(p^{\times2} \circ\Phi^{-1}_\tuple{a})}{\tensor{A}}(\dot{\tensor{A}}) = (\dot{\tensor{A}}_1^\perp/\|\tensor{A}_1\|, \dot{\tensor{A}}_2^\perp/\|\tensor{A}_2\|)$ and, consequently,
	\begin{equation}\label{eq_derivative_kang}
	\Vert \deriv{(p^{\times 2} \circ\Phi^{-1}_\tuple{a})}{\tensor{A}}(\dot{\tensor{A}})\Vert = \sqrt{\frac{\Vert \dot{\tensor{A}}_1^\perp \Vert^2}{\Vert \tensor{A}_1\Vert^2}  +  \frac{\Vert\dot{\tensor{A}}_2^\perp\Vert^2}{\Vert \tensor{A}_2\Vert^2}}.
	\end{equation}
	Recall from \cref{def_L_and_M} the matrices $L=\begin{bmatrix} \lambda L_1 & \mu L_2\end{bmatrix}$ and $M = \begin{bmatrix} \tensor{U} & \tensor{V} \end{bmatrix}$. We can find vectors $\vect{x}_1,\vect{x}_2 \in \R^{\Sigma-1}$  with
	$\dot{\tensor{A}}_1^\perp = \lambda L_1 \vect{x}_1$ and $\dot{\tensor{A}}_2^\perp = \mu L_2 \vect{x}_2$, and such that $\Vert \dot{\tensor{A}}_1^\perp\Vert = \lambda \Vert \vect{x}_1\Vert$ and $\Vert \dot{\tensor{A}}_2^\perp\Vert = \mu \Vert \vect{x}_2\Vert$. Observe that $\lambda = \Vert \tensor{A}_1\Vert$ and $\mu = \Vert \tensor{A}_2\Vert$. This yields
	\begin{equation}\label{eq5.1}
	\Vert \deriv{(p^{\times 2} \circ\Phi^{-1}_\tuple{a})}{\tensor{A}}(\dot{\tensor{A}})\Vert = \sqrt{\Vert \vect{x}_1\Vert^2 + \Vert\vect{x}_2\Vert^2}.
	\end{equation}
	Writing
	\[
	\dot{\tensor{A}} = \dot{\tensor{A}}_1 +  \dot{\tensor{A}}_2 =  L\begin{bmatrix} \vect{x}_1 \\ \vect{x}_2 \end{bmatrix} + M \begin{bmatrix} \dot{\lambda} \\ \dot{\mu} \end{bmatrix}, \text { we get }(\Id - MM^\dagger)\dot{\tensor{A}} = (\Id - MM^\dagger)L\begin{bmatrix} \vect{x}_1 \\ \vect{x}_2 \end{bmatrix}.
	\]
	Since we are assuming that $(\Id - MM^\dagger)L$ is injective (for $\varsigma_{\min}((\Id-MM^\dagger)L)\neq0$), it has a left inverse and we can write
	\begin{equation}\label{eq5.2}\left((\Id - MM^\dagger)L\right)^\dagger(\Id - MM^\dagger)\dot{\tensor{A}} = \begin{bmatrix} \vect{x}_1 \\ \vect{x}_2 \end{bmatrix}.
	\end{equation}
	Combining \cref{eq5.1} and \cref{eq5.2} we see that
	\begin{align*}
	\Kang(\tensor{A}) =& \Vert \left((\Id - MM^\dagger)L\right)^\dagger(\Id - MM^\dagger)\Vert_2\\=&\Vert \left((\Id - MM^\dagger)L\right)^\dagger\Vert_2\\=&\varsigma_{\min}((\Id - MM^\dagger)L)^{-1},
	\end{align*}
	the second equality from $ \left(PL\right)^\dagger P= \left(PL\right)^\dagger$, which is a basic property of the Moore--Penrose pseudoinverse holding for any orthogonal projector $P$. This finishes the proof.
	\end{proof}
	
	\subsection{Proof of Theorem \ref{thm_main2}}\label{sec:kangexp}
	Now comes the actual proof of \cref{thm_main2}. {Proceeding in exactly the same way as in \cref{sec_step1_parameterization}} and using \cref{prop:cnangular}, we get
	\begin{equation}\label{eq5.3}
	\mathbb{E}\, \Kang(\tensor{A}) = \frac{1}{2^{2d-1}C_2}\int_{((0,\infty) \times \Var{P})^{\times 2}}\frac{\mathrm{vol}(Q) \, e^{-\frac{\Vert \lambda  \tensor{U} + \mu  \tensor{V}\Vert^2}{2}} }{\varsigma_{\min}((\Id-MM^\dagger)L) }\, \d{}\lambda\,\d{}\tuple{u}\,\d{}\mu\,\d{}\tuple{v},
	\end{equation}
	where {$C_2 = C_{2;n_1,\ldots,n_d}$ is as in \cref{def_gaussian_tensor}}, $\Var{P}$ is as in \cref{def_P}, $Q = \begin{bmatrix} L & M \end{bmatrix}$ is as in \cref{eqn_def_Q}, {the volume $\mathrm{vol}$ is as in \cref{def_vol_U},} and
	\[
	\tuple{u} = (\sten{u}{}{1},\ldots, \sten{u}{}{d}), \;
	\tuple{v} = (\sten{v}{}{1},\ldots, \sten{v}{}{d}), \;
	\tensor{U} = \sten{u}{}{1} \otimes \cdots \otimes \sten{u}{}{d},\; \text{ and }
	\tensor{V} = \sten{v}{}{1} \otimes \cdots \otimes \sten{v}{}{d},
	\]
	is as in \cref{def_U_V}, so that $\tensor{A} = \lambda \tensor{U} + \mu \tensor{V}$.
	{Next, we relate $\mathrm{vol}(Q)$ to the volume of $(\Id - M M^\dagger)L$.}
	\begin{lemma}\label{lem:la}
	We have
		\(
		\mathrm{vol}(Q)
		= \mathrm{vol}(M)\,\mathrm{vol}((\Id-MM^\dagger)L).
		\)
	\end{lemma}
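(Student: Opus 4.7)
The plan is to prove this as a standard Gram–matrix identity via the Schur complement formula. Writing $Q = \begin{bmatrix} L & M \end{bmatrix}$, the volume squared is $\mathrm{vol}(Q)^2 = \det(Q^T Q)$, and the Gram matrix has the block form
\[
Q^T Q = \begin{bmatrix} L^T L & L^T M \\ M^T L & M^T M \end{bmatrix}.
\]
First I would verify that $M$ has full column rank, so that its Gram matrix is invertible and the Schur complement is well-defined. This is immediate: because $\tensor{A} \in \Var{N}_2$ is $2$-identifiable of rank exactly $2$, the two rank-$1$ summands $\tensor{U}$ and $\tensor{V}$ (which are the columns of $M$, see \cref{def_M}) must be linearly independent, for otherwise $\tensor{A}$ would have rank at most $1$.

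Next I would apply the Schur complement formula to get
\[
\det(Q^T Q) = \det(M^T M) \cdot \det\bigl(L^T L - L^T M (M^T M)^{-1} M^T L\bigr).
\]
Since $M$ has full column rank, $M M^\dagger = M(M^T M)^{-1} M^T$ is the orthogonal projector onto the column space of $M$, and therefore $\Id - MM^\dagger$ is the orthogonal projector onto its orthogonal complement. The second factor above thus equals $\det\bigl(L^T (\Id - MM^\dagger) L\bigr)$. Using that $\Id - MM^\dagger$ is symmetric and idempotent, this further simplifies to
\[
\det\bigl(L^T (\Id - MM^\dagger)^T (\Id - MM^\dagger) L\bigr) = \det\bigl(((\Id - MM^\dagger) L)^T ((\Id - MM^\dagger) L)\bigr) = \mathrm{vol}((\Id - MM^\dagger)L)^2.
\]
Combining with $\det(M^T M) = \mathrm{vol}(M)^2$ and taking square roots yields the claimed identity.

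There is no real obstacle here; the statement is the geometric fact that the parallelepiped spanned by the columns of $Q$ has as base the parallelepiped spanned by $M$ and as ``height'' the volume of the orthogonal projections of the columns of $L$ onto the orthogonal complement of the column span of $M$. The only verification needed beyond the algebra above is the full-column-rank condition on $M$, which as noted follows from the assumption $\tensor{A} \in \Var{N}_2$.
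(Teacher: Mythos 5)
Your proof is correct. You use the Schur complement identity for block Gram matrices directly: writing $Q^T Q$ in $2\times 2$ block form with pivot block $M^T M$, you read off $\det(Q^T Q) = \det(M^T M)\det(L^T(\Id - MM^\dagger)L)$ and identify the second factor as $\mathrm{vol}((\Id - MM^\dagger)L)^2$ via idempotence of the projector. The paper reaches the same conclusion slightly more elementarily: it augments $Q$ by an orthonormal basis $Q^\perp$ of the orthogonal complement so that $\mathrm{vol}(Q) = |\det[M\;\; L\;\; Q^\perp]|$, performs a unit-determinant block column operation to replace $L$ by $(\Id - MM^\dagger)L$, and then observes that the Gram matrix of $[M\;\;(\Id-MM^\dagger)L]$ is block diagonal because $\Id - MM^\dagger$ projects onto $\mathrm{span}(M)^\perp$. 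Both proofs encode the same block-elimination idea; yours is more compact because it invokes the Schur complement determinant formula as a black box, while the paper unwinds it by hand. Your explicit verification that $M$ has full column rank (so $M^T M$ is invertible and $M^\dagger = (M^T M)^{-1} M^T$) is a welcome addition; the paper leaves this implicit, relying on the fact that the lemma is applied on a set where $\tensor{U}$ and $\tensor{V}$ are linearly independent.
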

	\begin{proof}
	Let $Q^\perp$ be a matrix whose columns contain an orthonormal basis for the orthogonal complement of the column span of $Q$. Then, from the definition,
	{\small{
		\[
		\mathrm{vol}(Q)
		= \mathrm{vol}\bigl( \begin{bmatrix} Q & Q^\perp \end{bmatrix} \bigr)
		=\bigl|\det \bigl( \begin{bmatrix} M & L & Q^\perp \end{bmatrix} \bigr)\bigr|
		=\left|\det\left(
		\begin{bmatrix} M & L & Q^\perp \end{bmatrix}
		\begin{bmatrix}
		\Id&-M^\dagger L&0\\0&\Id&0\\0&0&\Id
		\end{bmatrix}\right)\right|,
		\]
	}}
	\noindent where in the last step we just multiplied by a matrix whose determinant is $1$. Performing the inner multiplication we then get
		\[
		\mathrm{vol}(Q)= \bigl|\det\bigl(\begin{bmatrix} M & (\Id-MM^\dagger)L & Q^\perp \end{bmatrix} \bigr)\bigr|
		= \mathrm{vol}\bigl( \begin{bmatrix} M & (\Id-MM^\dagger)L \end{bmatrix} \bigr).
		\]
	{These two blocks are mutually orthogonal, since $(\Id-MM^\dagger)$ is the projection on the orthogonal complement of the span of $M$, and hence the volume is the product of the volumes corresponding to each block. The assertion follows.}
	\end{proof}

	We use \cref{lem:la} to rewrite \cref{eq5.3} as
	{\small{
	\begin{align}
	\nonumber \mathbb{E}\, \Kang(\tensor{A})
	&= \frac{1}{2^{2d-1}C_2}\int_{((0,\infty) \times \Var{P})^{\times 2}}\frac{\mathrm{vol}(M)\,\mathrm{vol}((\Id-MM^\dagger)L)}{ \varsigma_{\min}((\Id-MM^\dagger)L) }\,  e^{-\frac{\Vert \lambda  \tensor{U} + \mu  \tensor{V}\Vert^2}{2}}\, \d{}\lambda\,\d{}\tuple{u}\,\d{}\mu\,\d{}\tuple{v}\\
	&= \frac{1}{2^{2d-1}C_2}\int_{((0,\infty) \times \Var{P})^{\times 2}}\,\mathrm{vol}(M) \; q((\Id-MM^\dagger)L)\,  e^{-\frac{\Vert \lambda  \tensor{U} + \mu  \tensor{V}\Vert^2}{2}}\, \d{}\lambda\,\d{}\tuple{u}\,\d{}\mu\,\d{}\tuple{v},\label{eq5.4}
	\end{align}
	}}
	where $q$ is as in \cref{def_q}.
	Recall from \cref{def_M} that $M$ is independent of $\lambda$ and $\mu$. We first compute the integral over $\lambda, \mu$ using the next lemma. We prove the lemma in \cref{proof_lem:inner}.
	\begin{lemma}\label{lem:inner}
	Let $L_1,L_2$ be the matrices defined as in \cref{def_L_i}, such that $L=\begin{bmatrix} \lambda L_1 & \mu L_2\end{bmatrix}$. Let
		\[
		J_\mathrm{inner}=\int_{(0,\infty)^2} q((\Id-MM^\dagger)L)\;e^{-\frac{\Vert \lambda  \tensor{U} + \mu  \tensor{V}\Vert^2}{2}}\, \d{}\lambda\,\d{}\mu\,.
		\]
		Then,
		\[
		J_\mathrm{inner} = 2^{\frac{2\Sigma-3}{2}} \Gamma\left(\frac{2\Sigma-1}{2}\right)\int_{0}^{\frac{\pi}{2}}\frac{q\left((\Id-MM^\dagger)\begin{bmatrix} \cos(\theta)L_1 &  \sin(\theta)L_2\end{bmatrix}\right)}{{\|\cos(\theta)\tensor{U}+\sin(\theta)\tensor{V}\|^{2\Sigma-1}}} \d{}\theta.
		\]
		\end{lemma}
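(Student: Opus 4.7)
The plan is to evaluate the two-dimensional integral over $(\lambda,\mu)\in(0,\infty)^2$ by passing to polar coordinates and exploiting the homogeneity of the function $q$. Specifically, I would substitute $\lambda = t\cos\theta$, $\mu = t\sin\theta$ for $t\in(0,\infty)$ and $\theta\in(0,\pi/2)$, whose Jacobian determinant is $t$. Under this substitution the exponent becomes $\|\lambda\tensor{U}+\mu\tensor{V}\|^2 = t^2\|\cos(\theta)\tensor{U}+\sin(\theta)\tensor{V}\|^2$, and the matrix inside $q$ factors as
\[
(\Id-MM^\dagger)L = (\Id-MM^\dagger)\begin{bmatrix}\lambda L_1 & \mu L_2\end{bmatrix} = t \cdot (\Id-MM^\dagger)\begin{bmatrix}\cos(\theta)L_1 & \sin(\theta)L_2\end{bmatrix}.
\]

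The next step is to apply the scaling property of $q$. The matrix $(\Id-MM^\dagger)[\cos(\theta)L_1,\sin(\theta)L_2]$ has $2(\Sigma-1)$ columns, so from \cref{def_q} and the fact that $\varsigma_i(tR)=t\varsigma_i(R)$, we obtain $q(tR) = t^{2\Sigma-3} q(R)$. Applying this and invoking Fubini's theorem gives
\[
J_\mathrm{inner}
= \int_0^{\pi/2} q\!\left((\Id-MM^\dagger)\begin{bmatrix}\cos(\theta)L_1 & \sin(\theta)L_2\end{bmatrix}\right) \int_0^\infty t^{2\Sigma-2}\, e^{-a(\theta)^2 t^2/2}\,\d t\,\d\theta,
\]
where $a(\theta):=\|\cos(\theta)\tensor{U}+\sin(\theta)\tensor{V}\|$, which is strictly positive on $(0,\pi/2)$ because $\tensor{U}$ and $\tensor{V}$ are distinct points of the sphere (since $\tensor{A}\in\Var{N}_2$ has rank $2$ and is $2$-identifiable, so in particular not proportional).

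The remaining one-dimensional integral is a standard Gaussian moment: substituting $u = a(\theta) t/\sqrt{2}$ and using $\int_0^\infty u^{k-1}e^{-u^2}\,\d u = \tfrac{1}{2}\Gamma(k/2)$, we get
\[
\int_0^\infty t^{2\Sigma-2}\, e^{-a(\theta)^2 t^2/2}\,\d t = \frac{2^{(2\Sigma-3)/2}\,\Gamma\!\left(\tfrac{2\Sigma-1}{2}\right)}{a(\theta)^{2\Sigma-1}}.
\]
Substituting back yields the claimed closed form.

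The only subtle point is justifying the scaling identity for $q$ when the rescaled matrix may fail to be injective on a measure-zero subset of $\theta$: since $\varsigma_i(tR)=t\varsigma_i(R)$ for every singular value (injective or not), the product formula $q(tR)=t^{2\Sigma-3}q(R)$ remains valid as an identity of top $(2\Sigma-3)$ singular values, which is all that is needed here. Everything else—the polar change of variables, the Fubini step, and the Gaussian integral in $t$—is routine.
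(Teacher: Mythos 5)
Your proof is correct and follows essentially the same route as the paper's own: change to polar coordinates $(\lambda,\mu)=(t\cos\theta,t\sin\theta)$, pull the factor $t^{2\Sigma-3}$ out of $q$ using the scaling of singular values for a matrix with $2(\Sigma-1)$ columns, and evaluate the remaining one-dimensional Gaussian moment. The added remark about the scaling identity holding even where the matrix fails to be injective is a reasonable observation, but not a departure from the paper's argument.
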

	Inserting the results from this lemma into \cref{eq5.4}, we get
	\begin{equation*}
	\mean \Kang(\tensor{A}) = \frac{2^{\frac{2\Sigma-1}{2} - 2d}\Gamma\left(\frac{2\Sigma-1}{2}\right)}{C_2}\,J_\mathrm{outer}, 
	\end{equation*}
	where
	\[
	J_\mathrm{outer} := \int_{ \Var{P}^{\times 2}}\,\int_{0}^{\frac{\pi}{2}} \frac{\mathrm{vol}(M)\,q\left((\Id-MM^\dagger)\begin{bmatrix}\cos(\theta) L_1 & \sin(\theta) L_2\end{bmatrix}\right)}{{\|\cos(\theta) \tensor{U} + \sin(\theta)\tensor{V}\|^{2\Sigma-1}}}\, \d{}\theta\, \d{}\tuple{u}\,\d{}\tuple{v}.
	\]
	In the remaining part of this section we show that $J_\mathrm{outer}$ is bounded by a constant, which would conclude the proof. We do this by giving a sequence of upper bounds. We have no hope of providing sharp bounds, so rather than keeping track of all the constants, we will exploit the following definition for streamlining the proof.
	\begin{dfn}
			For $A,B\in[0,\infty]$ we will write $A \preceq B$ if $B\in\R$ implies $A\in\R$. That is, $A\preceq B$ is an equivalent statement to ``$B<\infty \Rightarrow A<\infty$''.
	\end{dfn}
	
	First, note that $\mathrm{vol}(M) = \sqrt{1-\langle \tensor{U},\tensor{V}\rangle^2}$, so that
	\[
	J_\mathrm{outer} = {\int_{ \Var{P}^{\times 2} }\int_{0}^{\frac{\pi}{2}} } \frac{\sqrt{1-\langle \tensor{U},\tensor{V}\rangle^2}\,q\left((\Id-MM^\dagger)\begin{bmatrix}\cos(\theta)  L_1 & \sin(\theta) L_2\end{bmatrix}\right)}{{\|\cos(\theta)  \tensor{U} + \sin(\theta) \tensor{V}\|^{2\Sigma-1}}}\, \d{}\theta\d{}\tuple{u}\, \d{}\tuple{v}.
	\]
	Next, we exploit the symmetry of $\Sp(\R^{n_1})$ and transform $\vect{v}^1 \mapsto -\vect{v}^1$. This transformation flips the sign of $\tensor{V}$, but the value of $q$ is not affected. Indeed, the matrix $\Id - M M^\dagger$ still projects onto $\mathrm{span}(\tensor{U}, \tensor{V})^\perp = \mathrm{span}(\tensor{U}, -\tensor{V})^\perp$, and
	$L_2$ is transformed into $L_2 D$, where $D$ is a diagonal matrix with some pattern of $\pm1$ on the diagonal. Since $\left[\begin{smallmatrix}I & \\ & D\end{smallmatrix}\right]$ is an orthogonal transformation, the singular values do not change. Thus, we obtain
	\[
	J_\mathrm{outer} = {\int_{ \Var{P}^{\times 2} }\int_{0}^{\frac{\pi}{2}} } \frac{\sqrt{1 - \langle \tensor{U},\tensor{V}\rangle^2}\,q\left((\Id-MM^\dagger)\begin{bmatrix}\cos(\theta) L_1 & \sin(\theta) L_2\end{bmatrix}\right)}{{\|\cos(\theta) \tensor{U} - \sin(\theta) \tensor{V}\|^{2\Sigma-1}}}\, \d{}\theta\, \d{}\tuple{u}\, \d{}\tuple{v}.
	\]
	The next lemma is proved in \cref{proof_lem:cotaq}.
	\begin{lemma}\label{lem:cotaq}
	Let $\theta\in[0,\tfrac{\pi}{2}]$ and fix $\theta,\tuple{u}$ and $\tuple{v}$. There is a constant $K>0$, depending only on $n_1,\ldots,n_d$ and $d$, such that
	\begin{equation}\label{eqcotaq}
	q\left((\Id-MM^\dagger)\begin{bmatrix}\cos(\theta)  L_1 & \sin(\theta) L_2\end{bmatrix}\right)\leq K \|\tensor{U} - \tensor{V}\|^{\Sigma-1}.
	\end{equation}
	\end{lemma}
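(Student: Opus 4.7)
The plan is to compare $N := (\Id - MM^\dagger)\begin{bmatrix}\cos\theta L_1 & \sin\theta L_2\end{bmatrix}$ to a reference matrix $N_0$ of rank at most $\Sigma - 2$ for which $\|N - N_0\|_2 = O(\|\tensor{U} - \tensor{V}\|)$. Weyl's perturbation inequality will then force $\Sigma$ of the $2(\Sigma-1)$ singular values of $N$ to be of order $\|\tensor{U} - \tensor{V}\|$, so that $q(N)$, being the product of the $2\Sigma - 3$ largest singular values, necessarily contains $\Sigma - 1$ of those small factors and inherits the claimed bound. When $\|\tensor{U} - \tensor{V}\|$ is bounded below by a fixed $\eta > 0$, the trivial estimate $\|N\|_2 \le 1$ (which follows from $\|[\cos\theta L_1, \sin\theta L_2][\cos\theta L_1, \sin\theta L_2]^T\|_2 \le \cos^2\theta + \sin^2\theta = 1$) gives $q(N) \le 1 \le \eta^{-(\Sigma-1)}\|\tensor{U} - \tensor{V}\|^{\Sigma-1}$, and the two regimes combine to a uniform constant $K$.

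In the small-$\|\tensor{U}-\tensor{V}\|$ regime I first match the orthonormal bases. Since $\tau := \langle \tensor{U}, \tensor{V}\rangle \le 0$ forces $\|\tensor{U}-\tensor{V}\| \ge \sqrt{2}$ and falls under the trivial bound, I may assume $\tau > 0$ and, by flipping signs of the $\vect{v}^k$ in pairs, ensure $\langle \vect{u}^k,\vect{v}^k\rangle \ge 0$ for every $k$---an operation that leaves $q(N)$ unchanged. For each $k$, let $R^k$ be the rotation of $\R^{n_k}$ that fixes $\mathrm{span}(\vect{u}^k,\vect{v}^k)^\perp$ and rotates $\vect{u}^k \mapsto \vect{v}^k$ in the remaining plane; then $\|R^k - \Id\|_2 \le \|\vect{u}^k - \vect{v}^k\|$. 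Choosing $\dot{V}^k := R^k\dot{U}^k$ and $\mathcal{R} := R^1 \otimes \cdots \otimes R^d$ yields $L_2 = \mathcal{R}L_1$ and, by telescoping, $\|L_2 - L_1\|_2 \le \sum_k \|R^k - \Id\|_2 \le d\,\|\tensor{U} - \tensor{V}\|$, where the last step uses $\|\vect{u}^k - \vect{v}^k\| \le \|\tensor{U} - \tensor{V}\|$ (valid because $\prod_j \langle \vect{u}^j, \vect{v}^j\rangle \le \langle \vect{u}^k, \vect{v}^k\rangle$ when all inner products lie in $[0,1]$).

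To construct $N_0$, put $\delta := (\tensor{V} - \tau\tensor{U})/\sqrt{1-\tau^2}$, a unit vector in $\tensor{U}^\perp$, and decompose $\delta = \delta_\parallel + \delta_\perp$ with $\delta_\parallel$ the orthogonal projection onto $\mathrm{span}(L_1)$. Expanding $\tensor{V} = \bigotimes_k(\langle \vect{u}^k, \vect{v}^k\rangle \vect{u}^k + \vect{w}^k)$ with $\vect{w}^k \perp \vect{u}^k$ shows that $\tensor{V} - \tau\tensor{U}$ splits into a first-order piece lying in $\mathrm{span}(L_1)$ plus a remainder of norm $O(\|\tensor{U}-\tensor{V}\|^2)$; dividing by $\sqrt{1-\tau^2}$, which is of order $\|\tensor{U}-\tensor{V}\|$, yields $\|\delta_\perp\| \le C_2\|\tensor{U}-\tensor{V}\|$. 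Writing $\tilde{\delta} := \delta_\parallel/\|\delta_\parallel\| = L_1\vect{c}$ with $\|\vect{c}\| = 1$, I define
\[
N_0 := (\Id - \tensor{U}\tensor{U}^T - \tilde{\delta}\tilde{\delta}^T)[\cos\theta L_1,\sin\theta L_1] = L_1(\Id - \vect{c}\vect{c}^T)[\cos\theta\,\Id,\sin\theta\,\Id],
\]
the second identity following from $L_1 \perp \tensor{U}$. Since $\Id - \vect{c}\vect{c}^T$ has rank $\Sigma - 2$, so does $N_0$, and thus $\varsigma_i(N_0) = 0$ for $i \ge \Sigma - 1$.

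To finish, observe that $MM^\dagger$ projects orthogonally onto $\mathrm{span}(\tensor{U},\tensor{V}) = \mathrm{span}(\tensor{U},\delta)$, and since $\{\tensor{U},\delta\}$ is an orthonormal basis of this span we have $MM^\dagger = \tensor{U}\tensor{U}^T + \delta\delta^T$. A direct calculation then gives
\[
N - N_0 = (\Id - MM^\dagger)[0,\sin\theta(L_2 - L_1)] + (\tilde{\delta}\tilde{\delta}^T - \delta\delta^T)[\cos\theta L_1,\sin\theta L_1],
\]
whose first summand has spectral norm at most $\|L_2 - L_1\|_2$ and whose second is bounded by $\|\tilde{\delta}\tilde{\delta}^T - \delta\delta^T\|_2 \le 2\|\tilde{\delta} - \delta\| = O(\|\delta_\perp\|)$, yielding $\|N - N_0\|_2 \le C_3\|\tensor{U}-\tensor{V}\|$. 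Weyl's inequality then provides $\varsigma_i(N) \le C_3\|\tensor{U}-\tensor{V}\|$ for all $\Sigma$ indices $i \ge \Sigma - 1$, while $\varsigma_i(N) \le \|N\|_2 \le 1$ for the remaining $\Sigma - 2$ indices, so
\[
q(N) = \prod_{i=1}^{2\Sigma-3}\varsigma_i(N) \le (C_3\|\tensor{U}-\tensor{V}\|)^{\Sigma-1},
\]
the product containing exactly $\Sigma - 2$ large and $(2\Sigma-3)-(\Sigma-2) = \Sigma - 1$ small factors. The main technical obstacle is the tangency estimate $\|\delta_\perp\| = O(\|\tensor{U}-\tensor{V}\|)$, which requires carefully bookkeeping the higher-order cross terms in the multilinear expansion of $\tensor{V}$ about $\tensor{U}$.
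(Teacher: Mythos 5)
Your proposal is correct and takes a genuinely different route from the paper's. The paper drops the $\cos\theta,\sin\theta$ scaling via $q(AB)\le q(A)$, then applies the orthogonal change of basis $\begin{bmatrix}L_1&L_2\end{bmatrix}\mapsto\begin{bmatrix}R_\downarrow&R_\uparrow\end{bmatrix}$ from \cref{def_R_i} together with the explicit choice of tangent frames from \cref{choice_of_tangent_vectors}, computes the Gram matrix of $(\Id-MM^\dagger)\begin{bmatrix}R_\downarrow&R_\uparrow\end{bmatrix}$ exactly, and reads off a block-diagonal structure $\diag(F_\uparrow,F_\downarrow,E_\uparrow-\frac{z}{1-z}\vect{g}\vect{g}^T,E_\downarrow+\frac{z}{1+z}\vect{g}\vect{g}^T)$ whose eigenvalues it then bounds block by block (with Weyl/Gershgorin arguments and some elementary but delicate inequalities in the $\epsilon_k,\delta_k$). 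Your proof instead keeps the $\theta$-scaling, builds a rank-$(\Sigma-2)$ comparison matrix $N_0=L_1(\Id-\vect{c}\vect{c}^T)[\cos\theta\,\Id,\sin\theta\,\Id]$ from the Gram--Schmidt vector $\delta$ and its $\mathrm{span}(L_1)$-component, bounds $\|N-N_0\|_2$ by splitting $N-N_0$ into a term controlled by $\|L_2-L_1\|_2\le d\|\tensor{U}-\tensor{V}\|$ (via the explicit rotations $R^k$ and a telescoping argument) and a term controlled by $\|\tilde\delta-\delta\|=O(\|\delta_\perp\|)$, and finishes with Weyl's singular-value perturbation inequality. Both methods arrive at the count ``$\Sigma$ small, $\Sigma-2$ bounded'' singular values. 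Your route is more geometric and avoids the explicit Gram-matrix computation; its one delicate step, the tangency estimate $\|\delta_\perp\|=O(\|\tensor{U}-\tensor{V}\|)$, does go through once you observe that the $|S|\ge 2$ terms in the multilinear expansion $\tensor{V}=\bigotimes_k(c_k\vect{u}^k+\vect{w}^k)$ are mutually orthogonal and orthogonal to $\mathrm{span}(\tensor{U})\oplus\mathrm{span}(L_1)$ (by \cref{inner_prod_rank_one}), have combined norm $O(\max_k\|\vect{w}^k\|^2)=O(\|\tensor{U}-\tensor{V}\|^2)$, and that $\sqrt{1-\tau^2}\ge\|\tensor{U}-\tensor{V}\|/\sqrt{2}$ when $\tau>0$; you correctly flag this as the main technical point. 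Two minor completeness remarks: choose $\eta<\sqrt{2}$ so that the small-norm regime guarantees $\tau>0$ and hence that the sign-flipping in pairs can make every $\langle\vect{u}^k,\vect{v}^k\rangle$ nonnegative, and note that $\|\vect{u}^k-\vect{v}^k\|\le\|\tensor{U}-\tensor{V}\|$ needs all $c_k\in[0,1]$, which that normalization provides.
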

	
	The lemma implies
	\begin{equation}\label{eq5.5}
	J_\mathrm{outer} \preceq {\int_{ \Var{P}^{\times 2} }\int_{0}^{\frac{\pi}{2}} } \frac{\sqrt{1 - \langle \tensor{U},\tensor{V}\rangle^2}\,\|\tensor{U}- \tensor{V}\|^{\Sigma-1}}{{\|\cos(\theta)  \tensor{U}-\sin(\theta) \tensor{V}\|^{2\Sigma-1}}}\, \d{}\theta \, \d{}\tuple{u} \, \d{}\tuple{v}.
	\end{equation}
	For bounding the integral over $\theta$ we need the next lemma, which we prove in \cref{proof_lem:integralauxiliar}.
	\begin{lemma}\label{lem:integralauxiliar}
	Let $a>1,p\geq1$. There exists a constant $K>0$, depending only on $a$, such that for any unit vectors $\vect{x},\vect{y}\in \Sp{}(\R^p)$, $\vect{x}\neq \vect{y}$, we have
		\[
		\int_{0}^{\frac{\pi}{2}}\frac{1}{{\|\cos(\theta)  \vect{x}-\sin(\theta) \vect{y}\|^{a}}} \d{}\theta\leq \frac{K}{\|\vect{x}-\vect{y}\|^{a-1}}.
		\]
	\end{lemma}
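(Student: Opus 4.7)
The plan is to reduce the integral to a standard one-dimensional integral whose behaviour near its singularity can be controlled by a scaling argument. First I would expand
\[
\|\cos(\theta)\vect{x}-\sin(\theta)\vect{y}\|^2 = 1 - \langle \vect{x},\vect{y}\rangle\sin(2\theta),
\]
set $c := \langle \vect{x},\vect{y}\rangle$ and $\delta := \|\vect{x}-\vect{y}\|$, and note $\delta^2 = 2(1-c)$. The substitution $\phi = 2\theta - \pi/2$ gives $\sin(2\theta) = \cos(\phi)$, so
\[
\int_0^{\pi/2}\frac{d\theta}{\|\cos(\theta)\vect{x}-\sin(\theta)\vect{y}\|^{a}}
= \frac{1}{2}\int_{-\pi/2}^{\pi/2}\frac{d\phi}{(1-c\cos\phi)^{a/2}}.
\]

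Next I would split into two cases depending on the size of $c$. If $c \le 1/2$, then $\delta^2\ge 1$, so $1-c\cos\phi\ge 1-c = \delta^2/2$ on the whole interval, and the integral is at most $\pi\,(\delta^2/2)^{-a/2}$; since $\delta$ is bounded above by $\sqrt{2}$, this is trivially bounded by a multiple of $\delta^{1-a}$. For $c > 1/2$, I would use the elementary inequality $1-\cos\phi \ge 2\phi^2/\pi^2$ valid on $[-\pi/2,\pi/2]$, together with $1-c\cos\phi = (1-c) + c(1-\cos\phi)$, to obtain
\[
1-c\cos\phi \;\ge\; \tfrac{\delta^2}{2} + \tfrac{\phi^2}{\pi^2}.
\]
(If $c<0$ one can also flip $\vect{y}\to -\vect{y}$, which changes $c$ to $-c>0$ and only enlarges $\|\vect{x}-\vect{y}\|$, so the resulting bound is still valid.)

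Then I would perform the scaling $\phi = \pi\delta t/\sqrt{2}$, which turns $\tfrac{\delta^2}{2} + \tfrac{\phi^2}{\pi^2}$ into $\tfrac{\delta^2}{2}(1+t^2)$. This yields
\[
\int_{-\pi/2}^{\pi/2}\frac{d\phi}{(1-c\cos\phi)^{a/2}}
\;\le\; 2^{a/2}\,\delta^{-a}\cdot \frac{\pi\delta}{\sqrt 2}\,\int_{-\infty}^{\infty}\frac{dt}{(1+t^2)^{a/2}}
\;=\; C_a\,\delta^{1-a},
\]
where the final constant $C_a$ is finite precisely because $a > 1$ makes $\int(1+t^2)^{-a/2}\,dt$ convergent (in the application of this lemma we have $a = 2\Sigma - 1 \ge 3$, so convergence is not an issue). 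Collecting the two cases gives the claimed bound with a constant $K$ depending only on $a$; in particular it is independent of $p$ since the final estimate involves only $c$ and $\delta$.

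The only mild obstacle is verifying the elementary inequality $1-\cos\phi \ge 2\phi^2/\pi^2$ on $[-\pi/2,\pi/2]$ (which follows from concavity of $\cos$ on that interval, compared to the chord through $(0,1)$ and $(\pi/2,0)$) and keeping track of the sign of $c$; neither step is difficult, and together they complete the proof.
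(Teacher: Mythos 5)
Your proof is correct (for $a>1$) and takes a genuinely different route from the paper. The paper substitutes $\sin(2\theta)=1-t$ to reduce to $\int_0^1\frac{\d t}{\sqrt{1-(1-t)h}^{\,a}\sqrt t}$ and then rescales with $t=\tfrac{1-h}{h}u$; you instead shift $\phi=2\theta-\tfrac{\pi}{2}$ so the denominator becomes $\sqrt{1-c\cos\phi}^{\,a}$, bracket the singularity with the quadratic lower bound $\tfrac{\delta^2}{2}+\tfrac{\phi^2}{\pi^2}$, and rescale $\phi\propto\delta$. Your method more transparently exposes the scaling $\delta^{1-a}$ and the role of $a>1$ (the convergence of $\int_{-\infty}^{\infty}(1+t^2)^{-a/2}\,\d t$). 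Your observation about needing $a>1$ is worth keeping: for $a=1$ the left-hand side grows like $\log(1/\|\vect{x}-\vect{y}\|)$ while the right-hand side is constant, so the stated range $a\ge 1$ should really be $a>1$ (in the application $a=2\Sigma-1\ge 7$, so this is harmless).

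Two small repairs. First, the justification of $1-\cos\phi\ge 2\phi^2/\pi^2$ is off: concavity of $\cos$ on $[0,\pi/2]$ puts $\cos\phi$ \emph{above} the chord $1-2\phi/\pi$, giving $1-\cos\phi\le 2\phi/\pi$, which is the wrong direction. The correct elementary argument is via $\sin$: by concavity of $\sin$ on $[0,\pi/2]$ (Jordan's inequality) one has $\sin t\ge 2t/\pi$, hence $1-\cos\phi=2\sin^2(\phi/2)\ge 2(\phi/\pi)^2$. Second, in the case $c\le\tfrac12$ the steps "$1-c\cos\phi\ge 1-c$" and "$\delta\le\sqrt 2$" implicitly assume $c\ge 0$; for $c<0$ one has $\delta>\sqrt 2$ and the first inequality reverses. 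The case is still trivial because $c\le\tfrac12$ always gives $1-c\cos\phi\ge\tfrac12$ and $1\le\delta\le 2$, which yields the bound with a slightly larger constant. The parenthetical about flipping $\vect{y}\mapsto-\vect{y}$ actually points the wrong way ($\|\vect{x}+\vect{y}\|<\|\vect{x}-\vect{y}\|$ when $c<0$) and is best deleted; the direct bound above is simpler.
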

	
	Applying this lemma to \cref{eq5.5}, we obtain
	\[
	J_\mathrm{outer} \preceq
	\int_{ \Var{P}^{\times 2} } \frac{\sqrt{1 - \langle \tensor{U},\tensor{V}\rangle^2}}{\|\tensor{U}- \tensor{V}\|^{\Sigma-1}}\, \d{}\tuple{u}\,\d{}\tuple{v}
	= \int_{ \Var{P}^{\times 2} } \frac{\sqrt{1-\langle \tensor{U},\tensor{V}\rangle} \sqrt{1 + \langle \tensor{U},\tensor{V}\rangle}}{\|\tensor{U} - \tensor{V}\|^{\Sigma-1}}\, \d{}\tuple{u}\,\d{}\tuple{v}.
	\]
	Writing $\|\tensor{U} - \tensor{V}\| = \sqrt{2}\sqrt{1 - \langle \tensor{U},\tensor{V}\rangle}$, we arrive at
	\[
	J_\mathrm{outer} \preceq
	\int_{ \Var{P}^{\times 2} } \frac{\sqrt{1 + \langle \tensor{U},\tensor{V}\rangle} }{ \sqrt{1 - \langle \tensor{U}, \tensor{V} \rangle}^{\Sigma-2}}\, \d{}\tuple{u}\,\d{}\tuple{v}
	\preceq \int_{\Var{P}^{\times 2}} \frac{1}{\sqrt{1 - \langle \tensor{U},\tensor{V}\rangle}^{\,\Sigma-2}}\, \d{}\tuple{u}\, \d{}\tuple{v}.
	\]
	By orthogonal invariance, we may fix $\vect{u}^k\in \Sp(\R^{n_k})$ to be
	 $\vect{u}^k = (1,0,\ldots,0)$, and integrate the constant function $1$ over one copy of $\Sp{}(\R^{n_1}) \times \cdots \times \Sp{}(\R^{n_d})$. Ignoring the product of volumes $\prod_{k=1}^d\mathrm{vol}(\Sp{}(\R^{n_k}))$ we have
	\[
	J_\mathrm{outer} \preceq \int_{\Sp{}(\R^{n_1}) \times \cdots \times \Sp{}(\R^{n_d})} \frac{1}{\sqrt{1 - (\vect{v}^1)_1\cdots (\vect{v}^d)_1}^{\,\Sigma-2}}\, \d{}\tuple{v}.
	\]
	Now, this spherical integral is particularly simple because the integrand depends uniquely on one of the components of each vector. One can thus transform each integral in a sphere into an integral in an interval (see for example \cite[Lemma 1]{BMOC}) getting:
			\[
			J_\mathrm{outer}\preceq \int_{t_1,\ldots,t_d\in[-1,1]}\frac{(1-t_1^2)^{\frac{n_1-1}{2}-1}\cdots(1-t_d^2)^{\frac{n_d-1}{2}-1}}{\sqrt{1-t_1\cdots t_d}^{\Sigma-2}}\,\d{}t_1\cdots\d{}t_d.
			\]
	For this last integral we consider the partition of the cube $[-1,1]^d$ into $2^d$ pieces corresponding to the different signs of the coordinates. In the pieces where the number of negative coordinates is odd, the denominator of the integrand is bounded below by $1$ and thus the whole integrand is also bounded above by $1$. Hence it suffices to check that the integral in the rest of the pieces is bounded. Assume now that $t_{i_1},\ldots,t_{i_k}$ with $k\geq 2$ even are the negative coordinates in some particular piece of the partition. The mapping that leaves all coordinates fixed but maps $t_{i_{k-1}} \mapsto -t_{i_{k-1}}$ and $t_{i_{k}} \mapsto -t_{i_{k}}$ preserves the integrand and moves the domain to another piece of the partition with $k-2$ negative coordinates. This process can then be repeated until none of the coordinates is negative. All in one, we have
	\[
				J_\mathrm{outer}\preceq\int_{t_1,\ldots,t_d\in[0,1]}\frac{(1-t_1^2)^{\frac{n_1-1}{2}-1}\cdots(1-t_d^2)^{\frac{n_d-1}{2}-1}}{\sqrt{1-t_1\cdots t_d}^{\Sigma-2}}\,\d{}t_1\cdots\d{}t_d.
	\]
				The change of variables $t_k=\cos(\theta_k)$ for $1\leq k\leq d$ converts this last integral into
	\begin{equation}\label{eq5.7}
				\int_{\theta_1,\ldots,\theta_d\in[0,\frac{\pi}{2}]}\frac{\sin(\theta_1)^{n_1-2}\cdots\sin(\theta_d)^{n_d-2}}{\sqrt{1-\cos(\theta_1)\cdots \cos(\theta_d)}^{\Sigma-2}}\,\d{}\theta_1 \cdots\d{}\theta_d.
	\end{equation}
	The next lemma is proved in \cref{proof_lem:cosenos}.
	\begin{lemma}\label{lem:cosenos}
	Let $d\geq1$ and $\theta_1,\ldots,\theta_d\in[0,\tfrac{\pi}{2}]$. Then,
	\(
				\cos(\theta_1)\cdots\cos(\theta_d)\leq 1-\frac{\theta_1^2+\cdots+\theta_d^2}{7d}.
	\)
	\end{lemma}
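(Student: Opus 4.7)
The inequality is tightest when one $\theta_k$ dominates the others, so my plan is to reduce to a single-variable inequality and then use the elementary AM--max relation. Specifically, since every $\theta_j\in[0,\tfrac{\pi}{2}]$, we have $\cos(\theta_j)\in[0,1]$, so letting $\theta_{\max}:=\max_j\theta_j$ we obtain the trivial bound
\[
\prod_{j=1}^d\cos(\theta_j)\;\le\;\cos(\theta_{\max}).
\]
Because $\theta_{\max}^2\ge\theta_j^2$ for all $j$, we also have $\theta_{\max}^2\ge\frac{1}{d}\sum_{j=1}^d\theta_j^2$. Hence it suffices to prove the one-variable inequality
\[
\cos(\theta)\;\le\;1-\frac{\theta^2}{7}\qquad\text{for all }\theta\in[0,\tfrac{\pi}{2}],
\]
because applying it to $\theta_{\max}$ and chaining with the two displays above yields
\[
\prod_{j=1}^d\cos(\theta_j)\le\cos(\theta_{\max})\le 1-\frac{\theta_{\max}^2}{7}\le 1-\frac{\theta_1^2+\cdots+\theta_d^2}{7d},
\]
which is exactly the claim.

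To prove the one-variable inequality, set $f(\theta):=1-\theta^2/7-\cos(\theta)$. Then $f(0)=0$, and
\(
f'(\theta)=\sin(\theta)-\tfrac{2}{7}\theta.
\)
By Jordan's inequality, $\sin(\theta)\ge\tfrac{2}{\pi}\theta$ on $[0,\tfrac{\pi}{2}]$; since $\tfrac{2}{\pi}>\tfrac{2}{7}$, this gives $f'(\theta)\ge(\tfrac{2}{\pi}-\tfrac{2}{7})\theta\ge 0$, with strict inequality for $\theta>0$. Therefore $f$ is non-decreasing on $[0,\tfrac{\pi}{2}]$, so $f\ge f(0)=0$, as required.

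There is no real obstacle in this argument: the choice of the constant $7$ is merely convenient, as any constant $c\ge\pi$ would work in place of it. The proof is a two-line reduction (product $\le$ minimum factor $\le$ cosine of the maximum angle, together with $\theta_{\max}^2\ge d^{-1}\sum\theta_j^2$) plus a calculus check of a single-variable inequality via Jordan's inequality.
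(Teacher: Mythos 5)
Your proof is correct, and it takes a genuinely different and cleaner route than the paper's. The paper proves the one-variable inequality $\cos\theta \le 1-\theta^2/4$ (by showing $\sin\theta - \theta/2 \ge 0$ on $[0,\pi/2]$) and then distributes it over $d$ variables by induction on $d$, where the inductive step requires a small algebraic check that $(1-\tfrac{\theta_1^2+\cdots+\theta_{d-1}^2}{7(d-1)})(1-\tfrac{\theta_d^2}{4})$ is still bounded by $1-\tfrac{\theta_1^2+\cdots+\theta_d^2}{7d}$. You instead observe that the product of cosines is at most the smallest factor $\cos(\theta_{\max})$, and that $\theta_{\max}^2 \ge \tfrac{1}{d}\sum_j \theta_j^2$, which reduces the whole statement at once to a single one-variable inequality $\cos\theta \le 1-\theta^2/7$; this you then dispatch via Jordan's inequality. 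Your reduction eliminates the induction entirely and only needs the weaker one-variable bound, at the cost of not improving the constant $7$ (which is in any case harmless for the application). One small refinement: your Jordan-based argument requires $c\ge\pi$ in $\cos\theta\le 1-\theta^2/c$, but the sharp constant on $[0,\pi/2]$ is actually $c=\pi^2/4\approx 2.47$, attained at $\theta=\pi/2$; this does not affect correctness, only the remark at the end of your proof.
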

	Using the lemma and the inequality $\sin(\theta)<\theta$ on $0\leq \theta\leq \tfrac{\pi}{2}$, we find that the integral in~\cref{eq5.7} is bounded by a constant times the following integral:
				\[
				\int_{\theta_1,\ldots,\theta_d\in[0,\frac{\pi}{2}]}\frac{\theta_1^{n_1-2}\cdots\theta_d^{n_d-2}}{\sqrt{\theta_1^2+\cdots+ \theta_d^2}^{\,\Sigma-2}}\,\d{}\theta_1 \cdots \d{}\theta_d.
				\]
		Changing the name of the variables to $x_1,\ldots,x_d$ and integrating over the $d$-dimensional ball of radius $\tfrac{\pi}{2}\sqrt{d}$, which contains the domain $[0,\tfrac{\pi}{2}]^d$, we get a new upper bound for the last integral, which implies
				\[
				J_\mathrm{outer}\preceq
				\int_{x_1^2+\cdots+x_d^2\leq \frac{\pi^2d}{4}}\frac{x_1^{n_1-2}\cdots x_d^{n_d-2}}{\sqrt{x_1^2+\cdots+ x_d^2}^{\, \Sigma-2}}\,\d{}x_1 \cdots \d{}x_d.
				\]
	Recall that $\Sigma = 1 + \sum_{j=1}^d (n_j-1)$. By passing to polar coordinates we get
		\begin{align*}
			J_\mathrm{outer}&\preceq\int_{x\in\Sp(\R^d)}x_1^{n_1-2}\cdots x_d^{n_d-2}\int_0^{\frac{\pi\sqrt{d}}{2}}\frac{\rho^{d-1+\sum_{j=1}^d(n_j-2)}}{\rho^{-1 + \sum_{j=1}^d (n_j-1)}}\,\d{}\rho\,\d{}x_1\cdots \d{}x_d\\
			&\leq \mathrm{vol}(\Sp(\R^d)) \int_{0}^{\frac{\pi \sqrt{d}}{2}} 1
		   = \mathrm{vol}(\Sp(\R^d)) \frac{\pi\sqrt{d}}{2}  < \infty.
		\end{align*}
	This shows $J_\mathrm{outer} < \infty$ implying $\mean \Kang(\tensor{A}) <\infty$, finishing the proof of \cref{thm_main2}. \qed

	\section{Other random tensors: proof of \cref{cor:RIT}}\label{sec:otherrandom}
	{We demonstrate how our main results can be extended to many other distributions as well.
	
	Consider the first item of \cref{cor:RIT}. We assume that $\tensor{A}\in \sigma_{r;n_1,\ldots,n_d}$ has the density $\hat{\rho}$ and that there exists positive constants $c_1,c_2$ such that $c_1 \le \frac{\hat\rho}{\rho} \le c_2$, where $\rho$ is the density of a GIT. Then, for any measurable function $f(\tensor{A})$ we have
	{\small{
	$$\mean_{\tensor{A}\sim \hat{\rho}}\, \ f(\tensor{A}) = \int_{\sigma_{r;n_1,\ldots,n_d}}\, f(\tensor{A}) \, \hat{\rho}(\tensor{A})\, \mathrm{d}\tensor{A} \geq c_1 \int_{\sigma_{r;n_1,\ldots,n_d}}\, f(\tensor{A}) \, {\rho}(\tensor{A})\, \mathrm{d}\tensor{A} = c_1 \mean_{\tensor{A}\sim {\rho}}\, \ f(\tensor{A})$$
	}}
	and
	{\small{
	$$\mean_{\tensor{A}\sim \hat{\rho}}\, \ f(\tensor{A}) = \int_{\sigma_{r;n_1,\ldots,n_d}}\, f(\tensor{A}) \, \hat{\rho}(\tensor{A})\, \mathrm{d}\tensor{A} \leq c_2\int_{\sigma_{r;n_1,\ldots,n_d}}\, f(\tensor{A}) \, {\rho}(\tensor{A})\, \mathrm{d}\tensor{A} = c_2 \mean_{\tensor{A}\sim {\rho}}\, \ f(\tensor{A})$$
	}}
	Thus, $\mean_{\tensor{A}\sim \hat{\rho}}\, \ f(\tensor{A}) = \infty$ if and only if $\mean_{\tensor{A}\sim \rho}\, \ f(\tensor{A}) = \infty$. Replacing $f$ by $\kappa$ and $\Kang$ proves the first part of \cref{cor:RIT}.
	%
	
	By \cite[Proposition 4.4]{BV2017} $\kappa$ is invariant under multiplication of $\tensor{A}$ by a scalar. Therefore, the expected value of $\kappa$ for the Gaussian is equal to the expected value when $\tensor{A}$ is chosen uniformly in the unit ball, and also when $\tensor{A}$ is chosen uniformly in the unit sphere of the space of tensors. Namely, we have (see, e.g., \cite[Section 2.2.4]{BC2013})
	\[
	\mean_{\tensor A\in\sigma_{r;n_1,\ldots,n_d}:\|\tensor{A}\|\leq 1} \kappa(\tensor{A})
	=\mean_{\tensor A\in\sigma_{r;n_1,\ldots,n_d}:\|\tensor{A}\|= 1} \kappa(\tensor{A})
	=\mean_{\tensor A\text{ a GIT in }\sigma_{r;n_1,\ldots,n_d}}\kappa(\tensor{A}).
	\]
	This proves the second and third item of \cref{cor:RIT} for $\kappa$.
	
	For $\Kang$ we need the following lemma.
	\begin{lemma} \label{lem_kappa_ang_scal}
	If $\tensor{A}\in\sigma_{r;n_1,\ldots,n_d}$ is an $r$-nice tensor, then $\Kang(t\tensor{A})=\Kang(\tensor{A})/t$ for all $t>0$.
	\end{lemma}
	\begin{proof}
	Since $\tensor{A}$ is $r$-nice, we have $\Kang(\tensor{A}) = \Vert \deriv{(p^{\times r} \circ\Phi^{-1}_\tuple{a})}{\tensor{A}}\Vert$.
	Similar as for \cref{eq_derivative_kang} we can show
	$
	\Vert \deriv{(p^{\times r} \circ\Phi^{-1}_\tuple{a})}{\tensor{A}}(\dot{\tensor{A}})\Vert = \sqrt{\sum_{i=1}^r\, \Vert \tensor{A}_i\Vert^{-2}\,\Vert \mathrm{d}_{\tensor{A}_i}p\,\dot{\tensor{A}}_i \Vert^2},
	$
	where $\tensor{A}=\tensor{A}_1+\cdots+\tensor{A}_r$ is the CPD of $\tensor{A}$ and $\dot{\tensor{A}}=\dot{\tensor{A}}_1+\cdots+\dot{\tensor{A}}_r$ is the corresponding decomposition the tangent vector. The derivative $\mathrm{d}_{\tensor{A}_i}p$ is the orthogonal projection onto $\tensor{A}_i^\perp$ and independent of scaling. Moreover, $\sigma_{r;n_1,\ldots,n_d}$ is a cone and so $\Tang{\tensor{A}}\sigma_{r;n_1,\ldots,n_d}$ can be identified with $\Tang{t\tensor{A}}\sigma_{r;n_1,\ldots,n_d}$. This shows that after scaling the tensor $\tensor{A}$ we get $\Vert \deriv{(p^{\times r} \circ\Phi^{-1}_\tuple{a})}{t\tensor{A}}(\dot{\tensor{A}})\Vert = t^{-1} \Vert \deriv{(p^{\times r} \circ\Phi^{-1}_\tuple{a})}{\tensor{A}}(\dot{\tensor{A}})\Vert $ and hence $\Kang(t\tensor{A})=\Kang(\tensor{A})/t$.
	\end{proof}
	Now, we can prove the rest of \cref{cor:RIT}.
	Recall from \cref{def_gaussian_tensor} that the density of a GIT on  $\sigma_{r;n_1,\ldots,n_d}$ is $
	\rho(\tensor{A}):= (C_{r;n_1,\ldots,n_d})^{-1} \, e^{-\frac{\|\tensor{A}\|^2}{2}}$, where $C_{r;n_1,\ldots,n_d} = \int_{\sigma_{r;n_1,\ldots,n_d}}e^{-\frac{\|\tensor{A}\|^2}{2}}\, \d{}\tensor{A}$. Since our results for $\Kang$ are for rank-$2$ tensors, we put $r=2$ in the following. We also abbreviate $\sigma_2:=\sigma_{2;n_1,\ldots,n_d}$ and $C_2:=C_{2;n_1,\ldots,n_d}$. Then, using \cref{lem_kappa_ang_scal} we can integrate in polar coordinates to obtain
	\begin{align*}
	\mean_{\tensor A\text{ a GIT in }\sigma_{2}}\Kang(\tensor{A})
	=& \frac{1}{C_2}\int_{\tensor A\in \sigma_2}\Kang(\tensor A)e^{-\frac{\|\tensor A\|^2}{2}}  \d\tensor A
	\\=& \frac{1}{C_2}\int_0^\infty e^{-\frac{t^2}{2}}\int_{\tensor A\in \sigma_2:\|\tensor A\|=t}\Kang(\tensor A)  \d\tensor A\,\d t\\
	=& \frac{1}{C_2}\int_0^\infty e^{-\frac{t^2}{2}}\int_{\tensor A\in\mathbb S( \sigma_2)}t^{2\Sigma-1}\Kang(t\tensor A)  \d\tensor A\,\d t
	\\=&\frac{1}{C_r}\int_0^\infty t^{2\Sigma-2}e^{-\frac{t^2}{2}}\d t\int_{\tensor A\in\mathbb S( \sigma_2)}\Kang(\tensor A)  \d\tensor A.
	\end{align*}
	It follows immediately that the last integral is finite, proving that a randomly chosen $\tensor A\in\mathbb S(\sigma_2)$ has finite expected $\Kang$. Finally, if $\tensor A$ is chosen randomly in the unit ball in $\sigma_2$, the same argument shows that the expected value is again finite:
	\[
	\int_{\substack{\tensor A\in \sigma_{r;n_1,\ldots,n_d}:}{\|\tensor A\|\leq 1}}\Kang(\tensor A)  \d\tensor A
	=\int_0^1t^{2\Sigma-2}\int_{\tensor A\in\mathbb S( \sigma_2)}\Kang(\tensor A) \d\tensor A\,\d t<\infty.
	\]
	This finishes the proof of \cref{cor:RIT}.}

	\section{Numerical experiments}\label{sec:experiments}
	
	Having proved that the expected value of the condition number is infinite in most cases, we provide further computational evidence in support of \cref{conj_regular_cn}. To this end, a natural idea is to perform Monte Carlo experiments in a few of the unknown cases as in \cite{BV2018c}.
	
	Sampling GITs is hard in practice, as the defining polynomial equalities and inequalities of the semialgebraic set $\sigma_r = \sigma_{r;n_1,\ldots,n_d}$ of tensors of rank bounded by $r$ are not known in the literature.\footnote{See \cite[Chapter 7]{Landsberg2012} and the references therein for some results on equations of the algebraic closure of $\sigma_r$.} Nevertheless, there are a few cases that we can treat numerically. If $r = \frac{\Pi}{\Sigma}$ and the algebraic closure $\overline{\sigma_r}(\C)$ has $\dim \overline{\sigma_r}(\C) = \Pi$, a so-called \textit{perfect tensor space}, then $\sigma_r$ is an open subset of the ambient $\R^{n_1 \times \cdots \times n_d}$; see, e.g., \cite{Landsberg2012,BT2015}.
	
	{From \cref{remark_conditional_distr}, we can sample from the density $\rho$ on $\sigma_r$ via an acceptance--rejection method:} Randomly sample tensors $\tensor{A}$ from the density $e^{-\frac{\|\tensor{A}\|^2}{2}}$ on $\R^{n_1 \times \cdots \times n_d}$ until we find one that belongs to $\sigma_r$. While this scheme will yield tensors distributed according to the density $\rho$ on $\sigma_r$, it does not yield Gaussian \emph{identifiable} tensors in general. The reason is that most perfect tensor spaces are not (expected to be) generically $r$-identifiable \cite{HOOS2016}. Fortunately, there are a few known exceptions: matrix pencils ($\R^{n \times n \times 2}$ for all $n\ge2$), $\R^{5 \times 4 \times 3}$ and $\R^{3 \times 2 \times 2 \times 2}$ are proved to be generically complex $r$-identifiable for $r = \frac{\Pi}{\Sigma}$. By applying the acceptance--rejection method to these spaces, every sampled tensor is a GIT with probability $1$.

	For numerically checking if a random tensor $\tensor{A} \in \R^{n_1 \times \cdots \times n_d}$ in a perfect tensor space lies in~$\sigma_r$ with $r = \frac{\Pi}{\Sigma}$, we apply a homotopy continuation method to the square system of $\Pi$ equations
	\[
	 \tensor{A} - \sum_{i=1}^r \sten{a}{i}{1} \otimes \begin{bmatrix}1 \\ \sten{a}{i}{2} \end{bmatrix} \otimes \cdots \otimes \begin{bmatrix} 1 \\ \sten{a}{i}{d} \end{bmatrix} = 0,
	\]
	where the $\Pi = r \Sigma$ entries of the $\sten{a}{i}{k}$'s are treated as variables, and the $n_1 \times \cdots \times n_d$ tensor $\tensor{A}$ is the tensor to decompose. We generate a start system with one solution to track by randomly sampling the entries of the vectors $\sten{a}{i}{k}$ i.i.d.~from a real standard Gaussian distribution and then constructing the corresponding tensor $\tensor{A}_0$.
	Since $r = \frac{\Pi}{\Sigma}$ is the so-called \emph{generic rank} of tensors in perfect tensor spaces $\C^{n_1 \times \cdots \times n_d}$, the above system has at least one complex solution with probability $1$ as well. If we consider complex $r$-identifiable perfect tensor spaces at the generic rank, we can thus determine if $\tensor{A} \in \sigma_r$ by solving the square system and checking whether the unique solution is real. Assuming that we use a certified homotopy method such as alphaCertified \cite{HS2012}, this approach will correctly classify $\tensor{A}$ with probability $1$, thus not impacting the overall distribution produced by the acceptance--rejection scheme.
	
	We implemented the above scheme in Julia 1.0.3 using version 0.4.3 of the package HomotopyContinuation.jl \cite{BT2017}, employing the \texttt{solve} function with default parameter settings. We deem a solution real if the norm of the imaginary part is less than $10^{-8}$. Note that this package does not offer certified tracking; however, the failure rate observed in our experiments was very low, namely $0.0512498\%$---see \cref{tab_homo_sols}. For this reason, we are convinced that the distribution produced by the acceptance--rejection scheme is very close to the true distribution.
	
	We performed the following experiment for estimating the distribution of the condition numbers of GITs of generically complex $r$-identifiable tensors in perfect tensor spaces with $r = \frac{\Pi}{\Sigma}$, the complex generic rank. As explained above, we randomly sampled an element $\tensor{A}$ of $\R^{n_1 \times \cdots \times n_d}$ from the density $e^{-\frac{\|\tensor{A}\|^2}{2}}$ by choosing its entries i.i.d.~standard normally distributed. Then, we generated one random starting starting system and applied the \texttt{solve} function from HomotopyContinuation.jl for tracking the starting solution $\tensor{A}_0$ to the target $\tensor{A}$. If the final solution of the square system was real, we recorded both the regular and angular condition numbers at the CPD of $\tensor{A}$ computed via homotopy continuation. These computations were performed in parallel using $20$ computational threads until $100,000$ finite, nonsingular, real solutions and corresponding condition numbers were obtained. This experiment was performed on a computer system consisting of $2$ Intel Xeon E5-2697 v3 CPUs with $12$ cores clocked at 2.6GHz and 128GB main memory. Information about the sampling process via the acceptance--rejection method are summarized in \cref{tab_homo_sols}, and \cref{fig_distribution} visualizes the complementary cumulative distribution functions of the regular and angular condition numbers.
	
	\begin{table}
	\begin{tabular}{ccccccc}
	 \toprule
	\multirow{2}{*}{$n_1 \times n_2 \times n_3$} & \multirow{2}{*}{$r$} & \multicolumn{3}{c}{samples} & \multirow{2}{*}{fraction in $\R$} & \multirow{2}{2.5em}{time\\ (min)} \\
	\cmidrule{3-5}
	&& $\R$ & $\C$ & failed &  \\
	 \midrule
	 $2 \times 2 \times 2$ & $2$ & $100,000$ & $27,335$ & $41$   & $\underline{0.785}3\ldots \approx \frac{\pi}{4}$ & $1.3$ \\[2pt]
	 $3 \times 3 \times 2$ & $3$ & $100,000$ & $101,345$ & $185$ & $\underline{0.49}66\ldots \approx \frac{1}{2}$ & $2.8$\\[2pt]
	 $4 \times 4 \times 2$ & $4$ & $100,000$ & $288,770$ & $325$ & $\underline{0.25}72\ldots \approx \frac{27 \pi^2}{1024}$ & $14.9$\\[2pt]
	 $5 \times 4 \times 3$ & $6$ & $100,000$ & $1,237,912$ & $643$ & $0.0747\ldots$ & $420.6$ \\[2pt]
	 $5 \times 5 \times 2$ & $5$ & $100,000$ & $810,254$ & $509$ & $\underline{0.10}98\ldots \approx \frac{1}{9}$ & $99.3$ \\[2pt]
	 \bottomrule
	\end{tabular}
	\caption{Results of sampling GITs in $\sigma_{r;n_1,n_2,n_3} \subset \R^{n_1 \times n_2 \times n_3}$ via an acceptance--rejection method. Columns three to five list the number of samples where the final tracked solution of the homotopy was real, complex, or failed, respectively. The next column shows the fraction of successful samples that were real; in the case of $n \times n \times 2$ the analytical solution from \cite{BF2011} is also stated and the correct digits from the empirical estimate are underlined. The final column indicates the total wall-clock time required to perform the Monte Carlo experiments.}
	 \label{tab_homo_sols}
	\end{table}

	{In \Cref{tab_homo_sols}, the total fractions of solutions} that are real when sampling random Gaussian tensors (with i.i.d.~standard normally distributed entries) seem to agree very well with the known theoretical results by Bergqvist and Forrester \cite{BF2011}; they showed that for random Gaussian $n \times n \times 2$ tensors the rank is $n = \frac{\Pi}{\Sigma}$ with probability $p_n := \Gamma\bigl(\frac{n+1}{2}\bigr)^n \bigl( G(n+1) \bigr)^{-1}$, where $\Gamma$ is the gamma function and $G$ the Barnes $G$-function (or double gamma function). The correct digits in the numerical approximation are underlined in the penultimate column of \cref{tab_homo_sols}.
	
	\begin{figure}
	 \centerline{\subfloat{\includegraphics[height=6.4cm]{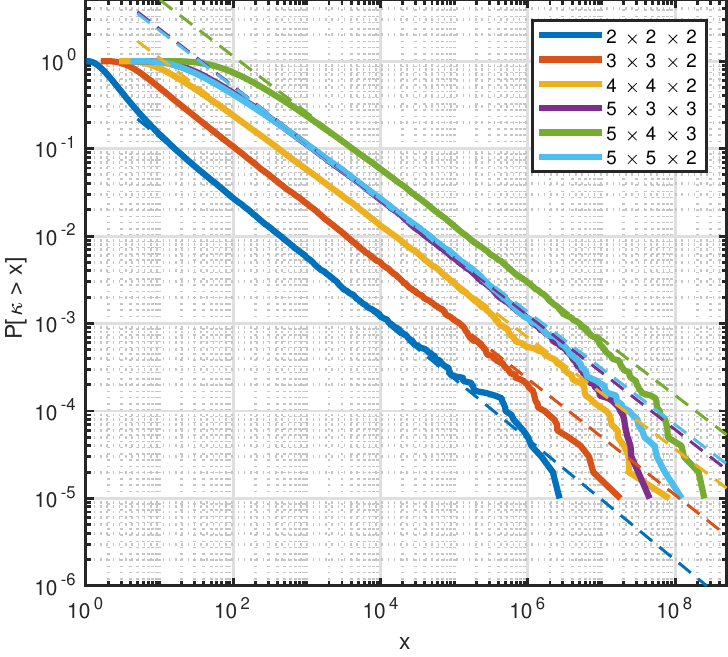}} \hspace{.4cm}
	 \subfloat{\includegraphics[height=6.4cm]{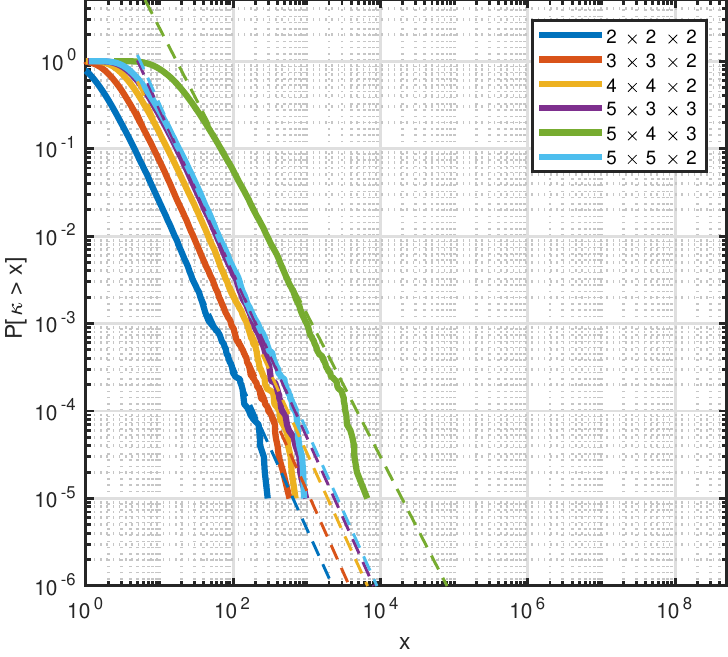}}}
	 \caption{Empirical complementary cumulative distribution function of the regular and angular condition numbers for the tensor spaces from \cref{tab_homo_sols}. Both plots are on the same scale.}
	 \label{fig_distribution}
	\end{figure}
	
	The empirical complementary cumulative distribution functions of the regular and angular condition numbers are shown in \cref{fig_distribution}. The full lines correspond to the empirical data and the thinner dashed lines correspond to an exponential model fitted to the data. From the figure it is namely reasonable to postulate that the complementary cumulative distribution function $c(x)$ for large $x$ has the form
	\begin{align} \label{eqn_exp_model}
	 c(x) = 1 - \int_0^x p(t) \,\d{}t = a x^{-b},
	\end{align}
	which corresponds to a straight line in the log-log plot in \cref{fig_distribution}. We fitted the parameters $a$ and $b$ of the postulated model to the data restricted to the range $10^{-1} \le c(x) \le 10^{-3}$. The reason for restricting the data set is that for small condition numbers it is visually evident in \cref{fig_distribution} that the model is incorrect and for large condition numbers the data contains few samples, which negatively impacts the robustness of the fit. The parameters were fitted using \texttt{fminsearch} from Matlab R2017b with starting point $(1, 1)$ and default settings. In all cases the algorithm terminated because the relative change of the parameters fell below $10^{-4}$. The obtained parameters are shown in \cref{tab_parameters}, along with the coefficient of determination $R^2$ between the log-transformed data and log-transformed model predictions; $1$ indicates perfect correlation.
	
	\begin{table}
	\begin{tabular}{ccccccccc}
	\toprule
	\multirow{2}{*}{$n_1 \times n_2 \times n_3$} && \multicolumn{3}{c}{regular} && \multicolumn{3}{c}{angular} \\
	\cmidrule{3-5} \cmidrule{7-9}
	&& $a$ & $b$ & $R^2$ && $a$ & $b$ & $R^2$ \\
	\midrule
	$2 \times 2 \times 2$ && $0.6624$  & $0.6904$ & $0.9999$ && $1.7288$   & $1.8624$ & $0.9995$\\
	$3 \times 3 \times 2$ && $2.2348$  & $0.6636$ & $0.9999$ && $5.5496$   & $1.8856$ & $0.9998$\\
	$4 \times 4 \times 2$ && $4.7318$  & $0.6388$ & $0.9997$ && $11.4165$  & $1.8455$ & $0.9994$\\
	$5 \times 4 \times 3$ && $22.3141$ & $0.6461$ & $0.9998$ && $102.4887$ & $1.6337$ & $0.9992$\\
	$5 \times 5 \times 2$ && $9.8634$  & $0.6436$ & $0.9997$ && $23.6951$  & $1.8662$ & $0.9996$\\
	\bottomrule
	\end{tabular}
	 \caption{Estimated parameters of the exponential model \cref{eqn_exp_model} fitted to the complementary cumulative distribution functions from \cref{fig_distribution}. The coefficient of determination $R^2$ between the log-transformed data and log-transformed model predictions is also indicated.}
	 \label{tab_parameters}
	\end{table}

	We may estimate the expected values of the regular and angular condition numbers based on their empirical distributions. If $p(x)$ denotes the probability {density} function of the regular condition number, then
	\[
	 \mathbb{E}\, \kappa(\tensor{A}) = \int_0^\infty x p(x) \,\d{}x.
	\]
	From the postulated model of $c(x)$ we find that $p(x) = ab x^{-b-1}$ for large $x$, so that we postulate that the expected value will be well approximated by
	\[
	\mathbb{E}\, \kappa(\tensor{A}) \approx \int_0^{\kappa_0} x p(x) \,\d{}x + ab \int_{\kappa_0}^\infty x^{-b} \,\d{}x
	\]
	for some finite $\kappa_0$. The expression on the right is finite only if $b > 1$. The same discussion applies to the angular condition number as well. Regarding the estimated parameters in \cref{tab_parameters}, our empirical \emph{data strongly suggests that the expected value of the condition number is infinite} for $r = \frac{\Pi}{\Sigma}$ in the tested cases, as $b \approx 0.6 < 1$. On the other hand, \emph{the expected angular condition number seems finite} in all cases, as $b \approx 1.8 > 1$. This suggests that both \cref{thm_main_for_higher_ranks} and \cref{thm_main2} might hold for higher ranks as well.

	\appendix
	
	\section{Proof of the Lemmata from \cref{sec:proof_main}}
	\label{app_main}
	
	\subsection{Proof of \cref{lem:innerBV}}\label{proof_lem1}
	Integrating in polar coordinates, we have
	\begin{align*}
		&\int_{(0,\infty)}\int_{(0,\infty)} \lambda^{\Sigma-1}\,\mu^{\Sigma-1}\,e^{-\frac{\Vert \lambda \tensor{U} + \mu \tensor{V}\Vert^2}{2}}\,\d{}\lambda\, \d{}\mu\\
		&= \int_{0}^{\frac{\pi}{2}}\int_0^\infty (\cos(\theta)\sin(\theta))^{\Sigma-1} \rho^{2\Sigma-1} \;e^{-\rho^2\frac{\Vert \cos(\theta) \tensor{U} + \sin(\theta) \tensor{V}\Vert^2}{2}}\,\d{}\rho\, \d{}\theta.
	\end{align*}
		The change of variables $t:= \rho\Vert \cos(\theta) \tensor{U} + \sin(\theta) \tensor{V}\Vert$ transforms the integral for~$\rho$ into
	\[
		\frac{1}{\Vert \cos(\theta) \tensor{U}+ \sin(\theta) \tensor{V}\Vert^{2\Sigma}}\int_0^\infty t^{2\Sigma-1} \;e^{-\frac{t^2}{2}}\,\d{}t.
		\]
	The last integral is $2^{\Sigma -1}\Gamma(\Sigma)$. Plugging this into the equation above shows the assertion.\qed

	\subsection{Proof of \cref{lem:integralbound}}\label{proof_lem2}
	{Expanding the denominator and taking a change of variables by $\varphi=2\theta$, the lemma reduces to proving that
	\[
	\int_0^{\pi/2}\frac{\cos^{s-1}\varphi}{(1\pm a\cos\varphi)^s} \d\varphi
	=\int_0^{\pi/2}\frac{\sin^{s-1}\varphi}{(1\pm a\sin\varphi)^s} \d\varphi
	\geq \frac{k(s)}{(1\pm a)^{s-1/2}},
	\]
	{(for the first equality, make the change of variables $\varphi\to\pi/2-\varphi$)}
	where we are denoting $a=|\langle \vect{x},\vect{y}\rangle|\in[0,1]$. With the $+$ sign the inequality is clear (choosing an appropriate $k(s)$). For the other case we have, up to a constant $k(s)$, the lower bound
	\[
	\int_0^{\pi/4}\frac{\d\varphi}{(1- a\cos\varphi)^s}
	\geq \int_0^{\pi/4}\frac{\d\varphi}{\left(1- a\left(1-\varphi^2/2\right)\right)^s}
	=\frac{1}{(1-a)^s}\int_0^{\pi/4} \frac{\d\varphi}{\left(1+\frac{a}{1-a}\frac{\varphi^2}{2}\right)^s} .
	\]
	Thus, it suffices to check that
	\[
	\int_0^{\pi/4} \frac{1}{\left(1+\frac{a}{1-a}\frac{\varphi^2}{2}\right)^s} d \varphi
	\geq k(s)\sqrt{1-a},
	\]
	but this is easily checked taking the change of variables $\varphi^2=(1-a)t$.}
	\qed

	\subsection{Proof of \cref{sec3:auxiliary_lemma}}\label{proof_lem3}
	Recall from the definition of $D(\epsilon)$ that $\Vert \vect{u}^1-\vect{v}^1\Vert < \epsilon$, and that $\frac{9}{10} \Vert \vect{u}^1 - \vect{v}^1\Vert <  \Vert\vect{u}^k - \vect{v}^k\Vert<\Vert\vect{u}^1 - \vect{v}^1\Vert$ for $2\leq k\leq d$. If $\epsilon > 0$ is sufficiently small we can assume
	\begin{equation}\label{delta_ineq}
	 \delta_k := \langle \vect{u}^k, \vect{v}^k \rangle \ge \frac{9}{10}, \quad 1 \le k \le d.
	\end{equation}
	To prove the lemma, it suffices to show that
	\[
	 q(U)^2 \ge {2^{-d}}\left( \frac{\| \vect{u}^1 - \vect{v}^1 \|^2}{4} \right)^{\Sigma - 1} = {2^{-d}} \left( \frac{1 - \delta_1}{2} \right)^{\Sigma - 1}
	\]
	for sufficiently small $\epsilon$.
	
	For convenience, we will first introduce a few auxiliary variables. Consider the next picture:
	\[
	\begin{tikzpicture}[scale=.6]
	 \draw (9,0) -- (7,0);
	 \draw (0,0) -- (8,2) -- (9,2.25);
	 \draw[->,>=stealth] (0,0) -- (8,0) node[below] {$\vect{v}^k$};
	 \draw[dashed] (8,0)  -- (8,2);
	 \draw[->,>=stealth] (0,0) -- (7.77,1.95);
	 \draw (7.5,2.1) node[left] {$\vect{u}^k$};
	 \draw (8,0) -- (7.77,1.95);
	 \draw[color=blue] (7.52,1.89) -- (8,0) -- (8,2) -- (7.52,1.89) -- (8,0);
	 \draw[color=blue] (8,1) node[right] {$\epsilon_k$};
	 \draw[color=blue] (7.8,1) node[left] {$\zeta_k$};
	 \draw (1,0) arc (0:15:1);
	 \draw (1.5,.22) node[right] {$\theta$};
	 \draw (8.25,0.1) |- (8.1,0.25);
	 \draw (7.48,1.6) -- (7.34,1.565) -- (7.30,1.699);
	\end{tikzpicture}
	\]
	Then, for small $\theta > 0$, we have the following elementary {trigonometric} relations:
	\begin{align} \nonumber
	\delta_k &:= \cos \theta = \langle \vect{u}^k,\vect{v}^k\rangle = \sqrt{\tfrac{1}{\epsilon_k^2+1}}, \\ \label{eqn_proof_trigonio}
	\zeta_k &:= \sin \theta = \sqrt{1 - \delta_k^2} = \delta_k \epsilon_k, \text{ and } \\ \nonumber
	\epsilon_k &:= \tan \theta = \sqrt{\tfrac{1}{\delta_k^2}-1}.
	\end{align}
	It follows from the definition of $D(\epsilon)$ that
	\begin{equation*} 
	\delta_1 = \min \, \{\delta_1,\ldots,\delta_d\} \quad \text{ and } \quad \epsilon_1 = \max \, \{\epsilon_1,\ldots,\epsilon_d\}.
	\end{equation*}
	From the previous figure it is also clear that
	\begin{align}\label{eqn_proof_epsilonbound}
	 \zeta_k = \delta_k \epsilon_k \le \| \vect{u}^k - \vect{v}^k \| \le \epsilon_k, \;\text{ and so }\;
	 \left(\frac{9}{10}\right)^2 \epsilon_1 \le \frac{9}{10} \|\vect{u}^1 - \vect{v}^1\| \le \|\vect{u}^k - \vect{v}^k\| \le \epsilon_k,
	\end{align}
	having used in the right sequence of inequalities that \cref{delta_ineq} implies $\frac{9}{10} \delta_1 \epsilon_1 \ge \left( \frac{9}{10} \right)^2 \epsilon_1$. Then,
	\begin{equation} \label{deltas}
	1-\frac{\epsilon_1^2}{2}\leq1-\frac{\epsilon_k^2}{2}\leq\delta_k=\sqrt{\tfrac{1}{\epsilon_k^2+1}}\leq 1-\frac{\epsilon_k^2}{4}\leq 1-\frac{1}{4}\left(\frac{9}{10}\right)^4\epsilon_1^2.
	\end{equation}
	Finally, we will also use
	\begin{equation}\label{def_z}
	z := \delta_1\cdots \delta_d.
	\end{equation}
	
	\subsubsection*{Transforming $q(U)$}
	For computing $q(U)$, we will first make a convenient orthogonal transformation of $U$'s columns.
	Recall from \cref{eqn_nice_expr_terracini} that $U=\begin{bmatrix} \tensor{U} & L_1 & \tensor{V}& L_2 \end{bmatrix}$. As in \cref{def_M} we write $M=\begin{bmatrix} \tensor{U}&\tensor{V}\end{bmatrix}$. Then, $q(U) = q(\begin{bmatrix} M & L_1 & L_2 \end{bmatrix})$. The block structure of~$L_1,L_2\in\R^{\Pi \times (\Sigma -1)}$ was given in \cref{def_L_i}: $L_1$ is made up of the $d$ blocks
	\begin{align*}
	L_1^k = \sten{u}{}{1} \otimes \cdots \otimes \sten{u}{}{k-1} \otimes \dot{U}^k \otimes \sten{u}{}{k+1} \otimes \cdots \otimes \sten{u}{}{d}, \quad 1\leq k\leq d,
	\end{align*}
	and $L_2$ is analogously made up of the $d$ blocks
	\begin{align*}
	L_2^k = \sten{v}{}{1} \otimes \cdots \otimes \sten{v}{}{k-1} \otimes \dot{V}^k \otimes \sten{v}{}{k+1} \otimes \cdots \otimes \sten{v}{}{d}, \quad 1\leq k\leq d,
	\end{align*}
	where
	$
	\dot{U}^k =
	\begin{bmatrix}
	\sten{\dot{u}}{2}{k} & \sten{\dot{u}}{3}{k} & \cdots & \sten{\dot{u}}{n_k}{k}
	\end{bmatrix} \in \R^{n_k \times n_k-1}$
	and
	$\dot{V}^k =
	\begin{bmatrix}
	\sten{\dot{v}}{2}{k} & \sten{\dot{v}}{3}{k} & \cdots & \sten{\dot{v}}{n_k}{k}
	\end{bmatrix} \in \R^{n_k \times n_k-1}
	$
	are matrices whose columns form an orthonormal basis of $(\vect{u}^k)^\perp:= \Tang{\vect{u}^k}{\Sp(\R^{n_k})}$ and $(\vect{v}^k)^\perp:= \Tang{\vect{v}^k}{\Sp(\R^{n_k})}$, respectively.
	
	The columns of $M$ are rotated into
	\begin{align} \label{eqn_rotated_UV}
	\vect{a}_\uparrow := \frac{1}{\sqrt{2}}(\tensor{U} - \tensor{V}) \text{ and } \vect{a}_\downarrow := \frac{1}{\sqrt{2}}(\tensor{U} + \tensor{V}),\end{align}
	while the columns of $L_1$ and $L_2$ are rotated as follows.
	We define the $\Pi \times (\Sigma -1)$-matrices $R_\downarrow := \begin{bmatrix} R_\downarrow^1 & \ldots & R_\downarrow^d\end{bmatrix}$ and $R_\uparrow := \begin{bmatrix} R_\uparrow^1 & \ldots & R_\uparrow^d\end{bmatrix}$, where
	\begin{align}\label{def_R_i}
	R_\downarrow^k = \frac{1}{\sqrt{2}}(L_1^k - L_2^k) \quad \text{ and } \quad R_\uparrow^k  = \frac{1}{\sqrt{2}}(L_1^k + L_2^k).
	\end{align}
	The reason for using $\downarrow$ and $\uparrow$ will become clear from the computations below: Inner products of two quantities with arrows pointing in opposite directions are zero, and swapping the directions of both arrows flips a sign in the expression of the inner product.
	
	{Now, instead of considering the matrix $U$ we work with the matrix
	\[
	N:= \begin{bmatrix} \vect{a}_\downarrow & R_\downarrow & \vect{a}_\uparrow & R_\uparrow\end{bmatrix}.
	\]
	By construction,
		$\begin{bmatrix} M & L_1 &  L_2\end{bmatrix} = N Q$ for some orthogonal matrix~$Q$, so that}
	\begin{equation*} 
	q(U) = q(\begin{bmatrix} \vect{a}_\downarrow & R_\downarrow & \vect{a}_\uparrow & R_\uparrow\end{bmatrix}) = q(N)
	\quad\text{ and }\quad
	q(U)^2 = q(N^T N).
	\end{equation*}
	Note that the choice of $\dot{U}^k$ and $\dot{V}^k$ does not affect the value of $q$. In particular, we may choose the matrices as follows. Let $H$ be the plane in $\R^{n_k}$ spanned by $\vect{u}^k$ and $\vect{v}^k$. By definition of $D(\epsilon)$, $\vect{u}^k \ne \pm \vect{v}^k$. Let $O$ be the rotation that sends $\vect{u}^k$ to $\vect{v}^k$,
	but leaves the orthogonal complement $H^\perp$ of $H$ fixed. Take $\sten{\dot{u}}{2}{k} \in H$ with $\dot{\vect{u}}_2^k \perp \vect{u}^k$ as the unit norm vector making the smallest angle with $\vect{v}^k$, as follows:
	\[
	\begin{tikzpicture}
			\draw[thick] (0, 0) -- ({cos(90)}, {sin(90)}) node[above] {$\vect{u}^k$};
			\draw[thick] (0, 0) -- ({cos(120)}, {sin(120)}) node[above] {$\vect{v}^k$};
			\draw[thick] (0, 0) -- ({cos(180)}, {sin(180)}) node[left] {$\sten{\dot{u}}{2}{k}$};
			\draw[thick] (0, 0) -- ({cos(210)}, {sin(210)}) node[left] {$\sten{\dot{v}}{2}{k}$};
			\draw [gray,domain=0:360] plot ({cos(\x)}, {sin(\x)});
	\end{tikzpicture}
	\]
	We also take $\sten{\dot{v}}{2}{k} := O \sten{\dot{u}}{2}{k}$, as in the illustration above. If $\vect{h}_3,\ldots, \vect{h}_{n_k}$ is any orthogonal basis of $H^\perp$, then our choice of bases is
	\begin{equation}\label{choice_of_tangent_vectors}
	\sten{\dot{u}}{2}{k} \text{ and } \sten{\dot{v}}{2}{k} = O \sten{\dot{u}}{2}{k} \text{ as above, and for } 3\leq j \leq n_k:
	\sten{\dot{u}}{j}{k} = \sten{\dot{v}}{j}{k} = \vect{h}_j.
	\end{equation}
	In other words, we can assume that all but the first columns of $\dot{U}^k$ and $\dot{V}^k$ are equal. Moreover, as can be seen from the foregoing figure, the following properties hold:
	\begin{align} \nonumber
	\langle \sten{\dot{u}}{2}{k}, \sten{\dot{v}}{2}{k} \rangle &= \langle Q \vect{u}^k, Q \vect{v}^k \rangle = \langle \vect{u}^k, \vect{v}^k\rangle = \delta_k,\\ \label{eqn_proof_angles}
	\langle \sten{\dot{u}}{2}{k}, \vect{v}^k \rangle &= \cos\left( \frac{\pi}{2} - \arccos( \delta_k ) \right) = \sqrt{1 - \delta_k^2} = \delta_k \epsilon_k, \\ \nonumber
	\langle \sten{\dot{v}}{2}{k}, \vect{u}^k \rangle &= \cos\left( \frac{\pi}{2} + \arccos( \delta_k ) \right) = -\sqrt{1 - \delta_k^2} = -\delta_k \epsilon_k,
	\end{align}
	where $Q$ is a rotation by $\frac{\pi}{2}$ radians in same direction as the rotation $O$. In particular, we have
	\begin{align*}
	 (\dot{U}^k)^T \vect{v}^k = \delta_k \epsilon_k \vect{e}^k  \text{ and } (\dot{V}^k)^T \vect{u}^k = -\delta_k \epsilon_k \vect{e}^k, \text{ where } \vect{e}^k := (1, 0, \ldots, 0)^T\in \R^{n_k-1}.
	\end{align*}
	Consider then the Gram matrix $G = N^T N$ for this particular choice of tangent vectors:
	\[
	G = \begin{bmatrix}
	\vect{a}_\downarrow^T\vect{a}_\downarrow & \vect{a}_\downarrow^TR_\downarrow & \vect{a}_\downarrow^T\vect{a}_\uparrow & \vect{a}_\downarrow^TR_\uparrow\\[0.2cm]
	R_\downarrow^T\vect{a}_\downarrow & R_\downarrow^TR_\downarrow & R_\downarrow^T\vect{a}_\uparrow & R_\downarrow^TR_\uparrow\\[0.2cm]
	\vect{a}_\uparrow^T\vect{a}_\downarrow & \vect{a}_\uparrow^TR_\downarrow & \vect{a}_\uparrow^T\vect{a}_\uparrow & \vect{a}_\uparrow^TR_\uparrow\\[0.2cm]
	R_\uparrow^T\vect{a}_\downarrow & R_\uparrow^TR_\downarrow & R_\uparrow^T\vect{a}_\uparrow & R_\uparrow^TR_\uparrow
	\end{bmatrix}.
	\]
	We continue by computing its entries.

	\subsubsection*{Inner products involving only $\vect{a}$}
	Using \cref{inner_prod_rank_one} for computing inner products of rank-1 tensors, we see that
	\begin{align} \label{eqn_innerprods_a_updown}
			\vect{a}_\downarrow^T\vect{a}_\downarrow = 1 + \langle \tensor{U},\tensor{V}\rangle = 1 + z, \quad
			\vect{a}_\downarrow^T\vect{a}_\uparrow = \vect{a}_\uparrow^T\vect{a}_\downarrow= 0, \text{ and}\quad
			\vect{a}_\uparrow^T\vect{a}_\uparrow = 1 - \langle \tensor{U},\tensor{V}\rangle = 1 - z.
		\end{align}
	
	\subsubsection*{Inner products involving both $\vect{a}$ and $R$}
	For each $k$ we have $(L_1^k)^T \tensor{U} = (L_2^k)^T \tensor{V} = 0$,
	\begin{align*}
	(L_1^k)^T\tensor{V} &= \big(\prod_{i\neq k} \delta_i\big)\, (\dot{U}^k)^T\vect{v}^k = z \epsilon_k \vect{e}^k\quad \text{ and }
	\quad
	(L_2^k)^T\tensor{U} = \big(\prod_{i\neq k} \delta_i\big)\, (\dot{V}^k)^T\vect{u}^k = -z\epsilon_k \vect{e}^k.
	\end{align*}
	This implies
	\begin{align*}
	(R_\downarrow^k)^T \vect{a}_\downarrow &= \frac{1}{2}(L_1^k-L_2^k)^T(\tensor{U}+\tensor{V}) = z \epsilon_k \vect{e}^k.\\
	(R_\downarrow^k)^T \vect{a}_\uparrow &= \frac{1}{2}(L_1^k-L_2^k)^T(\tensor{U}-\tensor{V}) = 0,\\
	(R_\uparrow^k)^T \vect{a}_\downarrow &= \frac{1}{2}(L_1^k+L_2^k)^T(\tensor{U}+\tensor{V}) =0,\\
	(R_\uparrow^k)^T \vect{a}_\uparrow &= \frac{1}{2}(L_1^k+L_2^k)^T(\tensor{U}-\tensor{V}) = - z \epsilon_k \vect{e}^k,
	\end{align*}
	having used \cref{eqn_proof_angles} and \cref{inner_prod_rank_one}.
	Combining the above, we obtain
	\begin{equation}\label{eqA55}
	R_\uparrow^T\vect{a}_\downarrow = R_\downarrow^T\vect{a}_\uparrow=0,\quad
	R_\downarrow^T\vect{a}_\downarrow = z\vect{f},
	\quad \text{and} \quad
	R_\uparrow^T\vect{a}_\uparrow = -z \vect{f}, \text{ where } \vect{f}=\begin{bmatrix}\epsilon_1\vect{e}^1\\\vdots\\\epsilon_d\vect{e}^{d}\end{bmatrix}.
	\end{equation}
	
	\subsubsection*{Inner products involving only $R$}
	By construction, we have $(\dot{U}^k)^T \dot{U}^k = (\dot{V}^k)^T \dot{V}^k =\Id_{n_k-1} $, where $\Id_{n_k-1}$ is the $(n_k-1)\times (n_k-1)$ identity matrix. Furthermore, by our choice of tangent vectors from \cref{choice_of_tangent_vectors} and \cref{eqn_proof_angles}, we have
	$(\dot{U}^k)^T \dot{V}^k  = \mathrm{diag}(\langle \sten{\dot{u}}{j}{k},\sten{\dot{v}}{j}{k} \rangle)_{j=2}^{n_k} = \mathrm{diag}(\delta_k, 1,\ldots, 1).$ This implies
	\[
	(L_1^k)^T L_1^k = (L_2^k)^T L_2^k = \Id_{n_k -1},  \quad (L_1^k)^T L_2^k = \big(\prod_{i\neq k}\delta_i\big) \, (\dot{U}^k)^T \dot{V}^k = z \cdot \mathrm{diag}(1, \delta_k^{-1}, \ldots, \delta_k^{-1}).
	\]
	Moreover, for $j \ne k$ we have
	\begin{align*}
	(L_1^j)^T L_1^k &= (L_2^j)^T L_2^k  = 0, \\
	(L_1^j)^T L_2^k &=
	\Bigl(\prod_{i\not\in\{k,j\}} \delta_i\Bigr) \cdot
	\begin{cases}
	\bigl((\dot{U}^j)^T \vect{v}^j \bigr) \otimes \bigl( (\vect{u}^k)^T \dot{V}^k \bigr), & j < k, \\[2pt]
	\bigl( (\vect{u}^k)^T \dot{V}^k \bigr) \otimes \bigl( (\dot{U}^j)^T \vect{v}^j \bigr), & j > k
	\end{cases}
	= -z \epsilon_j \epsilon_k \vect{e}^j (\vect{e}^k)^T.
	\end{align*}
	From this we get
	\begin{align*}
	(R_\downarrow^j)^T R_\downarrow^k &= \frac{1}{2}(L_1^j - L_2^j)^T (L_1^k - L_2^k) = \begin{cases} \Id_{n_k -1} -z \cdot \mathrm{diag}(1, \delta_k^{-1}, \ldots, \delta_k^{-1}),& j=k\\ z \epsilon_j \epsilon_k  \,\vect{e}^j (\vect{e}^k)^T ,& j\neq k\end{cases},\\
	(R_\downarrow^j)^T R_\uparrow^k &= \frac{1}{2}(L_1^j - L_2^j)^T (L_1^k + L_2^k) = 0, \\
	(R_\uparrow^j)^T R_\uparrow^k &= \frac{1}{2}(L_1^j + L_2^j)^T (L_1^k + L_2^k) = \begin{cases} \Id_{n_k -1} + z \cdot \mathrm{diag}(1, \delta_k^{-1}, \ldots, \delta_k^{-1}),& j=k\\ -z \epsilon_j \epsilon_k \, \vect{e}^j (\vect{e}^k)^T,& j\neq k\end{cases}.
	\end{align*}
	Note that
	$$
	\Id_{n_k-1} - z \cdot \mathrm{diag}(1, \delta_k^{-1}, \ldots, \delta_k^{-1})
	= \Id_{n_k-1} - z \cdot \mathrm{diag}(1+\epsilon_k^2, \delta_k^{-1}, \ldots, \delta_k^{-1})  + z \epsilon_k^2 \vect{e}^k(\vect{e}^k)^T,
	$$
	and
	$$
	\Id_{n_k-1} + z \cdot \mathrm{diag}(1, \delta_k^{-1}, \ldots, \delta_k^{-1}) = \Id_{n_k-1} + z \cdot \mathrm{diag}(1+\epsilon_k^2, \delta_k^{-1}, \ldots, \delta_k^{-1}) - z \epsilon_k^2 \vect{e}^k(\vect{e}^k)^T.
	$$%
	Exploiting the definition of the vector $\vect{f}$ in \cref{eqA55}, the foregoing can be expressed concisely as
	\begin{align*}
	R_\downarrow^T R_\downarrow &= D_\downarrow + z\vect{f}\vect{f}^T, \quad
	R_\downarrow^T R_\uparrow =0, \quad\text{and}\quad
	R_\uparrow^T R_\uparrow = D_\uparrow - z\vect{f}\vect{f}^T,
	\end{align*}
	where we introduced
	\begin{align*}
	D_\downarrow&= \Id_{\Sigma-1} - z \cdot \mathrm{diag}(1+\epsilon_1^2, \underbrace{\delta_1^{-1}, \ldots, \delta_1^{-1}}_{(n_1-2) \text{-times}}\,, \ldots, 1 + \epsilon_d^2, \underbrace{\delta_d^{-1}, \ldots, \delta_d^{-1}}_{(n_d-2) \text{-times}}), \text{ and} \\
	D_\uparrow &= \Id_{\Sigma-1} + z \cdot \mathrm{diag}(1+\epsilon_1^2, \underbrace{\delta_1^{-1}, \ldots, \delta_1^{-1}}_{(n_1-2) \text{-times}}\,, \ldots, 1 + \epsilon_d^2, \underbrace{\delta_d^{-1}, \ldots, \delta_d^{-1}}_{(n_d-2) \text{-times}}).
	\end{align*}
	
	\subsubsection*{Putting everything together}
	From the definition of the vector $\vect{f}$ in \cref{eqA55}, it is clear that we can construct a permutation matrix $P$ that moves the nonzero elements of $\vect{f}$ to the first $d$ positions:
	\begin{align*}
	 P \vect{f} = \begin{bmatrix} \vect{g} \\ 0 \end{bmatrix}, \text{ where } \vect{g} = \begin{bmatrix} \epsilon_1 & \cdots & \epsilon_d \end{bmatrix}^T
	\end{align*}
	and $0$ is a vector of zeros of length $\Sigma-1-d$. Applying $P^T$ on the right of the $R$'s yields
	\begin{align} \label{eqn_def_S_and_T}
	 R_\downarrow P^T := \begin{bmatrix} T_\downarrow & S_\downarrow \end{bmatrix} \text{ and }
	 R_\uparrow P^T := \begin{bmatrix} T_\uparrow & S_\uparrow \end{bmatrix},
	\end{align}
	where the $T$ matrices are respectively given by
	 {\small\begin{multline*}
	\sqrt{2} T_\downarrow =
	[
	  \vect{u}^1 \otimes \cdots \otimes \vect{u}^{k-1} \otimes \sten{\dot{u}}{2}{k} \otimes \vect{u}^{k+1} \otimes \cdots \otimes \vect{u}^d - \vect{v}^1 \otimes \\ \cdots \otimes \vect{v}^{k-1} \otimes \sten{\dot{v}}{2}{k} \otimes \vect{v}^{k+1} \otimes \cdots \otimes \vect{v}^d
	 ]_{k=1}^d,
	\end{multline*}}%
	and analogously for $T_\uparrow$ replacing the subtraction by an addition; the matrices $S_\downarrow$ and $S_\uparrow$ contain the remainder of the columns of $R_\downarrow$ and $R_\uparrow$ respectively.
	Then, we have
	\begin{align*}
	 P (D_\downarrow + z \vect{f} \vect{f}^T) P^T
	 &= \begin{bmatrix} E_\downarrow & 0 \\[3pt] 0 & F_\downarrow \end{bmatrix} + z \begin{bmatrix} \vect{g} \\[3pt] 0 \end{bmatrix} \begin{bmatrix} \vect{g}^T & 0^T \end{bmatrix}
	 = \begin{bmatrix} E_\downarrow + z \vect{g} \vect{g}^T & 0 \\[3pt] 0 & F_\downarrow \end{bmatrix}
	 = \begin{bmatrix} T_\downarrow^T T_\downarrow & T_\downarrow^T S_\downarrow \\[3pt] S_\downarrow^T T_\downarrow & S_\downarrow^T S_\downarrow \end{bmatrix}, \text{ and} \\ \nonumber
	 P (D_\uparrow - z \vect{f} \vect{f}^T) P^T
	 &= \begin{bmatrix} E_\uparrow & 0 \\[3pt] 0 & F_\uparrow \end{bmatrix} - z \begin{bmatrix} \vect{g} \\[3pt] 0 \end{bmatrix} \begin{bmatrix} \vect{g}^T & 0^T \end{bmatrix}
	 = \begin{bmatrix} E_\uparrow - z \vect{g}\vect{g}^T & 0 \\[3pt] 0 & F_\uparrow \end{bmatrix}
	 = \begin{bmatrix} T_\uparrow^T T_\uparrow & T_\uparrow^T S_\uparrow \\[3pt] S_\uparrow^T T_\uparrow & S_\uparrow^T S_\uparrow \end{bmatrix},
	\end{align*}%
	where
	\begin{align} \label{eqn_def_EandF}
	E_\downarrow &:= \Id_d - z \cdot \mathrm{diag}( 1 + \epsilon_1^2, 1 + \epsilon_2^2, \ldots, 1 + \epsilon_d^2 ), \\ \nonumber
	E_\uparrow &:= \Id_d + z \cdot \mathrm{diag}( 1 + \epsilon_1^2, 1 + \epsilon_2^2, \ldots, 1 + \epsilon_d^2 ), \\ \nonumber
	F_\downarrow &:= \Id_{\Sigma-1-d} - z \cdot \mathrm{diag}( \underbrace{\delta_1^{-1}, \ldots, \delta_1^{-1}}_{(n_1-2) \text{-times}}\,, \ldots, \underbrace{\delta_d^{-1}, \ldots, \delta_d^{-1}}_{(n_d-2) \text{-times}} ), \text{ and} \\ \nonumber
	F_\uparrow &:= \Id_{\Sigma-1-d} + z \cdot \mathrm{diag}( \underbrace{\delta_1^{-1}, \ldots, \delta_1^{-1}}_{(n_1-2) \text{-times}}\,, \ldots, \underbrace{\delta_d^{-1}, \ldots, \delta_d^{-1}}_{(n_d-2) \text{-times}} ).
	\end{align}
	Hence, by swapping rows and columns of $G$, which leaves the value of $q$ unchanged because they are orthogonal operations, we find that $q(G) = q(G')$ with
	\begin{align} \label{eqn_everything}
	G'
	&:= \begin{bmatrix}
	\vect{a}_\downarrow^T \vect{a}_\downarrow & \vect{a}_\downarrow^T T_\downarrow & \vect{a}_\downarrow^T S_\downarrow & \vect{a}_\downarrow^T\vect{a}_\uparrow & \vect{a}_\downarrow^T T_\uparrow & \vect{a}_\downarrow^T S_\uparrow \\[0.2cm]
	T_\downarrow^T \vect{a}_\downarrow & T_\downarrow^T T_\downarrow & T_\downarrow^T S_\downarrow & T_\downarrow^T\vect{a}_\uparrow & T_\downarrow^T T_\uparrow & T_\downarrow^T S_\uparrow \\[0.2cm]
	S_\downarrow^T \vect{a}_\downarrow & S_\downarrow^T T_\downarrow & S_\downarrow^T S_\downarrow & S_\downarrow^T\vect{a}_\uparrow & S_\downarrow^T T_\uparrow & S_\downarrow^T S_\uparrow \\[0.2cm]
	\vect{a}_\uparrow^T \vect{a}_\downarrow & \vect{a}_\uparrow^T T_\downarrow & \vect{a}_\uparrow^T S_\downarrow & \vect{a}_\uparrow^T\vect{a}_\uparrow & \vect{a}_\uparrow^T T_\uparrow & \vect{a}_\uparrow^T S_\uparrow \\[0.2cm]
	T_\uparrow^T \vect{a}_\downarrow & T_\uparrow^T T_\downarrow & T_\uparrow^T S_\downarrow & T_\uparrow^T\vect{a}_\uparrow & T_\uparrow^T T_\uparrow & T_\uparrow^T S_\uparrow \\[0.2cm]
	S_\uparrow^T \vect{a}_\downarrow & S_\uparrow^T T_\downarrow & S_\uparrow^T S_\downarrow & S_\uparrow^T\vect{a}_\uparrow & S_\uparrow^T T_\uparrow & S_\uparrow^T S_\uparrow
	\end{bmatrix} \\ \nonumber
	&=
	\begin{bmatrix}
	 1 + z & z \vect{g}^T & 0 & 0 & 0 & 0 \\[0.2cm]
	 z \vect{g} & E_\downarrow + z \vect{g} \vect{g}^T & 0 & 0 & 0 & 0 \\[0.2cm]
	 0 & 0 & F_\downarrow & 0 & 0 & 0 \\[0.2cm]
	 0 & 0 & 0 & 1 -z & -z \vect{g}^T & 0 \\[0.2cm]
	 0 & 0 & 0 & -z \vect{g} & E_\uparrow - z \vect{g} \vect{g}^T & 0 \\[0.2cm]
	 0 & 0 & 0 & 0 & 0 & F_\uparrow
	\end{bmatrix}.
	\end{align}
	
	\subsubsection*{Bounding $q(G)$}
	To simplify more, we write
	\[
	G_\downarrow := \begin{bmatrix} 1 & 0 \\ 0 & E_\downarrow  \end{bmatrix},\quad
	G_\uparrow := \begin{bmatrix} 1 & 0 \\ 0 & E_\uparrow \end{bmatrix},\quad \text{ and }\quad
	\vect{h} := \begin{bmatrix} 1 \\ \vect{g}\end{bmatrix},
	\]
	so that
	\[
	q(G) = q(G')  = q\left(\begin{bmatrix}
	 G_\downarrow + z\vect{h}\vect{h}^T & 0 & 0 & 0 \\
	0 & G_\uparrow - z\vect{h}\vect{h}^T & 0 & 0\\
	0 & 0 & F_\downarrow & 0 \\
	0 & 0 & 0 &  F_\uparrow
	\end{bmatrix}\right),
	\]
	where we swapped some rows and columns again.
	Because the smallest singular value of the matrix $G_\uparrow - z\vect{h}\vect{h}^T$ is larger than or equal to the smallest singular value of the positive semidefinite matrix $G'$, we obtain the bound
	\begin{equation}\label{eqA62}
	q(G) \geq q( G_\uparrow - z\vect{h}\vect{h}^T ) \det( G_\downarrow + z\vect{h}\vect{h}^T ) \det(F_\downarrow) \det(F_\uparrow).
	\end{equation}
	
	We now obtain bounds on the individual factors on the right-hand side of \cref{eqA62}.
	First, we compute the determinants of the diagonal matrices:
	\[
	\det(F_\downarrow) \det(F_\uparrow)
	= \prod_{i=1}^d \bigl( (1-z \delta_i^{-1}) (1 + z \delta_i^{-1}) \bigr)^{n_i-2}
	\ge \prod_{i=1}^d ( 1-z \delta_i^{-1} )^{n_i-2},
	\]
	having used $0 < z = \delta_1 \cdots \delta_d \le 1$. Next, we see that
	\begin{align*} 
	 1 - z \delta_i^{-1} \ge 1 - \delta_{\max}^{d-1} \ge 1 - \delta_{\max} > \left(\frac{9}{10}\right)^2(1 - \delta_1), \text{ where } \delta_{\max} := \max\{\delta_1, \ldots, \delta_d \}.
	\end{align*}
	Note that we used $2 \left(\frac{9}{10}\right)^2 (1-\delta_1) = \left(\frac{9}{10}\right)^2 \| \vect{u}^1 - \vect{v}^1 \|^2 < \| \vect{u}^k - \vect{v}^k \|^2 = 2 (1-\delta_k)$ in the last inequality. As a result, we obtain
	\begin{align} \label{eq70}
	 \det(F_\downarrow) \det(F_\uparrow)
	 > \left(\frac{81}{100}\right)^{\Sigma-d-1} (1-\delta_1)^{\Sigma-d-1}
	 \ge \left( \frac{1-\delta_1}{2} \right)^{\Sigma-d-1}.
	\end{align}
	
	The final determinant in \cref{eqA62} can be computed as follows. Note that $z \vect{h} \vect{h}^T$ is a symmetric matrix with one positive eigenvalue and all others zero. Hence, it follows from Weyl's inequalities \cite[Theorem 4.3.7]{HJ1990} that the eigenvalues of $G_\downarrow$ cannot decrease by adding $z \vect{h} \vect{h}^T$. Hence,
	\begin{align*}
		 \det( G_\downarrow + z\vect{h}\vect{h}^T )
		 \ge \det(G_\downarrow)
		 = \det(E_\downarrow)
		 = \prod_{i=1}^d (1 - z(1+\epsilon_i^2)).
	 \end{align*}
	
		 Next, we bound $1 - z(1+\epsilon_i^2)$ from below and above. From \eqref{eqn_proof_epsilonbound} and \eqref{deltas} we have for some universal constant $C>0$:
		 \begin{align}\label{newboundU}
		 1-z(1+\epsilon_i^2)&\leq 1-(1-C\epsilon_1^2)^{d+1}\\
		&=(1-(1-C\epsilon_1^2))(1+(1-C\epsilon_1^2)+\cdots+(1-C\epsilon_1^2)^d)\nonumber\\
		&\leq C(d+1)\epsilon_1^2.\nonumber
	\end{align}
		 For obtaining the lower bound, note that
		 \begin{multline*}
		 z = \delta_1\cdots\delta_d
		 \leq \sqrt{\frac{1}{(1+\epsilon_1^2)(1+\epsilon_2^2)(1+\epsilon_3^2)}}
		 \leq \sqrt{\frac{1}{1+\epsilon_1^2} \frac{1}{(1+(9/10)^4 \epsilon_1^2)^2}} \\= \frac{1}{1+(9/10)^4 \epsilon_1^2} \sqrt{\frac{1}{1+\epsilon_1^2}},
		 \end{multline*}
		so that
			  \begin{equation}\label{newboundL}
		 1-z(1+\epsilon_i^2)\geq 1-\frac{\sqrt{1+\epsilon_1^2}}{1+(9/10)^4\epsilon_1^2}\geq \frac{\epsilon_1^2}{8}.
		 \end{equation}
	By \cref{eqn_proof_epsilonbound}, $\epsilon_1^2 \ge 2(1 - \delta_1)$, so that
	\begin{align} \label{eq71}
	\det( G_\downarrow + z\vect{h}\vect{h}^T )
	\ge \left(\frac{2}{8}\right)^d (1 - \delta_1)^d
	= 2^{-d} \left(\frac{1 - \delta_1}{2}\right)^d.
	\end{align}
	
	The proof can be completed by a fortunate application of Ger\v{s}gorin's theorem \cite[Satz~III]{Gerschgorin1931}. According to this theorem, the eigenvalues of
	\[
	G^\uparrow - z\vect{h}\vect{h}^T =
	\begin{bmatrix}
	 1 - z      & -z\epsilon_1            & -z\epsilon_2            & \cdots & -z\epsilon_d \\
	 -z\epsilon_1 & 1 + z                 & -z\epsilon_1 \epsilon_2 & \cdots & -z\epsilon_1 \epsilon_d \\
	 -z\epsilon_2 & -z\epsilon_1 \epsilon_2 & \ddots                &        & \vdots \\
	 \vdots     & \vdots                    &                       & 1+z    & -z\epsilon_{d-1}\epsilon_d \\
	 -z\epsilon_d & -z\epsilon_1 \epsilon_d & \cdots                & -z\epsilon_{d-1}\epsilon_d & 1+z
	\end{bmatrix}
	\]
	are contained in the following {Ger\v{s}gorin discs
	\begin{align*}
		 \mathrm{disc}_0 &:= \Bigl\{ x \in \C \;\;|\;\; | (1-z) - x | \le z \sum_{k=1}^d \epsilon_k \Bigr\}, \text{ and}\\
		 \mathrm{disc}_i &:= \Bigl\{ x \in \C \;\;|\;\; | (1+z) - x | \le z\epsilon_k + z\epsilon_k \sum_{j\ne k} \epsilon_j\Bigr\}, \quad i=1, \ldots, d.
		\end{align*}
	{Recall from \cref{def_z} that $z = \delta_1\cdots\delta_d$ and from \cref{eqn_proof_trigonio} that $\delta_k = \sqrt{\tfrac{1}{1+\epsilon_k^2}}$. By choosing $\epsilon = \epsilon_1$ small enough, we can get $z$ as close to $1$ and $\epsilon_k \le \epsilon_1$ as close to $0$ as we want. Therefore, if we choose $\epsilon$ small enough,} $\mathrm{disc}_0$ is a small disc near zero, and $\mathrm{disc}_k$ are small, pairwise overlapping discs near two. When $\epsilon$ is sufficiently small, $\mathrm{disc}_0$ is disjoint from the other discs, so that $\mathrm{disc}_0$ contains exactly $1$ eigenvalue close to $0$, and $\bigcup_{i=1}^d \mathrm{disc}_i$ contains $d$ eigenvalues close to $2$. Furthermore, since we are dealing with symmetric matrices, all the eigenvalues are real.} Therefore, for sufficiently small
	$\epsilon$,
	\begin{align}
		\label{eq72}
	 q(G_\uparrow - z \vect{h}\vect{h}^T) \ge (1 + z - d \epsilon_1)^d \ge 1.
	\end{align}
	Finally, plugging \cref{eq70}, \cref{eq71}, and \cref{eq72} into \cref{eqA62}, we find
	\[
	q(G) \geq 2^{-d} \left( \frac{1-\delta_1}{2} \right)^{\Sigma -1}.
	\]
	This finishes the proof.
	
	\section{Proof of Lemma \ref{important_lemma}}\label{sec:proof_important_lemma}
	For the proof of \cref{important_lemma}, we need the following two simple results. The first one is a well-known result.
	
	\begin{lemma}\label{lemmaB1}
	For $1\leq k\leq d$, let $\Var{A}_k \subset \R^{n_k}$ be a linear subspace of dimension $m_k$ and let $\Var{A}_k^\perp$ be its orthogonal complement. Put $\Var{V} := \Var{A}_1 \otimes \cdots\otimes \Var{A}_d$ and $\Var{W} := \Var{A}_1^\perp\otimes \cdots\otimes \Var{A}_d^\perp$ and let $\tensor{A}\in \Var{S}_{n_1,\ldots,n_d} \cap \Var{V}$ and $\tensor{B} \in \Var S_{n_1,\ldots,n_d} \cap \Var W$. Then, the tangent spaces $\Tang{\tensor{A}}{\Var{S}_{n_1,\ldots,n_d}}$ and $\Tang{\tensor{B}}{\Var{S}_{n_1,\ldots,n_d}}$ are orthogonal to each other.
	\end{lemma}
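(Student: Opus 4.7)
The plan is to reduce the claim to a direct computation using the formula for the Euclidean inner product of rank-$1$ tensors from \cref{inner_prod_rank_one}, together with the explicit description of the tangent space of the Segre manifold from \cref{tangent_space_segre}. Throughout, I tacitly use the standing assumption $d\geq 3$ of the paper.

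First, I would argue that the rank-$1$ tensor $\tensor{A}\in\Var{S}_{n_1,\ldots,n_d}\cap\Var{V}$ must admit a decomposition $\tensor{A}=\vect{a}^1\otimes\cdots\otimes\vect{a}^d$ with $\vect{a}^k\in\Var{A}_k\setminus\{0\}$. This follows from the standard uniqueness (up to scaling on each factor) of a nonzero rank-$1$ decomposition: writing $\tensor{A}=\vect{u}^1\otimes\cdots\otimes\vect{u}^d$ and applying $P_1\otimes\cdots\otimes P_d$, where $P_k$ is the orthogonal projection onto $\Var{A}_k$, yields another decomposition that must agree with the original up to a rescaling on each factor, forcing $\vect{u}^k\in\Var{A}_k$ after rescaling. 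Analogously $\tensor{B}=\vect{b}^1\otimes\cdots\otimes\vect{b}^d$ with $\vect{b}^k\in\Var{A}_k^\perp\setminus\{0\}$.

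Next, by \cref{tangent_space_segre}, any element of $\Tang{\tensor{A}}{\Var{S}_{n_1,\ldots,n_d}}$ is a linear combination of tensors of the form $\vect{a}^1\otimes\cdots\otimes\vect{a}^{k-1}\otimes\vect{x}\otimes\vect{a}^{k+1}\otimes\cdots\otimes\vect{a}^d$ for some $1\le k\le d$ and $\vect{x}\in\R^{n_k}$, and similarly for $\Tang{\tensor{B}}{\Var{S}_{n_1,\ldots,n_d}}$ replacing the $\vect{a}^j$'s by the $\vect{b}^j$'s. By bilinearity of the inner product, it suffices to check that two such elementary tensors are orthogonal. Using \cref{inner_prod_rank_one}, the inner product of an elementary tangent vector at $\tensor{A}$ in slot $k$ and an elementary tangent vector at $\tensor{B}$ in slot $l$ is a product of $d$ scalar inner products; if $k=l$ the product contains the factors $\langle\vect{a}^j,\vect{b}^j\rangle$ for all $j\neq k$, and if $k\neq l$ it contains $\langle\vect{a}^j,\vect{b}^j\rangle$ for all $j\notin\{k,l\}$.

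Finally, since $\vect{a}^j\in\Var{A}_j$ and $\vect{b}^j\in\Var{A}_j^\perp$ one has $\langle\vect{a}^j,\vect{b}^j\rangle=0$ for every $j$, and because $d\geq 3$ there is always at least one such vanishing factor in the product. Hence every pair of elementary tangent vectors is orthogonal, and the claim follows. The only minor obstacle is the bookkeeping in the case $k\neq l$, where one has to observe that $d\geq 3$ is precisely what guarantees the existence of an index $j\notin\{k,l\}$ to produce a vanishing factor.
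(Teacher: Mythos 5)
Your proof is correct and takes essentially the same route as the paper: decompose $\tensor{A}$ and $\tensor{B}$ with factors lying in $\Var{A}_k$ and $\Var{A}_k^\perp$ respectively, expand tangent vectors in terms of the elementary generators from \cref{tangent_space_segre}, and kill every inner product of generators via \cref{inner_prod_rank_one} and $d\ge 3$. The only addition is your explicit projection argument showing $\vect{a}^k\in\Var{A}_k$, a step the paper treats as immediate; it is a sound and harmless bit of extra rigor.
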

	\begin{proof}
	Let us write
			$\tensor{A} = \vect{a}^1\otimes \cdots \otimes \vect{a}^d$ and $\tensor{B} = \vect{b}^1\otimes \cdots \otimes \vect{b}^d$. By \cref{tangent_space_segre}, all vectors of the form $\vect{a}^1\otimes \cdots \otimes \vect{a}^{k-1}\otimes \vect{v}\otimes \vect{a}^{k+1}\otimes\cdots \otimes \vect{a}^d$ for $1\leq k\leq d$ and where $\vect{v}\in \R^{n_k}$ constitute a generating set of $\Tang{\tensor{A}}{\Var{S}_{n_1,\ldots,n_d}}$. A similar statement holds for $\Tang{\tensor{B}}{\Var{S}_{n_1,\ldots,n_d}}$.
			Then, by~\cref{inner_prod_rank_one}, the inner product between two such generators $\vect{t}:=\vect{a}^1\otimes \cdots \otimes \vect{a}^{k-1}\otimes \vect{v}\otimes \vect{a}^{k+1}\otimes\cdots \otimes \vect{a}^d$ and $\vect{s}:=\vect{b}^1\otimes \cdots \otimes \vect{b}^{\ell-1}\otimes \vect{w}\otimes \vect{b}^{\ell+1}\otimes\cdots \otimes \vect{b}^d$ is
		\[
		\langle\vect{t}, \vect{s}\rangle =
		\begin{cases}
			\langle \vect{v}, \vect{w}\rangle \, \prod_{j\neq k} \langle \vect{a}^j, \vect{b}^j\rangle, & \text{ if } k = \ell\\
			\langle \vect{a}^\ell, \vect{w}\rangle \,\langle \vect{v}, \vect{b}^k\rangle\, \prod_{j\neq k,\ell} \langle \vect{a}^j, \vect{b}^j\rangle, & \text{ if } k \neq \ell
		\end{cases} \;= 0,
		\]
		because $d\geq 3$ and $\langle \vect{a}^j, \vect{b}^j \rangle=0$ for all $1\leq j\leq d$. This completes the proof.
	\end{proof}

	\begin{lemma}\label{lemmaB2}
	For all $1\leq k\leq d$, let $\Var{A}_k\subset \R^{n_k}$ be a linear subspace of dimension $m_i$, and define the tensor space $\Var{V} := \Var{A}_1\otimes \cdots\otimes \Var{A}_d$. Then, the following holds.
	\begin{enumerate}
	\item $\Var S_{n_1,\ldots,n_d} \cap \Var V$ is a manifold.
	\item For $1\leq k\leq d$, let $U_k\in \R^{n_k\times m_k}$ be a matrix whose columns form an orthonormal basis for $\Var{A}_k$. The map
	\[
	\Var{S}_{m_1,\ldots,m_d}  \to \Var{S}_{n_1,\ldots,n_d} \cap \Var V,\; \vect{a}^1\otimes \cdots\otimes \vect{a}^d \mapsto (U_1 \vect{a}^1)\otimes \cdots\otimes (U_d \vect{a}^d)
	\]
	is an isometry.
	\end{enumerate}
	\end{lemma}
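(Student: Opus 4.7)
My plan is to deduce part (1) as a corollary of part (2), so the work concentrates on proving that the proposed map, which I will call $\Phi$, is a well-defined bijective Riemannian isometry onto its image.

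\textbf{Well-definedness and injectivity.} Because each $U_k \in \R^{n_k \times m_k}$ has orthonormal columns, it is injective, so every $U_k \vect{a}^k$ lies in $\Var{A}_k \setminus \{0\}$ whenever $\vect{a}^k \neq 0$. Therefore $\Phi$ sends a rank-$1$ tensor in $\Var{S}_{m_1,\ldots,m_d}$ to a nonzero rank-$1$ tensor contained in $\Var V$, hence to an element of $\Var{S}_{n_1,\ldots,n_d} \cap \Var{V}$. The fact that $\Phi$ depends only on the tensor $\vect{a}^1 \otimes \cdots \otimes \vect{a}^d$ and not on the choice of factors is a direct consequence of the standard scaling ambiguity of rank-$1$ decompositions: if $\vect{a}^1 \otimes \cdots \otimes \vect{a}^d = \vect{c}^1 \otimes \cdots \otimes \vect{c}^d$, then $\vect{a}^k = \lambda_k \vect{c}^k$ with $\prod_k \lambda_k = 1$, and applying the linear maps $U_k$ preserves this scaling relation. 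The same scaling argument, combined with the injectivity of each $U_k$, yields injectivity of $\Phi$.

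\textbf{Surjectivity.} This is where I expect most of the care to be needed. Given an arbitrary rank-$1$ tensor $\vect{b}^1 \otimes \cdots \otimes \vect{b}^d \in \Var{V}$, I would apply the orthogonal projector $P_1 \otimes \cdots \otimes P_d$ onto $\Var{V}$, where $P_k = U_k U_k^T$ is the orthogonal projector onto $\Var A_k$. Since the tensor lies in $\Var{V}$, the projector fixes it, giving
\[
\vect{b}^1 \otimes \cdots \otimes \vect{b}^d = (P_1 \vect{b}^1) \otimes \cdots \otimes (P_d \vect{b}^d).
\]
Because the left-hand side is nonzero, each $P_k \vect{b}^k$ is nonzero, and the uniqueness of rank-$1$ decomposition up to scaling forces $\vect{b}^k$ to be a nonzero scalar multiple of $P_k \vect{b}^k$. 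Hence $\vect{b}^k \in \Var{A}_k$, so $\vect{b}^k = U_k \vect{a}^k$ for some $\vect{a}^k \in \R^{m_k} \setminus \{0\}$, placing the tensor in the image of $\Phi$.

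\textbf{Isometry.} By \cref{tangent_space_segre}, the tangent space to $\Var{S}_{m_1,\ldots,m_d}$ at $\vect{a}^1 \otimes \cdots \otimes \vect{a}^d$ is spanned by tensors of the form $\vect{a}^1 \otimes \cdots \otimes \vect{a}^{k-1} \otimes \vect{v} \otimes \vect{a}^{k+1} \otimes \cdots \otimes \vect{a}^d$. By linearity and the product rule, the differential of $\Phi$ sends this tangent vector to $(U_1 \vect{a}^1) \otimes \cdots \otimes (U_k \vect{v}) \otimes \cdots \otimes (U_d \vect{a}^d)$. Using the formula \cref{inner_prod_rank_one} for inner products of rank-$1$ tensors and the identity $U_k^T U_k = \Id_{m_k}$, every factor $\langle U_k \vect{x}, U_k \vect{y} \rangle = \langle \vect{x}, \vect{y} \rangle$ matches the corresponding factor before the map. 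Therefore the inner product of any two tangent vectors is preserved, and the differential is an isometry on each tangent space. This proves (2).

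\textbf{Manifold structure.} Part (1) follows: the Segre manifold $\Var{S}_{m_1,\ldots,m_d}$ is a smooth manifold of dimension $1 + \sum_k (m_k - 1)$, and $\Phi$ is a smooth bijection onto $\Var{S}_{n_1,\ldots,n_d} \cap \Var V$ whose differential has full rank everywhere (being an isometry). Consequently $\Phi$ is a diffeomorphism, so $\Var{S}_{n_1,\ldots,n_d} \cap \Var{V}$ inherits a smooth manifold structure. The main obstacle, as noted, is the surjectivity step; everything else is straightforward linear algebra combined with the rank-$1$ uniqueness principle.
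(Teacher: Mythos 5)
Your proposal is correct and proves the same statement, but it takes a more hands-on route than the paper. The paper's proof is considerably more compact: it observes that $U := U_1 \otimes \cdots \otimes U_d$ extends to a linear map $\R^{m_1} \otimes \cdots \otimes \R^{m_d} \to \R^{n_1} \otimes \cdots \otimes \R^{n_d}$ that is an isometric linear isomorphism onto $\Var V$ (with inverse $U_1^T \otimes \cdots \otimes U_d^T$ restricted to $\Var V$), asserts $U(\Var S_{m_1,\ldots,m_d}) = \Var S_{n_1,\ldots,n_d} \cap \Var V$, and then invokes the facts that (i) the image of a manifold under a linear isomorphism is a manifold and (ii) a linear isometric embedding is automatically a Riemannian isometry when restricted to any submanifold. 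Your proof unpacks both of these facts: you verify surjectivity of the induced map by a projector argument (the paper states this equality ``by definition'' without detail, so your argument fills a small expository gap), and you verify the isometry by explicitly computing the differential on the tangent space using \cref{tangent_space_segre} and \cref{inner_prod_rank_one}. Both approaches are sound; the paper's buys brevity by working at the level of linear maps, whereas yours buys transparency, particularly for the surjectivity step.

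One small point worth tightening: in your final step you argue that a smooth bijection with everywhere full-rank differential is a diffeomorphism onto a submanifold. In general a bijective immersion need not be an embedding, so one should add a word about why $\Var S_{n_1,\ldots,n_d} \cap \Var V$ carries the subspace topology correctly --- the cleanest way is exactly the paper's observation that $\Phi$ is the restriction of the linear isomorphism $U_1 \otimes \cdots \otimes U_d$ onto $\Var V$, which is in particular a homeomorphism onto its image, so the image is an embedded submanifold. You implicitly have this (your differential computation shows the map is the restriction of that linear map), but it is the underlying reason (1) holds and is worth stating.
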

	\begin{proof}
	Let us write $U_1\otimes \cdots \otimes U_d$ for the map from (2). It extends to a linear map $\R^{m_1}\otimes \cdots\otimes\R^{m_d} \to \R^{n_1}\otimes \cdots\otimes\R^{n_d}$ because of the universal property \cite[Chapter 1]{Greub1978}.
	By definition, we have $(U_1\otimes \cdots \otimes U_d)(\Var{S}_{m_1,\ldots,m_d}) = \Var{S}_{n_1,\ldots,n_d} \cap \Var{V}$ and $(U_1^T\otimes \cdots \otimes U_d^T)(\Var{S}_{n_1,\ldots,n_d} \cap \Var{V}) = \Var{S}_{m_1,\ldots,m_d}$. Hence, $\Var{S}_{n_1,\ldots,n_d} \cap \Var{V}$ is the image of a manifold under an invertible linear map. This implies that $\Var{S}_{n_1,\ldots,n_d} \cap \Var{V}$ itself is a manifold. Furthermore,~$U$ preserves the Euclidean inner product, and so the manifolds $\Var{S}_{n_1,\ldots,n_d} \cap \Var{V}$ and $\Var{S}_{m_1,\ldots,m_d}$ are isometric.
	\end{proof}
	
	We are now ready to prove \cref{important_lemma}.
	
	\begin{proof}[Proof of Lemma \ref{important_lemma}]
	We assumed that $\sigma_{r-2; n_1-2,\ldots,n_d-2}\subset \R^{n_1-2}\otimes \cdots\otimes \R^{n_d-2}$ is generically complex identifiable. Then, \cref{prop:delotropaper} (2) tells us that $\Var M_{r-2;n_1-2,\ldots,n_d-2}$ is Zariski-open in $(\Var S_{n_1-2,\ldots,n_d-2})^{\times (r-2)}$. {Fix any} tuple $(\tensor{X}_1,\ldots,\tensor{X}_{r-2})\in \Var M_{r-2;n_1-2,\ldots,n_d-2}$.
	By definition of $\Var{M}_{r-2;n_1-2,\ldots,n_d-2}$, the least singular value of the derivative  $\deriv{\Phi}{(\tensor{X}_1,\ldots,\tensor{X}_{r-2})}$ of the addition map $\Phi: (\Var S_{n_1-2,\ldots,n_d-2})^{\times (r-2)}\to \Var{N}_{r-2;n_1-2,\ldots,n_d-2}$ is positive. This implies that
	$\vert \det \deriv{\Phi}{(\tensor{X}_1,\ldots,\tensor{X}_{r-2})} \vert > 0.$
	Hence, if $X_i$ is a matrix whose columns form an orthonormal basis for $\Tang{\Var S_{n_1-2, \ldots,n_d-2}}{\tensor{X}_i}$, we have
	\[
	\vert \det \deriv{\Phi}{(\tensor{X}_1,\ldots,\tensor{X}_{r-2})} \vert = \mathrm{vol}(\begin{bmatrix} X_1 & \ldots & X_{r-2}\end{bmatrix}) > 0.
	\]
	Moreover, if we write {$\tensor{X}_i = \nu_i \vect{x}^1_i \otimes \cdots \otimes \vect{x}^d_i$ with $\|\vect{x}_i^k\| =1$ and $\nu_i = \|\tensor{X}_i\|$,} then we have
	\begin{equation*}
	\mathrm{vol}\begin{bmatrix}
	\vect{x}_1^{h_1} \otimes \cdots \otimes \vect{x}_1^{h_{d-1}} & \cdots & \vect{x}_{r-2}^{h_1} \otimes \cdots \otimes \vect{x}_{r-2}^{h_{d-1}}
	\end{bmatrix}>0
	\end{equation*}
	for all subsets $\{h_1,\ldots,h_{d-1}\}\subset \{1,\ldots,d\}$ of cardinality $d-1$. Indeed, suppose to the contrary that the foregoing matrix would have linearly dependent columns for some subset; w.l.o.g., we can assume $h_i = i$ for $i=1,\ldots,d-1$. Then there are nonzero $\lambda_i$ such that $\sum_{i=1}^{r-2} \lambda_i \sten{x}{i}{1}\otimes\cdots\otimes\sten{x}{i}{d-1} = 0$. Tensoring with an arbitrary vector $\vect{v}\in\R^{n_d-2}$ yields $\sum_{i=1}^{r-2} \lambda_i \sten{x}{i}{1}\otimes\cdots\otimes\sten{x}{i}{d-1} \otimes \vect{v} = 0 \otimes \vect{v} = 0$, having exploited multilinearity. Note that the $i$th term in the last sum lives in $\Tang{\tensor{X}_i}{\Var{S}_{{n_1-2},\ldots,{n_d-2}}}${, and can be obtained as a linear combination of the columns of the $i$--th block of Terracini's matrix}, so that \cref{characterization_CN} implies that $\kappa(\tensor{X}_1, \ldots, \tensor{X}_{r-2}) = \infty$, which contradicts $(\tensor{X}_1, \ldots, \tensor{X}_{r-2}) \in \Var{M}_{r-2;n_1-2,\ldots,n_d-2}$.
	We define the following {constant}, which will play an important role in this proof:
	\[
	\mu := \frac{\mathrm{vol}(\begin{bmatrix} X_1 & \cdots & X_{r-2}\end{bmatrix})}{2}\hspace{-.2cm}\prod_{1\leq h_1<\cdots<h_{d-1}\leq d} \mathrm{vol}\begin{bmatrix}
	\vect{x}_1^{h_1} \otimes \cdots \otimes \vect{x}_1^{h_{d-1}} & \cdots & \vect{x}_{r-2}^{h_1} \otimes \cdots \otimes \vect{x}_{r-2}^{h_{d-1}}
	\end{bmatrix}.
	\]
	It is important to note that $\mu > 0$ is only defined through the choice of $(\tensor{X}_1,\ldots,\tensor{X}_{r-2})$. The key observation is that we can choose $\mu$ \emph{independently of what follows}.
	
	Recall that the restriction of $\phi$ to $\Var N_{2;n_1,\ldots,n_d} \times (\Var S_{n_1,\ldots,n_d})^{\times (r-2)}$ is
	\[
	\phi : \Var N_{2;n_1,\ldots,n_d} \times (\Var S_{n_1,\ldots,n_d})^{\times r-2} \to \Var N_r^*,\; (\tensor{B},\tensor{A}_1,\ldots,\tensor{A}_{r-2}) \mapsto\tensor{B}+\sum_{i=1}^{r-2}\tensor{A}_i.
	\]
	In the terminology of \cite{BV2017}, $\Var N_r^*$ is the \emph{join} of $\Var N_{2;n_1,\ldots,n_d}$ and $r-2$ copies of $\Var S_{n_1,\ldots,n_d}$.
	
	Let $\tensor{B}\in \Var{N}_{2; n_1,\ldots,n_d}$.
	By construction, the tensor $\tensor{B}$ has multilinear rank bounded by $(2,\ldots,2)$, meaning that there exists a tensor subspace $\Var{V} = \Var{A}_1 \otimes \cdots \otimes \Var{A}_d \subset \R^{n_1} \otimes \cdots \otimes \R^{n_d}$ where the linear subspace $\Var{A}_k \subset \R^{n_k}$ has $\dim \Var{A}_k = 2$ and such that $\tensor{A} \in \Var V$.
	For each $1\leq k\leq d$ we denote the orthogonal complement of $\Var{A}_k$ in $\mathbb{R}^{n_k}$ by $\Var{A}_k^\perp$. Let us define $\Var{W} := \Var{A}_1^\perp \otimes \cdots \otimes \Var{A}_d^\perp$.
	
	Now, we make the following choice of rank-$1$ tensors
		$\tensor{A}_1,\ldots,\tensor{A}_{r-2} \in \Var{S}_{n_1,\ldots,n_d} \cap \Var{W}$: for $1\leq k \leq d$ let $U_k \in \R^{\Pi \times n_k-2}$ be a matrix whose columns form an orthonormal basis of $\Var{A}_k^\perp$. Then, by \cref{lemmaB2}, the map
		\[
		U: \Var{S}_{n_1-2,\ldots,n_d-2}  \to \Var{S}_{n_1,\ldots,n_d} \cap \Var{W},\; \vect{a}^1\otimes \cdots\otimes \vect{a}^d \mapsto (U_1 \vect{a}^1)\otimes \cdots\otimes (U_d \vect{a}^d)
		\]
	is an isometry of manifolds. For $1\leq i\leq r-2$ we define $\tensor{A}_i:=U(\tensor{X}_i)$, where $\tensor{X}_i$ are the rank-$1$ tensors from above. {We write $\tensor{A}_i = \nu_i \vect a^1_i \otimes \cdots\otimes \vect a^d_i$. Because $U$ is an isometry, $\Vert \tensor{X}_i\Vert = \Vert \tensor{A}_i\Vert = \nu_i$, so we can choose $\Vert \vect a^1_i \Vert = \cdots =\Vert \vect a^d_i\Vert = 1$.} The plan for the rest of the proof is to show that $\mathrm{Jac}(\phi)(\tensor{B},\tensor{A}_1,\ldots,\tensor{A}_{r-2})$ is a finite value which does not depend on $\tensor{B}$, and that there is a neighborhood around $(\tensor{A}_1,\ldots,\tensor{A}_{r-2})$, whose size is also independent of $\tensor{B}$, on which the Jacobian does not deviate too much.
	
	For showing this, we first compute the derivative of $\phi$ at the point $(\tensor{B},\tensor{A}_1,\ldots,\tensor{A}_{r-2})$:
			\begin{align*}
			\deriv{\phi}{(\tensor{B},\tensor{A}_1,\ldots,\tensor{A}_{r-2})} : \; \Tang{\tensor{B}}{\Var N_{2;n_1,\ldots,n_d}} \times \Tang{\tensor{A}_1}{\Var S_{n_1,\ldots,n_d}}\times \cdots \times \Tang{\tensor{A}_{r-2}}{\Var S_{n_1,\ldots,n_d}} &\to \Tang{\tensor{A}}{\Var N^*_r},\\
				(\dot{\tensor{B}},\dot{\tensor{A}}_1,\ldots,\dot{\tensor{A}}_{r-2}) &\mapsto \dot{\tensor{B}}+\sum_{i=1}^{r-2}\dot{\tensor{A}}_i,
			\end{align*}
	where $\tensor{A} = \tensor{B}+\sum_{i=1}^{r-2}\tensor{A}_i$.
	Let $V\in \R^{\Pi\times 2\Sigma}$ be a matrix whose columns form an orthonormal basis of $\Tang{\tensor{B}}{\Var N_{2;n_1,\ldots,n_d}}$ and, for $1\leq i\leq r-2$ let $W_i\in \R^{\Pi\times \Sigma}$ be a matrix whose columns form an orthonormal basis of $\Tang{\tensor{A}_i}{\Var S_{n_1,\ldots,n_d}}$. Then, we have
			\begin{equation}\label{eqB1}
				\mathrm{Jac}(\phi)(\tensor{B},\tensor{A}_1,\ldots,\tensor{A}_{r-2}) = \mathrm{vol}(\begin{bmatrix} V & W_1&\cdots&W_{r-2} \end{bmatrix}).
			\end{equation}
	Let us write $\tensor{B}=\tensor{B}_1+\tensor{B}_2$ with $\tensor{B}_1,\tensor{B}_2\in \Var S_{n_1,\ldots,n_d}$. From \cite[Corollary 2.2]{BL2013} we know that $\tensor{B}_1, \tensor{B}_2\in \Var V$. Moreover, by \cref{tangent_space_N},
	$\Tang{\tensor{B}}{\Var N_2} = \Tang{\tensor{B}_1}{\Var S_{n_1,\ldots,n_d}} + \Tang{\tensor{B}_{2}}{\Var S_{n_1,\ldots,n_d}}$. Since, $\tensor{A}_1,\ldots,\tensor{A}_{r-2}$ are, by assumption, elements of $\Var W$,
	\cref{lemmaB1} implies that $\Tang{\tensor{B}}{\Var N_2}$ is orthogonal to $\Tang{\tensor{A}}{\Var S_{n_1,\ldots,n_d}}$ for all $1\leq i\leq r-2$. Therefore, we get the following equation for \cref{eqB1}:
	\[
	\mathrm{vol}(\begin{bmatrix} V & W_1&\cdots&W_{r-2}\end{bmatrix})
	= \mathrm{vol}(V)\,\mathrm{vol}(\begin{bmatrix} W_1&\cdots&W_{r-2}\end{bmatrix})
	= \mathrm{vol}(\begin{bmatrix} W_1&\cdots&W_{r-2}\end{bmatrix}),
	\]
	the last equality because $V$ has orthonormal columns. Let us further investigate the $W_i$.
	
	By \cref{tangent_space_segre}, the tangent space of $\Var S_{n_1,\ldots,n_d}$ at $\tensor{A}_i=\vect a^1_i \otimes \cdots\otimes \vect a^d_i$ is
	\[
	\Tang{\tensor{A}_i}{\Var{S}_{n_1,\ldots,n_d}} =
	\R^{n_1} \otimes \vect{a}_i^2 \otimes \cdots \otimes \vect{a}_i^d + \cdots +\vect{a}_i^1 \otimes \cdots \otimes \vect{a}_i^{d-1} \otimes \R^{n_d}.
	\]
	Moreover, by \cref{lemmaB2}, $\Var S_{n_1,\ldots,n_d} \cap\Var W$ is a manifold, and its tangent space at $\tensor{A}_i$ is
	\[
	\Tang{\tensor{A}_i}{(\Var S_{n_1,\ldots,n_d} \cap\Var W)} = \Var{A}_1^\perp \otimes \vect{a}_i^2 \otimes \cdots \otimes \vect{a}_i^d + \cdots +\vect{a}_i^1 \otimes \cdots \otimes \vect{a}_i^{d-1} \otimes \Var{A}_d^\perp.
	\]
	For all $1\leq k\leq d$ let $\{\vect{t}^k,\vect{s}^k\}$ be an orthonormal basis of $\Var{A}_k$, and let us write
	\begin{align*}
	\Var K_i&:= \mathrm{span}\{\vect{t}^1\otimes \vect{a}_i^2 \otimes \cdots \otimes \vect{a}_i^d, \ldots, \vect{a}_i^1 \otimes \cdots \otimes \vect{a}_i^{d-1} \otimes \vect{t}^d\}, \text{ and }\\
	\Var L_i&:= \mathrm{span}\{\vect{s}^1\otimes \vect{a}_i^2 \otimes \cdots \otimes \vect{a}_i^d, \ldots, \vect{a}_i^1 \otimes \cdots \otimes \vect{a}_i^{d-1} \otimes \vect{s}^d\}.
	\end{align*}
	Because $\vect{a}_i^k\in\Var A_k^\perp$ for $1\leq i\leq r-2$, and because $\Vert\vect a_i^j\Vert =1$, the tensors listed even form orthogonal bases for $\Var K_i$ and $\Var L_i$, respectively. Furthermore, we have for all pairs of indices $i,j$ that
	\[
	\Tang{\tensor{A}_i}{(\Var S_{n_1,\ldots,n_d} \cap \Var W)}   \perp \Var K_j,\quad \Tang{\tensor{A}_i}{(\Var S_{n_1,\ldots,n_d} \cap \Var W)}   \perp \Var L_j,\quad \text{and}\quad \Var K_i  \perp \Var L_j.
	\]
	Therefore, we have the following orthogonal decomposition:
	\[
	\Tang{\tensor{A}_i}{\Var S_{n_1,\ldots,n_d}} = \Tang{\tensor{A}_i}{(\Var S_{n_1,\ldots,n_d} \cap \Var W)} \oplus  \Var K_i \oplus \Var L_{i}, \text{ for } 1\leq i\leq r-2.
	\]
	The columns of $UX_i$ form an orthonormal basis of $\Tang{\tensor{A}_i}{(\Var{S}_{n_1,\ldots,n_d} \cap \Var{W})}$. Altogether, we have
	{\small\begin{align*}
	&\mathrm{vol}\left(\begin{bmatrix} W_1&\cdots&W_{r-2}\end{bmatrix}\right)\\
	 &= \mathrm{vol}\left(\begin{bmatrix} UX_1&\cdots&UX_{r-2}\end{bmatrix}\right) \prod \mathrm{vol}\left(\begin{bmatrix}\vect{a}_1^{h_1} \otimes \cdots \otimes \vect a_1^{h_{d-1}}& \cdots & \vect{a}_{r-2}^{h_1} \otimes \cdots \otimes \vect a_{r-2}^{h_{d-1}}\end{bmatrix}\right)^2\\
	 &= \mathrm{vol}\left(\begin{bmatrix} X_1&\cdots&X_{r-2}\end{bmatrix}\right) \prod \mathrm{vol}\left(\begin{bmatrix}\vect{x}_1^{h_1} \otimes \cdots \otimes \vect x_1^{h_{d-1}}& \cdots & \vect{x}_{r-2}^{h_1} \otimes \cdots \otimes \vect x_{r-2}^{h_{d-1}}\end{bmatrix}\right)^2,
	\end{align*}}%
	where both products range for $1\leq h_1<\cdots<h_{d-1}\leq d$. This implies that\enlargethispage{\baselineskip}
	\[
	\mathrm{vol}(\begin{bmatrix} W_1&\cdots&W_{r-2}\end{bmatrix}) =2 \mu
	\]
	is \emph{independent of $\tensor{B}$}.
	
	The rest of the proof is a variational argument:
	Let us denote by $\mathrm{Gr}(\Sigma,\Pi)$ the Grassmann manifold of $\Sigma$-dimensional linear spaces in~$\R^\Pi$. We endow $\mathrm{Gr}(\Sigma,\Pi)$ with the standard Riemannian metric, such that the distance between two spaces is the Euclidean length of the vector of \emph{principal angles} \cite{BG1973}. Let us denote this distance by $d(\cdot,\cdot)$.
	Furthermore, let $G: \Var S_{n_1,\ldots,n_d}\to \mathrm{Gr}(\Sigma,\Pi), \tensor{A}\mapsto \mathrm{T}_\tensor{A}\,\Var S_{n_1,\ldots,n_d}$ be the  \emph{Gauss map}.
	
	From \cite[Proposition 4.3]{BV2018c} we get for each $1\leq i\leq r-2$ that $\Vert\deriv{G}{\tensor{A}_{i}}  \Vert \leq \sqrt{\Sigma}$. This means, that for $\epsilon>0$ and any tuple $(\tensor{A}_1',\ldots,\tensor{A}_{r-2}') \in \Var S_{n_1,\ldots,n_d}$ satisfying $\Vert \tensor{A}_i - \tensor{A}_i'\Vert <\epsilon$ for $1\leq i\leq r-2$,
	we have $d(\Tang{\tensor{A}_i}{\Var S_{n_1,\ldots,n_d}}, \Tang{\tensor{A}_i'}{\Var S_{n_1,\ldots,n_d}})<2\epsilon \sqrt{\Sigma}$ for sufficiently small $\epsilon$. Let $W_i'\in \R^{\Pi\times \Sigma}$ be a matrix with orthonormal columns that span $\Tang{\tensor{A}_i'}{\Var S_{n_1,\ldots,n_d}}$. Consequently, there is a constant $K>0$, which depends only on $n_1,\ldots,n_d$, such that
	$\Vert W_i - W_i'\Vert <\epsilon K\sqrt{\Sigma}$.
	
	Recall that the columns of $\left[\begin{smallmatrix} W_1 & \cdots & W_{r-2}\end{smallmatrix}\right]$ are orthogonal to the columns of $V$. Moreover, $r\Sigma \leq \Pi$, since we have assumed that $\sigma_{r;n_1,\ldots,n_d}$ is generically complex identifiable.  Hence, there is a matrix $M\in \R^{\Pi\times \Pi}$ with $MV=V$ and $MW_i=W_i'$ for $1\leq i\leq r-2$ that leaves the orthogonal complement of $V+W_1+\cdots+W_{r-2}$ fixed.
	This implies
		$$ \Vert I_\Pi-M\Vert_2=\Vert (I_\Pi-M)\left[\begin{smallmatrix} W_1 & \cdots & W_{r-2}\end{smallmatrix}\right]\Vert_2 \leq\sum_{i=1}^{r-2}\Vert W_i-W_i'\Vert < \epsilon rK\sqrt{\Sigma},$$
	where $I_\Pi$ is the $\Pi\times \Pi$ identity matrix.
	Therefore, if we choose $\epsilon$ small enough, we may assume $\det M > \tfrac{1}{2}$. Note that such a choice of $\epsilon$ is independent of $\tensor{B}$.

	Altogether, we have shown that for all tuples $(\tensor{A}_1',\ldots,\tensor{A}_{r-2}') \in \Var S_{n_1,\ldots,n_d}$ satisfying $\Vert \tensor{A}_i - \tensor{A}_i'\Vert <\epsilon$ we have that
	\begin{align*}
		\mathrm{Jac}(\phi)(\tensor{B},\tensor{A}_1',\ldots,\tensor{A}_{r-2}')& = \mathrm{vol}(\begin{bmatrix} V & W_1'&\cdots&W_{r-2}'\end{bmatrix})\\
		& = \mathrm{vol}(\begin{bmatrix} MV & MW_1&\cdots& MW_{r-2} \end{bmatrix})\\
		& = \vert \det(M)\vert \, \mathrm{vol}(\begin{bmatrix} V & W_1&\cdots& W_{r-2} \end{bmatrix})\\
		& > \frac{1}{2}\,\mathrm{vol}(\begin{bmatrix} V & W_1&\cdots& W_{r-2} \end{bmatrix})\\
			& = \mu,
	\end{align*}
	and both $\epsilon$ and $\mu$ have been chosen independent of $\tensor{B}$. This concludes the proof.
	\end{proof}

	\section{Proofs of the Lemmata in Section \ref{sec:angular_CN}} \label{app_angular}
	
	\subsection{Proof of Lemma \ref{lem:inner}}\label{proof_lem:inner}
	The proof is similar to the proof of \cref{lem:innerBV}. Integrating in polar coordinates, we have
		\[
		J_\mathrm{inner} = \int_{0}^{\frac{\pi}{2}}\int_0^\infty \rho\, q\left((\Id-MM^\dagger)\begin{bmatrix}\rho \cos(\theta) L_1 & \rho \sin(\theta)L_2\end{bmatrix}\right)\;e^{-\frac{\rho^2\|\cos(\theta)\tensor{U}+\sin(\theta)\tensor{V}\|^2}{2}}\,\d{}\rho \, \d{}\theta.
		\]
	Note that the argument of $q$ is a $\Pi \times (2\Sigma-2)$ matrix. Since $q(A) = \varsigma_1(A) \cdots \varsigma_{n-1}(A)$ for $A \in \R^{m \times n}$ with $n\le m$, we have
	\[
	q\left((\Id-M M^\dagger)\begin{bmatrix} \rho \cos(\theta)L_1 & \rho \sin(\theta)L_2\end{bmatrix}\right)
	= \rho^{2\Sigma-3}\, q\left((\Id-MM^\dagger)\begin{bmatrix} \cos(\theta)L_1 &  \sin(\theta)L_2\end{bmatrix}\right).
	\]
	This yields
	\[
			J_\mathrm{inner}=\int_{0}^{\frac{\pi}{2}} q\left((\Id-MM^\dagger)\begin{bmatrix} \cos(\theta)L_1 &\sin(\theta)L_2\end{bmatrix}\right) \int_0^\infty \rho^{2\Sigma-2} \;e^{-\frac{\rho^2\|\cos(\theta)\tensor{U}+\sin(\theta)\tensor{V}\|^2}{2}}\d{}\rho\d{}\theta.
			\]
		The change of variables $t=\rho\|\cos(\theta)\tensor{U}+\sin(\theta)\tensor{V}\|$ transforms the integral for $\rho$ into
		\[
		\frac{1}{\|\cos(\theta)\tensor{U}+\sin(\theta)\tensor{V}\|^{2\Sigma-1}} \int_0^\infty t^{2\Sigma-2} \;e^{-\frac{t^2}{2}}\d{}t
		= \frac{2^{\frac{2\Sigma-3}{2}}\Gamma\left(\frac{2\Sigma-1}{2}\right)}{\|\cos(\theta)\tensor{U}+\sin(\theta)\tensor{V}\|^{2\Sigma-1}}.
		\]
	Plugging the foregoing into the expression for $J_\text{inner}$ concludes the proof.\qed
	
	\subsection{Proof of Lemma \ref{lem:cotaq}}\label{proof_lem:cotaq}
	
	First, note that the left-hand term in \eqref{eqcotaq} is bounded above by a constant depending on $n_1,\ldots,n_d,d$. Thus, by choosing an appropriate constant $K$ in \eqref{eqcotaq} we can assume that $\|\tensor{U}-\tensor{V}\|$ is smaller than any predefined quantity. We thus assume from now on that $\|\tensor{U}-\tensor{V}\|\leq\epsilon$ for some $\epsilon>0$ that can be chosen as small as desired. {Furthermore, as in \cref{eqn_proof_trigonio}, we write
	\begin{align*}
	\delta_k := \langle \vect{u}^k,\vect{v}^k\rangle = \sqrt{\tfrac{1}{\epsilon_k^2+1}}, \quad
	\epsilon_k := \Vert \vect{u}^k - \vect{v}^k\Vert = \sqrt{\tfrac{1}{\delta_k^2}-1}, \quad\text{and}\quad
	z := \delta_1\cdots \delta_d.
	\end{align*}
	We also assume that $\delta_1 = \min\{\delta_1,\ldots,\delta_d\}$, or, equivalently, $\epsilon_1 = \max\{\epsilon_1,\ldots,\epsilon_d\}$. Note that, if $\epsilon \approx 0$, then {one can assume} $\epsilon_k \approx 0$ and $\delta_k\approx 1$ for all $1\leq k\leq d$. In particular, all inequalities from the proof of \cref{sec3:auxiliary_lemma} in \cref{proof_lem3} are still valid here.}
	
	\subsubsection*{Dropping the scaling}
	For simplifying notation, we abbreviate
	\[
	A = (\Id-MM^\dagger)\begin{bmatrix} L_1 &  L_2\end{bmatrix} \text{ and } B = \mathrm{diag}(\underbrace{\cos(\theta), \ldots, \cos(\theta)}_{(\Sigma -1) \text{ times}}, \underbrace{\sin(\theta), \ldots, \sin(\theta)}_{(\Sigma -1) \text{ times}}),
	\]
	so that $AB = (\Id-MM^\dagger)\begin{bmatrix} \cos(\theta) L_1 & \sin(\theta) L_2\end{bmatrix}$. Observe that, by definition, $A$ ultimately depends on $\tuple{u}$ and $\tuple{v}$; that is, $A=A(\tuple{u},\tuple{v})$.
	
	Recall from \cref{def_q} that $q(\cdot)$ is the product of all but the smallest singular value of its argument. We first show that $q(AB)\leq q(A)$. To see this, let $\varsigma_1 \ge \cdots \ge \varsigma_{2(\Sigma -1)} \ge 0$ be the singular values of $A$. Then,
	\[
	\det((AB)^T (AB)) = \det(BB^T) \det(A^TA) = (\varsigma_1\cdots \varsigma_{2(\Sigma -1)})^2 (\cos(\theta)\sin(\theta))^{2(\Sigma -1)}.
	\]
	This shows
	\[
	q(AB) = \frac{\varsigma_1\cdots \varsigma_{2(\Sigma -1)} \cdot  (\cos(\theta)\sin(\theta))^{\Sigma -1}}{\varsigma_{\min}(AB)},
	\]
	where $\varsigma_{\min}(\cdot)$ denotes the smallest singular value; see \cref{def_spectral_norm}. Next, we have
	\begin{align*}
		 \varsigma_{\min}(AB)
		 = \min_{\vect{x}\in \Sp(\R^{2(\Sigma -1)})}\, \Vert AB \vect{x}\Vert
		 &\geq \min_{\vect{x}\in \Sp(\R^{2(\Sigma -1)})}\, \Vert A \vect{x}\Vert \quad \cdot \min_{\vect{x}\in \Sp(\R^{2(\Sigma -1)})}\, \Vert B \vect{x}\Vert\\
		 &= \varsigma_{2(\Sigma -1)} \cdot \min\{|\cos(\theta)|, |\sin(\theta)|\}.
		 \end{align*}
	Moreover, for $0\leq \theta  \leq \tfrac{\pi}{2}$ we have $0\leq \cos(\theta)\leq 1$ and $0\leq \sin(\theta)\leq 1$. Altogether, this implies $q(AB)\leq \varsigma_1\cdots \varsigma_{2(\Sigma -1)-1} = q(A)$. In the rest of the proof, we bound the $\varsigma_i$'s.
	
	\subsubsection*{Simplifying the matrix by orthogonal transformations}
	Recall from the proof of \cref{sec3:auxiliary_lemma} that applying the orthogonal transformation in \cref{def_R_i} on the right, we have
	\[
	\varsigma_i = \varsigma_i \bigl( (I-MM^\dagger) \begin{bmatrix} L_1 & L_2 \end{bmatrix} \bigr)
	 = \varsigma_i \bigl( (I-MM^\dagger) \begin{bmatrix} R_\uparrow & R_\downarrow \end{bmatrix} \bigr), \quad i=1,\ldots,2(\Sigma-1),
	\]
	where $\varsigma_i(A)$ denotes the $i$th largest singular value of the matrix $A$.
	Recalling \cref{eqn_def_S_and_T}, we have
	\[
	 \varsigma_i = \varsigma_i\bigl( (I-P) \begin{bmatrix} S_\uparrow & S_\downarrow & T_\uparrow & T_\downarrow \end{bmatrix} \bigr),
	\]
	{where $P = M M^\dagger$. Let $\vect{a}_\uparrow$ and $\vect{a}_\downarrow$ be as in \cref{eqn_rotated_UV}. The matrix $M M^\dagger$ projects orthogonally onto the span of $\tensor{U}$ and $\tensor{V}$, which coincides with the span of the orthonormal vectors $\frac{\vect{a}_\uparrow}{\|\vect{a}_\uparrow\|}$ and $\frac{\vect{a}_\downarrow}{\|\vect{a}_\downarrow\|}$. Moreover, by \cref{eqn_innerprods_a_updown} we have $\vect{a}_\downarrow^T \vect{a}_\downarrow = 1+z$ and  $\vect{a}_\uparrow^T \vect{a}_\uparrow = 1-z$. This shows that
	\[
	 P = \frac{1}{1+z} \vect{a}_\downarrow \vect{a}_\downarrow^T + \frac{1}{1-z} \vect{a}_\uparrow \vect{a}_\uparrow^T.
	\]
	}Next, it follows from \cref{eqn_everything} that $ P S_\uparrow =0$ and $ P S_\downarrow =0$, so that $\varsigma_i=\varsigma_i( \widetilde{N} )$, where
	\[
	 \widetilde{N} := \begin{bmatrix} S_\uparrow & S_\downarrow & (I-P)T_\uparrow & (I-P)T_\downarrow \end{bmatrix}.
	\]
	
	\subsubsection*{Computing the Gram matrix}
	Next, we compute the Gram matrix of $\widetilde{N}$. Consider again \cref{eqn_everything}, from which all of the following computations follow.
	We have
	\[
	 \begin{bmatrix}
	S_\uparrow^T S_\uparrow & S_\uparrow^T S_\downarrow \\
	S_\downarrow^T S_\uparrow & S_\downarrow^T S_\downarrow
	 \end{bmatrix} =
	 \begin{bmatrix}
	  F_\uparrow & 0 \\ 0 & F_\downarrow
	 \end{bmatrix},
	\]
	where the $F$'s are {the diagonal matrices} from \cref{eqn_def_EandF}.
	{From the symmetry} of $P$ we obtain
	\(S_\downarrow^T P = 0\) and \(S_\uparrow^T P=0,\)
	so that
	\begin{align*}
	S_\uparrow^T (I - P) T_\uparrow
	&= S_\uparrow^T T_\uparrow - 0 = 0, &
	S_\downarrow^T (I - P) T_\uparrow
	&= S_\downarrow^T T_\uparrow - 0 = 0, \\
	S_\uparrow^T (I - P) T_\downarrow
	&= S_\uparrow^T T_\downarrow - 0 = 0, &
	S_\downarrow^T (I - P) T_\downarrow
	&= S_\downarrow^T T_\downarrow - 0 = 0.
	\end{align*}
	We also find
	\begin{align*}
	 T_\uparrow^T (I - P) T_\downarrow
	 = &T_\uparrow^T T_\downarrow - T_\uparrow^T \left( \frac{1}{1+z} \vect{a}_\downarrow \vect{a}_\downarrow^T + \frac{1}{1-z} \vect{a}_\uparrow \vect{a}_\uparrow^T \right) T_\downarrow
	 \\=& 0 - \left( 0 + \frac{1}{1-z} T_\uparrow^T \vect{a}_\uparrow \vect{a}_\uparrow^T \right) T_\downarrow
	 \\=& 0.
	 \end{align*}
	We observe
	\begin{align*}
	 T_\uparrow^T (I - P) T_\uparrow
	 &= T_\uparrow^T T_\uparrow - T_\uparrow^T \left( \frac{1}{1+z} \vect{a}_\downarrow \vect{a}_\downarrow^T + \frac{1}{1-z} \vect{a}_\uparrow \vect{a}_\uparrow^T \right) T_\uparrow \\
	 &= (E_\uparrow - z \vect{g}\vect{g}^T) - 0 - \frac{1}{1-z} (-z \vect{g})(-z \vect{g})^T \\
	 &= E_\uparrow - \frac{z}{1-z} \vect{g}\vect{g}^T.
	\end{align*}
	Analogously we find
	\[
	 T_\downarrow^T (I - P) T_\downarrow
	 = (E_\downarrow + z\vect{g}\vect{g}^T) - \frac{1}{1+z} (z \vect{g})(z\vect{g})^T
	 = E_\downarrow + \frac{z}{1 + z} \vect{g} \vect{g}^T.
	\]
	Combining all of the foregoing observations, results in
	\[
	 \widetilde{N}^T \widetilde{N} =
	 \begin{bmatrix}
	  F_\uparrow & 0 & 0 & 0 \\
	  0 & F_\downarrow & 0 & 0 \\
	  0 & 0 & E_\uparrow - \frac{z}{1-z} \vect{g}\vect{g}^T & 0 \\
	  0 & 0 & 0 & E_\downarrow + \frac{z}{1+z} \vect{g}\vect{g}^T
	 \end{bmatrix},
	\]
	so that the singular values of $\widetilde{N}$ are given by the square roots of the eigenvalues of the matrices on the block diagonal.
	
	\subsubsection*{Bounding the singular values}
	In the remainder, let $\lambda_i(A)$ denote the $i$th largest eigenvalue of the positive semidefinite matrix $A$. Since $0 < \delta_k \le 1$ for all $1\leq k\leq d$, the eigenvalues $1+z\delta_k^{-1} = 1 + \prod_{j\ne k}\delta_j$ for $1\leq k\leq d,$ of the diagonal matrix $F_\uparrow$ satisfy
	\begin{align} \label{eqn_bound_piece_1}
	\lambda_i( F_\uparrow ) \le 2, \quad i=1,\ldots, \Sigma-d-1.
	\end{align}
	An upper bound for the eigenvalues of $F_\downarrow$ is given by
	\[
	 1 - z\delta_k^{-1} = 1 - \prod_{1 \le j \ne k \le d} \delta_j
	 \le 1 - \delta_1^{d-1}= (1-\delta_1)(1+\delta_1+\cdots+\delta_1^{d-2})\leq d(1-\delta_1),
	\]
	since $0 < \delta_1 \le 1$ for sufficiently small $\epsilon$. Hence, we obtain the bound
	\begin{align} \label{eqn_bound_piece_2}
	 \lambda_i( F_\downarrow ) \le d (1 - \delta_1), \quad i=1,\ldots,\Sigma-d-1.
	\end{align}
	
	The eigenvalues of $E_\downarrow + \frac{z}{1+z} \vect{g}\vect{g}^T$ can be bounded by using that the spectral norm of the rank-1 term is bounded by $\|\vect{g}\|^2 = \sum_{k=1}^d \epsilon_k^2 \le d \epsilon_1^2$. It follows from Weyl's perturbation inequality; see, e.g., \cite[Corollary~7.3.8]{HJ1990}, and the positive semidefiniteness of $E_\downarrow + \frac{z}{1+z} \vect{g}\vect{g}^T$ that
	\[
	 \lambda_i \left( E_\downarrow + \frac{z}{1+z} \vect{g}\vect{g}^T \right) \le \lambda_i ( E_\downarrow ) + d \epsilon_1^2.
	\]
	{From the definition of $\delta_1$ and $\epsilon_1$}, we have
	\begin{align} \label{eqn_epsilon_upper_bound}
	 {\epsilon_1^2 \leq \frac32\|\vect u^1-\vect v^1\|^2} \le 3(1-\delta_1),
	\end{align}
	provided that $\epsilon$ is sufficiently small.
	The eigenvalues of the diagonal matrix $E_\downarrow$ are bounded from above by $C (d+1) \epsilon_1^2$ due to \cref{newboundU}. Putting all of these together and using $d \ge 1$, we find
	\begin{align} \label{eqn_bound_piece_3}
	\lambda_i\left( E_\downarrow + \frac{z}{1+z} \vect{g}\vect{g}^T \right) \le C' d (1 - \delta_1), \quad i = 1,\ldots, d,
	\end{align}
	for some universal constant $C'>0$.
	
	{For computing an upper bound on the eigenvalues of $E_\uparrow - \frac{z}{1-z} \vect{g}\vect{g}^T$, we start by noting that
	\[
	E_\uparrow - \frac{z}{1-z} \vect{g}\vect{g}^T
	= - \frac{z}{1-z} \vect{g}\vect{g}^T + 2 \Id_{d} + \mathrm{diag}\bigl( z(1+\epsilon_1^2)-1, \ldots, z(1+\epsilon_d^2)-1 \bigr);
	\]
	the spectral norm of the last diagonal matrix is bounded by $C (d+1) \epsilon_1^2$ because of \cref{newboundU}. Adding the matrix $2\Id_d$ causes all eigenvalues to be shifted by $2$; hence, it suffices to compute the nonzero eigenvalue of the rank-$1$ matrix. Its eigenvalues are
	\[
	 \underbrace{0, \ldots, 0}_{d-1 \text{ times}}, \frac{-z}{1-z} \|\vect{g}\|^2;
	\]
	the eigenvector corresponding to the last eigenvalue is $\frac{\vect{g}}{\|\vect{g}\|}$. In order to bound that eigenvalue, note that from \eqref{newboundL} and \eqref{newboundU} we have for some constant $c=c(d)>0$:
	\begin{align*}
	-z \frac{\|\vect{g}\|^2}{1 - z}
	&= \frac{\sum_{k=1}^d \epsilon_k^2}{1-\prod_{k=1}^d\sqrt{\epsilon_k^2+1}} \leq -2+c\epsilon_1^2,
	\end{align*}
	where the last inequality is proved as follows. Note that $\sqrt{1+t^2}\leq 1+\frac{t^2}{2}$, which can be verified by squaring the terms and comparing. Then,
	\[
	\frac{\sum_{k=1}^d \epsilon_k^2}{-1+\prod_{k=1}^d\sqrt{\epsilon_k^2+1}} \geq\frac{\sum_{k=1}^d \epsilon_k^2}{-1+\prod_{k=1}^d\left(1+\frac{\epsilon_k^2}{2}\right)} =\frac{\sum_{k=1}^d \epsilon_k^2}{\frac12\sum_{k=1}^d \epsilon_k^2+p},
	\]
	where $p=p(\epsilon_1,\ldots,\epsilon_d)$ is a polynomial expression in the $\epsilon_i$ of degree and coefficients bounded by a constant depending only on $d$, with all its monomials of degree at least $4$ in $\epsilon_1,\ldots,\epsilon_d$. Hence, for some constant $c=c(d)$ we have $|p|\leq c(d)\epsilon_1^4$ and we conclude that
	\[
	\frac{\sum_{k=1}^d \epsilon_k^2}{-1+\prod_{k=1}^d\sqrt{\epsilon_k^2+1}} \geq\frac{2}{1+\frac{2c(d)\epsilon_1^4}{\sum_{k=1}^d \epsilon_k^2}}\geq 2-\frac{4c(d)\epsilon_1^4}{\sum_{k=1}^d \epsilon_k^2}\geq2-\hat c(d)\epsilon_1^2,
	\]
	for some new constant $\hat{c}(d)$; the last step follows from \cref{eqn_proof_epsilonbound}.
	
	Putting all of the foregoing together with \cref{eqn_epsilon_upper_bound}, we have thus shown that $E_\uparrow - \frac{z}{1-z} \vect{g}\vect{g}^T$ has eigenvalues satisfying
	\begin{align} \label{eqn_bound_piece_4}
	\lambda_d\left( E_\uparrow - \frac{z}{1-z} \vect{g}\vect{g}^T \right) &\le c'(d)(1 - \delta_1), \text{ and }\\ \nonumber
	\lambda_i\left( E_\uparrow - \frac{z}{1-z} \vect{g}\vect{g}^T \right) &\le 2 + c'(d) (1-\delta_1), \quad i=1,\ldots,d-1,
	\end{align}}
	where $c'(d)$ is again a constant depending only on $d$.
	
	In \cref{eqn_bound_piece_1,eqn_bound_piece_2,eqn_bound_piece_3,eqn_bound_piece_4}, we have shown that precisely $\Sigma-d-1 + d + 1 = \Sigma$ eigenvalues are smaller than some constant times $1 - \delta_1$, while the remaining eigenvalues are clustered near~$2$. It follows that there exists a constant $K$, depending only on $n_1, \ldots, n_d$ and~$d$, such that
	\begin{align*}
	 q\left( (I-MM^\dagger) \begin{bmatrix} \cos(\theta) L_1 & \sin(\theta) L_2 \end{bmatrix} \right)
	 &\le q\left( (I-MM^\dagger) \begin{bmatrix} L_1 & L_2 \end{bmatrix} \right)
	 \le K (1 - \delta_1)^{\frac{\Sigma - 1}{2}} \\&
	 = K \left( \frac{\|\vect{u}^1 - \vect{v}^1\|}{\sqrt2} \right)^{\Sigma-1}
	 \le K \left( \frac{\|\tensor{U} - \tensor{V}\|}{\sqrt2} \right)^{\Sigma-1},
	\end{align*}
	where the last step is by \cref{bound_tensor_norm}. This finishes the proof.
	\qed

	\subsection{Proof of Lemma \ref{lem:integralauxiliar}}\label{proof_lem:integralauxiliar}
	Let $J$ be the integral in question; i.e.,
	\[
	J = \int_{0}^{\frac{\pi}{2}} \frac{1}{\Vert\cos(\theta)\vect{x} - \sin(\theta)\vect{y}\Vert^{a}}\, \d{}\theta.
	\]
	Writing  $\Vert\cos(\theta)\vect{x} - \sin(\theta)\vect{y}\Vert = \sqrt{1-\sin(2\theta) \langle \vect{x},\vect{y}\rangle}$ and exploiting the symmetry of $\sin(\theta)$ around~$\tfrac{\pi}{4}$ we have
			\[
		J=2\int_{0}^{\frac{\pi}{4}}\frac{1}{\sqrt{1-\sin(2\theta) \langle \vect{x},\vect{y}\rangle}^{\,a}} \d{}\theta \leq  \int_0^1\frac{1}{\sqrt{1-(1-t)\langle \vect{x},\vect{y}\rangle}^{\,a}\sqrt{t}} \,\d{}t;
			\]
	the inequality is due to the change of variables $\sin(2\theta)=1-t$ and $\sqrt{2t-t^2}\geq\sqrt{t}$ for $|t|\leq1$. Let us write $h:= \langle \vect{x},\vect{y}\rangle$. We distinguish between two cases. In the case $h\leq \tfrac{1}{2}$, we can bound
	\[
	J\leq \sqrt{2}^{\,a} \int_0^1 t^{-\frac{1}{2}} \,\d{} t = \sqrt{2}^{\,a+2}  \leq \frac{\sqrt{2}^{\,3a}}{\sqrt{2}^{\,a-1} \sqrt{1-h}^{\, a-1}}=\frac{\sqrt{2}^{\,3a+1}}{\Vert \vect{x} - \vect{y}\Vert^{\, a-1}}.
	\]
	The second case is $h>\tfrac{1}{2}$:
	A new change of variables $t=\tfrac{1-h}{h} \,u$ yields
	{		\begin{align*}
			J \leq& \sqrt{\frac{1-h}{h}} \int_{0}^{\frac{h}{1-h}}\frac{1}{\sqrt{1-h+(1-h)u}^{\,a}\sqrt{u}}\,\d{}u \\=&  \frac{1}{\sqrt{h}} \, \frac{1}{\sqrt{1-h}^{\,a-1}} \int_{0}^{\frac{h}{1-h}}\frac{1}{\sqrt{1+u}^{\,a}\sqrt{u}}\,\d{}u \\
			\leq & \frac{1}{\sqrt{h}} \, \frac{1}{\sqrt{1-h}^{\,a-1}} \left(\int_{0}^{1}\frac{1}{\sqrt{1+u} \sqrt{u}}\,\d{}u +\int_{1}^{\infty}\frac{1}{u^{(a+1)/2}}\,\d{}u \right).
			\end{align*}
	The last integrals add to at most $2+2(a-1)^{-1}$, and we thus have proved for $h>\tfrac{1}{2}$:
			\[
			J\leq \frac{2(1+(a-1)^{-1})}{\sqrt{h}\sqrt{1-h}^{\,a-1}}
			\leq \frac{\sqrt{2}^{\,3}(1+(a-1)^{-1})}{\sqrt{1-\langle \vect{x},\vect{y}\rangle}^{\, a-1}}
			=\frac{\sqrt{2}^{\,a+2}(1+(a-1)^{-1})}{\|\vect{x}-\vect{y}\|^{a-1}}.
			\]
			The lemma is proved.\qed}
	
	\subsection{Proof of Lemma \ref{lem:cosenos}}\label{proof_lem:cosenos}
	We prove the lemma by induction. The first case $d=1$ reads $\cos(\theta_1) \leq 1-\tfrac{\theta_1^2}{7}$. In fact, in this case we have the stronger inequality $\cos(\theta) \leq 1-\tfrac{\theta^2}{4}$ as the following argument shows: Consider the map $f:[0,\tfrac{\pi}{2}] \to \R, \theta \mapsto 1 - \tfrac{\theta^2}{4} - \cos(\theta)$. We have $f(0)=0$ and $f(\tfrac{\pi}{2}) = 1 - \tfrac{\pi^2}{16}>0$. Moreover, $f'(\theta)=\sin(\theta)- \tfrac{\theta}{2}>0$ for $0\leq \theta \leq \tfrac{\pi}{2}$. This implies that we have $f\geq 0$ proving the case $d=1$. For general~$d$, note that by the induction hypothesis
				\begin{align*}
				\cos (\theta_1)\cdots\cos(\theta_d)&=\cos (\theta_1)\cdots\cos(\theta_{d-1})\cos(\theta_d)\\ &\leq\left(1-\frac{\theta_1^2+\cdots+\theta_{d-1}^2}{7(d-1)}\right)\cos(\theta_d)\\ &\leq\left(1-\frac{\theta_1^2+\cdots+\theta_{d-1}^2}{7d}\right)\left(1-\frac{\theta_d^2}{4}\right)\\
				&=1-\frac{\theta_1^2+\cdots+\theta_d^2}{7d}+\frac{\theta_1^2+\cdots+\theta_{d-1}^2-7d+4}{28d}\,\theta_d^2.
			\end{align*}
				Since $\theta_1^2+\cdots+\theta_d^2-7d+4\leq d\,\tfrac{\pi^2}{4}-7d+4\leq0$ for $d\geq2$, we conclude the proof of the lemma.\qed

\bibliographystyle{amsplain}
\bibliography{ECN}

\providecommand{\bysame}{\leavevmode\hbox to3em{\hrulefill}\thinspace}
\providecommand{\MR}{\relax\ifhmode\unskip\space\fi MR }
\providecommand{\MRhref}[2]{%
  \href{http://www.ams.org/mathscinet-getitem?mr=#1}{#2}
}
\providecommand{\href}[2]{#2}
\begin{thebibliography}{10}

\bibitem{AMR2009}
E.~S. Allman, C.~Matias, and J.~A. Rhodes, \emph{Identifiability of parameters
  in latent structure models with many observed variables}, Ann. Statist.
  \textbf{37} (2009), no.~6{A}, 3099--3132.

\bibitem{Dennis}
D.~Amelunxen and P.~B\"{u}rgisser, \emph{Probabilistic analysis of the
  {G}rassmann condition number}, Found. Comput. Math. \textbf{15} (2015),
  no.~1, 3--51.

\bibitem{AL2017}
D.~Amelunxen and M.~Lotz, \emph{Average-case complexity without the black
  swans}, J. Complexity \textbf{41} (2017), 82--101.

\bibitem{AGHKT2014}
A.~Anandkumar, R.~Ge, D.~Hsu, S.~M. Kakade, and M.~Telgarsky, \emph{Tensor
  decompositions for learning latent variable models}, J. Mach. Learn. Res.
  \textbf{15} (2014), 2773--2832.

\bibitem{ABC2017}
E.~Angelini, C.~Bocci, and L.~Chiantini, \emph{Real identifiability vs. complex
  identifiability}, Linear Multilinear Algebra \textbf{66} (2017), 1257--1267.

\bibitem{AB2019}
D.~Armentano and C.~Beltr\'{a}n, \emph{The polynomial eigenvalue problem is
  well conditioned for random inputs}, SIAM J. Matrix Anal. Appl. \textbf{40}
  (2019), no.~1, 175--193.

\bibitem{ArmentanoCucker}
D.~Armentano and F.~Cucker, \emph{A randomized homotopy for the {H}ermitian
  eigenpair problem}, Found. Comput. Math. \textbf{15} (2015), no.~1, 281--312.

\bibitem{BBV1}
C.~{Beltr{\'a}n}, P.~{Breiding}, and N.~{Vannieuwenhoven}, \emph{{Pencil-based
  algorithms for tensor rank decomposition are not stable}}, SIAM J. Matrix
  Anal. Appl. \textbf{40} (2019), no.~2, 739--773.

\bibitem{Khazh}
C.~Beltr\'{a}n and K.~Kozhasov, \emph{The real polynomial eigenvalue problem is
  well conditioned on the average}, Found. Comput. Math. \textbf{20} (2020),
  no.~2, 291--309.

\bibitem{BMOC}
C.~Beltr\'an, J.~Marzo, and J.~Ortega-Cerd\`a, \emph{Energy and discrepancy of
  rotationally invariant determinantal point processes in high dimensional
  spheres}, J. Complexity \textbf{37} (2016), 76--109.

\bibitem{FLH}
C.~Beltr\'{a}n and L.~M. Pardo, \emph{Fast linear homotopy to find approximate
  zeros of polynomial systems}, Found. Comput. Math. \textbf{11} (2011), no.~1,
  95--129.

\bibitem{BenedettiRisler}
R.~Benedetti and J.-J. Risler, \emph{Real algebraic and semi-algebraic sets},
  Actualit\'{e}s Math\'{e}matiques. [Current Mathematical Topics], Hermann,
  Paris, 1990.

\bibitem{BF2011}
G.~Bergqvist and P.~J. Forrester, \emph{Rank probabilities for real random {$N
  \times N \times 2$} tensors}, Elect. Comm. in Probab. \textbf{16} (2011),
  630--637.

\bibitem{BG1973}
\AA{}. Bj\"orck and G.~H. Golub, \emph{Numerical methods for computing angles
  between linear subspaces}, Math. Comp. \textbf{27} (1973), no.~123, 579--594.

\bibitem{BT2015}
G.~Blekherman and Z.~Teitler, \emph{On maximum, typical and generic ranks},
  Math. Ann. \textbf{362} (2015), 1021--1031.

\bibitem{BCSS1998}
L.~Blum, F.~Cucker, M.~Shub, and S.~Smale, \emph{{Complexity and Real
  Computation}}, Springer--Verlag, New York, 1998.

\bibitem{BCO2014}
C.~Bocci, L.~Chiantini, and G.~Ottaviani, \emph{Refined methods for the
  identifiability of tensors}, Ann. Mat. Pura Appl. 4 \textbf{193} (2014),
  1691--1702.

\bibitem{BM2019}
P.~Breiding and O.~Marigliano, \emph{Random points on an algebraic manifold},
  SIAM J. Math. Data Sci. \textbf{2} (2020), no.~3, 683--704.

\bibitem{BT2017}
P.~Breiding and S.~Timme, \emph{{HomotopyContinuation.jl: A} package for
  homotopy continuation in {Julia}}, Mathematical Software -- ICMS 2018.
  Lecture Notes in Computer Science. Software available at {\tt
  www.juliahomotopycontinuation.org} (2018).

\bibitem{BV2017}
P.~Breiding and N.~Vannieuwenhoven, \emph{The condition number of join
  decompositions}, SIAM J. Matrix Anal. Appl. \textbf{39} (2018), no.~1,
  287--309.

\bibitem{BV2018}
\bysame, \emph{Convergence analysis of {Riemannian Gauss-Newton} methods and
  its connection with the geometric condition number}, Appl. Math. Letters
  \textbf{78} (2018), 42--50.

\bibitem{BV2018b}
\bysame, \emph{A {Riemannian} trust region method for the canonical tensor rank
  approximation problem}, SIAM J. Optim. \textbf{28} (2018), 2435--2465.

\bibitem{BV2018c}
\bysame, \emph{On the average condition number of tensor rank decompositions},
  IMA J. Numer. Anal. \textbf{40} (2020), no.~3, 1908--1936.

\bibitem{BV2021}
\bysame, \emph{The condition number of {Riemannian} approximation problems},
  SIAM J. Optim. \textbf{31} (2021), no.~1, 1049--1077.

\bibitem{BL2013}
J.M. Buczynski, J.~Landsberg, \emph{Ranks of tensors and a generalization of
  secant varieties}, Linear Algebra Appl. \textbf{15} (2013), 668--689.

\bibitem{BCS1997}
P.~B\"urgisser, M.~Clausen, and M.~A. Shokrollahi, \emph{{Algebraic Complexity
  Theory}}, Grundlehren der mathematischen Wissenshaften, vol. 315, Springer,
  Berlin, Germany, 1997.

\bibitem{BuCu2010}
P.~B\"{u}rgisser and F.~Cucker, \emph{Smoothed analysis of {M}oore-{P}enrose
  inversion}, SIAM J. Matrix Anal. Appl. \textbf{31} (2010), no.~5, 2769--2783.

\bibitem{BC2011}
P.~B\"urgisser and F.~Cucker, \emph{On a problem posed by {S}teve {S}male},
  Ann. Math. \textbf{174} (2011), 1785--1836.

\bibitem{BC2013}
\bysame, \emph{{Condition: The Geometry of Numerical Algorithms}}, Grundlehren
  der mathematischen Wissenschaften, vol. 349, Springer--Verlag, 2013.

\bibitem{Pardo}
D.~Castro, J.~L. Monta\~na, L.~M. Pardo, and J.~San~Martín, \emph{The
  distribution of condition numbers of rational data of bounded bit length},
  Found. Comput. Math. \textbf{2} (2002), 1--52.

\bibitem{ChenDongarra}
Z.~Chen and J.~J. Dongarra, \emph{Condition numbers of {G}aussian random
  matrices}, SIAM J. Matrix Anal. Appl. \textbf{27} (2005), no.~3, 603--620.

\bibitem{CO2012}
L.~Chiantini and G.~Ottaviani, \emph{On generic identifiability of $3$-tensors
  of small rank}, SIAM J. Matrix Anal. Appl. \textbf{33} (2012), no.~3,
  1018--1037.

\bibitem{COV2014}
L.~Chiantini, G.~Ottaviani, and N.~Vannieuwenhoven, \emph{An algorithm for
  generic and low-rank specific identifiability of complex tensors}, SIAM J.
  Matrix Anal. Appl. \textbf{35} (2014), no.~4, 1265--1287.

\bibitem{COV2017}
\bysame, \emph{Effective criteria for specific identifiability of tensors and
  forms}, SIAM J. Matrix Anal. Appl. \textbf{38} (2017), no.~2, 656--681.

\bibitem{Comon1994}
P.~Comon, \emph{Independent component analysis, a new concept?}, Signal Proc.
  \textbf{36} (1994), no.~3, 287--314.

\bibitem{CJ2010}
P.~Comon and C.~Jutten, \emph{{Handbook of Blind Source Separation: Independent
  Component Analysis and Applications}}, Elsevier, 2010.

\bibitem{dSL2008}
V.~{de Silva} and L.-H. Lim, \emph{Tensor rank and the ill-posedness of the
  best low-rank approximation problem}, SIAM J. Matrix Anal. Appl. \textbf{30}
  (2008), no.~3, 1084--1127.

\bibitem{Demmel0}
J.~Demmel, B.~Diament, and G.~Malajovich, \emph{On the complexity of computing
  error bounds}, Found. Comput. Math. \textbf{1} (2001), 101--125.

\bibitem{Demmel}
J.~W. Demmel, \emph{The probability that a numerical analysis problem is
  difficult}, Math. Comp. \textbf{50} (1988), 449--480.

\bibitem{DDL2015}
I.~Domanov and L.~{De Lathauwer}, \emph{Generic uniqueness conditions for the
  canonical polyadic decomposition and {INDSCAL}}, SIAM J. Matrix Anal. Appl.
  \textbf{36} (2015), no.~4, 1567--1589.

\bibitem{ES2005}
A.~Edelman and B.~D. Sutton, \emph{Tails of condition number distributions},
  SIAM J. Matrix Anal. Appl. \textbf{27} (2005), 547--560.

\bibitem{Rojas}
A~A. Erg\"{u}r, G.~Paouris, and J.~M. Rojas, \emph{Probabilistic condition
  number estimates for real polynomial systems {I}: {A} broader family of
  distributions}, Found. Comput. Math. \textbf{19} (2019), no.~1, 131--157.

\bibitem{Gerschgorin1931}
S.~Ger\v{s}gorin, \emph{{\"Uber die Abgrenzung der Eigenwerte einer Matrix}},
  Bulletin de l'Acad\'emie des Sciences de l'URSS. Classe des sciences
  math\'ematiques et na (1931), no.~6, 749--754.

\bibitem{Greub1978}
W.~H. Greub, \emph{{Multilinear Algebra}}, Springer--Verlag, 1978.

\bibitem{Hackbusch2012}
W.~Hackbusch, \emph{{Tensor Spaces and Numerical Tensor Calculus}}, Springer
  Series in Computational Mathematics, vol.~42, Springer--Verlag, 2012.

\bibitem{Harris1992}
J.~Harris, \emph{{Algebraic Geometry, A First Course}}, Graduate Text in
  Mathematics, vol. 133, Springer--Verlag, 1992.

\bibitem{HOOS2016}
J.~Hauenstein, L.~Oeding, G.~Ottaviani, and A.~Sommese, \emph{Homotopy
  techniques for tensor decomposition and perfect identifiability}, J. Reine
  Angew. Math. (2016).

\bibitem{HS2012}
J.~D. Hauenstein and F.~Sottile, \emph{Algorithm 921: {alphaCertified:
  Certifying} solutions to polynomial systems}, ACM Trans. Math. Softw.
  \textbf{38} (2012), no.~28, 20.

\bibitem{Conic}
R.~Hauser and T.~Müller, \emph{Conditioning of random conic systems under a
  general family of input distributions}, Found. Comput. Math. \textbf{9}
  (2009), 335--358.

\bibitem{hitchcock}
F.~L. Hitchcock, \emph{The expression of a tensor or a polyadic as a sum of
  products}, J. Math. Phys. \textbf{6} (1927), 164--189.

\bibitem{HJ1990}
R.A. Horn and C.R. Johnson, \emph{{Matrix Analysis}}, 2 ed., Cambridge
  University Press, New York, NY, USA, 1990.

\bibitem{howard}
R.~Howard, \emph{The kinematic formula in {R}iemannian homogeneous spaces},
  Mem. Amer. Math. Soc. \textbf{106} (1993), no.~509, vi+69.

\bibitem{Hastad1990}
J.~H\r{a}stad, \emph{Tensor rank is {NP}-complete}, J. Algorithms \textbf{11}
  (1990), no.~4, 644--654.

\bibitem{Kroonenberg2008}
P.~M. Kroonenberg, \emph{{Applied Multiway Data Analysis}}, Wiley series in
  probability and statistics, John Wiley \& Sons, Hoboken, New Jersey, 2008.

\bibitem{Kruskal1977}
J.~B. Kruskal, \emph{Three-way arrays: rank and uniqueness of trilinear
  decompositions, with application to arithmetic complexity and statistics},
  Linear Algebra Appl. \textbf{18} (1977), 95--138.

\bibitem{lairez}
P.~Lairez, \emph{A deterministic algorithm to compute approximate roots of
  polynomial systems in polynomial average time}, Found. Comput. Math.
  \textbf{17} (2017), 1265--1292.

\bibitem{Landsberg2012}
J.~M. Landsberg, \emph{{Tensors: Geometry and Applications}}, Graduate Studies
  in Mathematics, vol. 128, AMS, Providence, Rhode Island, 2012.

\bibitem{Lee2013}
J.~M. Lee, \emph{{Introduction to Smooth Manifolds}}, second ed., Graduate
  Texts in Mathematics, vol. 218, Springer--Verlag, New York, USA, 2013.

\bibitem{M1987}
P.~McCullagh, \emph{{Tensor Methods in Statistics}}, Monographs on statistics
  and applied probability, Chapman and Hall, New York, 1987.

\bibitem{QCL2016}
Y.~Qi, P.~Comon, and L.-H. Lim, \emph{Semialgebraic geometry of nonnegative
  tensor rank}, SIAM J. Matrix Anal. Appl. \textbf{37} (2016), 1556--1580.

\bibitem{Rice1966}
J.~R. Rice, \emph{A theory of condition}, SIAM J. Numer. Anal. \textbf{3}
  (1966), no.~2, 287--310.

\bibitem{BezII}
M.~Shub and S.~Smale, \emph{Complexity of {B}ezout's theorem. {II}. {V}olumes
  and probabilities}, Computational algebraic geometry ({N}ice, 1992), Progr.
  Math., vol. 109, Birkh\"{a}user Boston, Boston, MA, 1993, pp.~267--285.

\bibitem{SS94}
\bysame, \emph{Complexity of {B}ezout's theorem {V}: polynomial time}, Theor.
  Comput. Sci. \textbf{133} (1994), 141--164.

\bibitem{Review2016}
N.~D. Sidiropoulos, L.~{De Lathauwer}, X.~Fu, K.~Huang, E.~E. Papalexakis, and
  Ch. Faloutsos, \emph{Tensor decomposition for signal processing and machine
  learning}, IEEE Trans. Signal Process. \textbf{65} (2017), no.~13,
  3551--3582.

\bibitem{SGB2000}
N.~D. Sidiropoulos, G.~B. Giannakis, and R.~Bro, \emph{{Blind PARAFAC receivers
  for DS-CDMA systems}}, IEEE Trans. Signal Process. \textbf{48} (2000),
  810--823.

\bibitem{Smale1981}
S.~Smale, \emph{The fundamental theorem of algebra and complexity theory},
  Bull. Amer. Math. Soc. \textbf{4} (1981), 1--36.

\bibitem{SBG2004}
A.~Smilde, R.~Bro, and P.~Geladi, \emph{{Multi-way Analysis: Applications in
  the Chemical Sciences}}, John Wiley \& Sons, Hoboken, New Jersey, 2004.

\bibitem{SpielmanTeng}
D.~A. Spielman and S.-H. Teng, \emph{Smoothed analysis of termination of linear
  programming algorithms}, vol.~97, 2003, ISMP, 2003 (Copenhagen),
  pp.~375--404.

\bibitem{numerical_linear_algebra}
L.~N. Trefethen and D.~Bau, \emph{Numerical linear algebra}, SIAM, 1997.

\bibitem{whitney}
H.~Whitney, \emph{Elementary structure of real algebraic varieties.}, Ann.
  Math. \textbf{66} (1957), no.~3.

\end{thebibliography}

\end{document}